\theoremstyle{plain}
\newtheorem{theorem}{Theorem}
\newtheorem{lemma}{Lemma}
\theoremstyle{definition}
\newtheorem{definition}{Definition}
\newtheorem{example}{Example}
\newtheorem{proposition}{Proposition}
\newtheorem{corollary}{Corollary}
\newtheorem{conjecture}{Conjecture}
\title{Characterization of the OU matrix of a braid diagram}
\author{Ayaka Shimizu\thanks{Osaka Central Advanced Mathematical Institute, Osaka Metropolitan University, Sugimoto, Osaka, 558-8585, Japan. Email: shimizu1984@gmail.com}
and Yoshiro Yaguchi\thanks{Maebashi Institute of Technology, Maebashi, Gunma, 371-0816, Japan. Email: y.yaguchi@maebashi-it.ac.jp}}
\date{\today}
\begin{document}

\maketitle

\begin{abstract}
The OU matrix of a braid diagram is a square matrix that represents the number of over/under crossings of each pair of strands. 
In this paper, the OU matrix of a pure braid diagram is characterized for up to 5 strands. 
As an application, the crossing matrix of a positive pure braid is also characterized for up to 5 strands. 
Moreover, a standard form of the OU matrix is given and characterized for general braids of up to 5 strands. 
\end{abstract}

\section{Introduction}
\label{section-intro}

An {\it $n$-braid} is an object in $\mathbb{R}^3$ consisting of $n$ strands whose endpoints are fixed at horizontal bars at the top and bottom, where each strand runs from top to bottom without returning (\cite{Artin}). 
We say that two $n$-braids are {\it equivalent} (or {\it same}) if they are sent to each other by an ambient isotopy of ${\mathbb R}^3$ fixing the top and bottom bars pointwise. 
In this paper, we call the equivalence class of a braid simply a braid. 
An {\it $n$-braid diagram} is a regular projection of an $n$-braid with horizontal bars in which each intersection point has the over/under information as shown in Figure \ref{fig-5-braid}. 
\begin{figure}[ht]
\centering
\includegraphics[width=3.5cm]{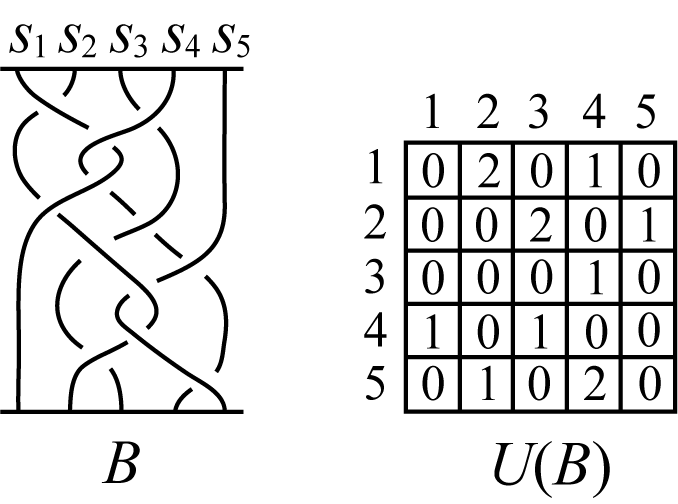}
\caption{A 5-braid diagram $B$ and the OU matrix $U(B)$.}
\label{fig-5-braid}
\end{figure}

Let $s_1, s_2, \dots , s_n$ be the strands of an $n$-braid diagram $B$  with subscripts from the left-hand side to the right-hand side on the top of the braid diagram as shown in Figure \ref{fig-5-braid}. 
The {\it OU matrix}, $U(B)$, of $B$ is an $n \times n$ matrix with zero diagonal such that the $(i,j)$ entry is the number of crossings between $s_i$ and $s_j$ where $s_i$ is over $s_j$ as shown in Figure \ref{fig-5-braid}. 
(For more precise definition, see \cite{AY} or Section \ref{section-preliminaries}.)
The OU matrix was introduced in \cite{AY} to discuss the ``warping degree'' of a braid diagram (\cite{ASA}), which estimates the unknotting number of the closure of a braid diagram (see \cite{Ka, Al} for the warping degree of a link diagram). 
It was also found that the determinant of the OU matrix acts effectively on layered braid diagrams (\cite{AY}). 
In this paper, we characterize the OU matrix of a pure braid diagram of up to $5 \times 5$. \par 

We call a matrix whose entries are all integers (resp. non-negative integers) an {\it integer matrix} (resp. {\it non-negative integer matrix}). 
We call a matrix whose entries are all non-negative even integers a {\it non-negative even matrix}. 
We say that a square matrix $M$ is a {\it zero-diagonal matrix} when $M$ has zero diagonal, namely all the $(i,i)$ entries on the main diagonal are zero. 
We denote the $(i,j)$ entry of a matrix $M$ by $M(i,j)$. 
The following properties of $n \times n$ zero-diagonal matrices, T0 and T1, are introduced in \cite{Bu}.

\begin{definition}[\cite{Bu}]
A zero-diagonal matrix $M$ is said to be T0 if whenever $1 \leq i < j < k \leq n$, then $M(i,j)=M(j,k)=0$ implies $M(i,k)=0$. A zero-diagonal matrix $M$ is said to be T1 if whenever $1 \leq i < j < k \leq n$, then $M(i,j),~M(j,k) \neq 0$ implies $M(i,k) \neq 0$. 
\end{definition}

\noindent For example, the matrices 
\begin{align*}
A=
\begin{pmatrix}
0 & 0 & 1 & 0 \\
0 & 0 & 0 & 1 \\
1 & 0 & 0 & 1 \\
0 & 1 & 1 & 0 
\end{pmatrix} 
, \ 
B=
\begin{pmatrix}
0 & 0 & 1 & 2 \\
0 & 0 & 1 & 0 \\
1 & 1 & 0 & 1 \\
2 & 0 & 1 & 0 
\end{pmatrix} 
\end{align*}
are not T0 since they do not meet the condition with $A(1,2)=A(2,3)=0$ and $A(1,3)=1$, $B(1,2)=B(2,4)=0$ and $B(1,4)=2$. 
In this paper, when we say that a matrix $M$ is T0, it means that $M$ is a T0 zero-diagonal matrix. 
Let $M^T$ denote the transpose of a matrix $M$. 
In this paper, we prove the following theorem. 

\begin{theorem}
When $n \leq 5$, an $n \times n$ matrix $M$ is the OU matrix of some pure $n$-braid diagram if and only if $M+M^T$ is a non-negative even T0 matrix. 
\label{thm-OU5}
\end{theorem}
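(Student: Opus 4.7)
The plan is to prove the necessity and sufficiency directions separately.

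\textbf{Necessity.} For any pure braid diagram $B$ on $n$ strands, the $(i,j)$ entry of $U(B)+U(B)^T$ equals the total number of crossings between strands $s_i$ and $s_j$, and is therefore a non-negative integer. Because $B$ is pure, the induced permutation is the identity, so each pair of strands crosses an even number of times (their relative left/right order is the same at the top and the bottom). The T0 property follows from a topological observation: if $s_i$ and $s_j$ never cross, the planar projections of $s_i,s_j$ are disjoint monotone curves, so $s_j$ stays strictly to the right of $s_i$ at every height; likewise if $s_j$ never crosses $s_k$ (with $j<k$), then $s_j$ stays strictly to the left of $s_k$. Together, for $i<j<k$, strand $s_j$ lies strictly between $s_i$ and $s_k$ at every height, so $s_i$ and $s_k$ cannot cross. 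This direction works uniformly in $n$.

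\textbf{Sufficiency.} Given $M$ with $S:=M+M^T$ non-negative, even, and T0, I would construct a pure $n$-braid diagram whose OU matrix is exactly $M$ by concatenating elementary building blocks. The first step is to establish a library of \emph{elementary pair contributions}: for each pair $i<j$ and each $(p,q)$ of non-negative integers with $p+q$ even, a pure braid diagram whose OU matrix is $p$ at entry $(i,j)$, $q$ at entry $(j,i)$, and zero elsewhere. For adjacent strands the blocks are $\sigma_i^2$, contributing $(1,1)$ to the pair $(i,i+1)$, together with the ``cancel pair'' diagrams $\sigma_i\sigma_i^{-1}$ and $\sigma_i^{-1}\sigma_i$, which are braid-isotopic to the identity yet realize OU contributions $(2,0)$ and $(0,2)$ respectively. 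For non-adjacent pairs $i<j$, I would use a conjugation-type word: apply $\beta:=\sigma_i\sigma_{i+1}\cdots\sigma_{j-1}$ to bring the strand originally at position $i$ adjacent to the one originally at position $j$, perform an adjacent contribution, then retract via $\beta^{-1}$. Careful bookkeeping of the over/under contributions on the forward and backward passes shows that the intermediate-pair contributions cancel, while the desired $(i,j)$ contribution is preserved. Concatenating these elementary words, one per nonzero entry of $S$, produces a pure braid diagram with OU matrix equal to $M$.

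The main obstacle will be the non-adjacent case, especially when the T0 structure forces certain intermediate entries of $S$ to vanish. For instance, if $S(1,4)>0$ but $S(2,4)=S(3,4)=0$, then by T0 the entries $S(1,2)$ and $S(1,3)$ must be positive, and the routing used to realize the $(1,4)$ contribution must not introduce spurious crossings between strands $2,3$ and strand $4$. Handling such situations requires a modest case analysis on the support graph of $S$; since the T0 condition restricts the admissible support patterns considerably, for $n\leq 5$ the number of cases is small enough to enumerate explicitly and treat by direct construction, which is essentially the content of the theorem. A uniform construction valid for all $n$ would require a genuinely new idea, which is presumably why the result is stated only up to $n=5$.
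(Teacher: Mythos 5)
Your necessity argument is sound and is essentially the paper's (the paper proves the contrapositive of T0 via an innermost-crossing disc argument in Proposition \ref{prop-adequate}; your separation argument for disjoint monotone strands is an equivalent direct version, and evenness from purity matches Proposition \ref{prop-even-pure}).

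The sufficiency direction has a genuine gap, and it is not merely the deferred case analysis: the proposed library of elementary pair contributions cannot exist for non-adjacent pairs, as your own necessity argument shows. A pure braid diagram whose only crossings are between $s_i$ and $s_j$ with $j-i\geq 2$ would have a CN matrix violating T0 (take the triple $i<i+1<j$: the $(i,i+1)$ and $(i+1,j)$ entries vanish while the $(i,j)$ entry does not). The conjugation trick does not produce such a block either, because the cancellation of the intermediate-pair contributions on the forward and backward passes is a property of the \emph{algebraic} crossing matrix $C$, not of the OU matrix $U$, which counts crossings without signs: the crossings between $s_i$ and $s_{i+1},\dots,s_{j-1}$ created by $\beta$ and by $\beta^{-1}$ add up in $U$ and in $N=U+U^T$, they never cancel. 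Consequently the whole decomposition strategy --- one concatenated block per nonzero entry of $S$ --- is unworkable, and the ``modest case analysis'' you appeal to is in fact the entire content of the hard direction. The paper proceeds differently: it reduces OU-realizability of $M$ to CN-realizability of the symmetric matrix $M+M^T$ by crossing changes (Corollary \ref{cor-oucn}), reduces further to $(0,2)$-matrices by replacing one crossing with an odd number of half twists (Proposition \ref{prop-M02}), and then realizes each T0 $(0,2)$-support pattern \emph{globally}, decomposing it only into summands that are themselves T0-realizable, using BW-ladder diagrams (hooks converted to half-twists by ladder moves) and an explicit enumeration of the T0 configurations for $n\leq 5$. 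Any repair of your approach would have to replace the per-entry blocks by such pattern-level building blocks.
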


For an $n$-braid diagram $B$, the {\it crossing matrix} $C(B)$ of $B$ is defined in \cite{Bu} to be the $n \times n$ zero-diagonal matrix whose $(i,j)$ entry is the algebraic number (positive minus negative) of crossings of $B$ where the strand $s_i$ is over the strand $s_j$ (see also \cite{Gu}).
If two braid diagrams $B$ and $B'$ represent the same braid, then $C(B)=C(B')$ holds by definition, and therefore the crossing matrix is an invariant of equivalence classes of braids. 
This means that the crossing matrix $C([B])$ of the equivalence class $[B]$ of $B$ is well defined. 
We denote $C([B])$ by $C(B)$ for simplicity. 
In \cite{Bu}, it is proved that the crossing matrix can completely characterize positive braids of canonical length at most 2 (in terms of the canonical form as explained in \cite{El, ThB}), whereas it is also proved that it is impossible for length 3 or more.  \par
As for the OU matrices, the equation $U(B)=U(B')$ does not hold for the braid diagrams $B$ and $B'$ that represent the same braid because the OU matrix changes when two consecutive crossings of two strands are produced or reduced, that is, it changes by a Reidemeister move of type I\hspace{-0.5pt}I. 
This implies that the OU matrix is not an invariant of equivalence classes of braids but an invariant of (planner isotopy class of) braid diagrams. 
For positive braids, it is shown in \cite{AY} that the OU matrix is an invariant by taking any positive diagram. \par 
In this paper, we show that the equality $C(B)=U(B)$ holds for any positive braid diagram $B$ (Proposition \ref{prop-OU-C}) and hence for any positive braids as well. 
In \cite{Bu}, a conjecture is proposed about a necessary and sufficient condition for a matrix to be the crossing matrix $C(B)$ of a positive pure braid $B$, and it was proved that the conjecture is true when $n\leq 3$. 
In this paper, we show that the conjecture is true when $n \leq 5$ (Corollary \ref{cor-C1}) by discussing OU matrices. 
The conjecture is now open for $n\geq 6$, although an algorithm that exhibits all positive braids with a given crossing matrix, if any exist, or declares that there are none is known (see \cite{Gu}).

The rest of the paper is organized as follows. 
In Section \ref{section-preliminaries}, we describe the definitions and properties of the OU matrix, CN matrix, and the crossing matrix. 
In Section \ref{section-02pure}, we investigate CN matrices whose entries are 0 or 2, introducing the BW-ladder diagram. 
In Section \ref{section-5b}, we investigate the CN matrices of $n$-pure braid diagrams for $n \leq 5$. 
In Section \ref{section-pf}, we prove Theorem \ref{thm-OU5}, namely, we characterize the OU matrices of pure braid diagrams up to $5 \times 5$. 
We also discuss the crossing matrices of positive pure braids and the OU matrices of braid diagrams that are not necessarily pure.

\section{OU, CN and crossing matrices}
\label{section-preliminaries}

This section includes an introductory overwiew of the OU, CN and crossing matrices. 
In Section \ref{subs-OU}, we review the definition of the OU matrix of a braid diagram which was defined in \cite{AY}. 
In Section \ref{subs-CN}, we define the CN matrix which will be used for the proof of the main theorem and see some properties. 
In Section \ref{subs-crossing}, we review the definition of the crossing matrix of a braid which was defined in \cite{Bu}. 
In Section \ref{subs-OUC}, we discuss the relation between the OU matrices and the crossing matrices. 
In Section \ref{subs-adm}, we discuss matrices which realize such matrices.

\subsection{OU matrix}
\label{subs-OU}

In this subsection, we review the definition and some properties of the OU matrix defined in \cite{AY}. 
Let $B$ be an $n$-braid diagram with the strands $s_1, s_2, \dots , s_n$ that are ordered from the left-hand side to the right-hand side on the top of $B$. 
Let $\pi$ be a permutation on $(1,2, \dots ,n)$. 
The {\it OU matrix $U(B, \pi )$ of $B$ with $\pi$} is an $n \times n$ zero-diagonal matrix $M$ such that $M(i,j)$ is the number of crossings on the $i^{th}$ strand that are over the $j^{th}$ strand ($i \neq j$), where the ``$k^{th}$ strand'' stands for the strand $s_{\pi(k)}$, that is, the $k^{th}$ strand according to the order $\pi$. 
In this paper, we also denote $U(B, id)$ simply by $U(B)$ for the identity permutation $id$. 
In Figure \ref{fig-ex-OUCN}, the OU matrices of a braid diagram $B_1$ with the permutations $id$ and $\pi =(4,3,1,2)$ are shown as $U(B_1)$ and $U(B_1 , \pi )$, respectively. 
\begin{figure}[ht]
\centering
\includegraphics[width=12cm]{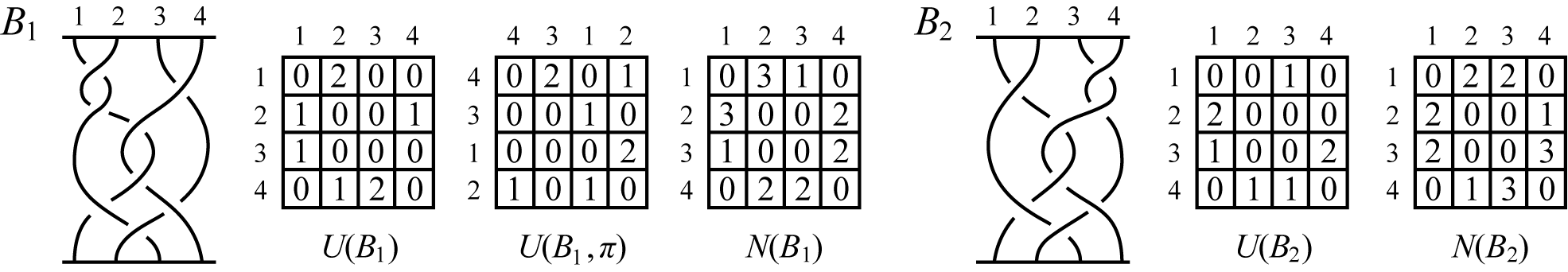}
\caption{Braid diagrams $B_1$, $B_2$ and their OU, CN matrices.}
\label{fig-ex-OUCN}
\end{figure}

\noindent For $n$-braid diagrams $B$ and $C$, we define the product $BC$ by the $n$-braid diagram obtained by placing $B$ above $C$ so that the endpoints of $B$ on the bottom bar coincide with those of $C$ on the top bar. 
We call this product the {\it braid product}. 
We have the following proposition. 

\begin{proposition}[\cite{AY}]
Let $B$, $C$ be $n$-braid diagrams, and let $\rho_B$ be the braid permutation of $B$. Then $U(BC, \pi)=U(B, \pi)+U(C, \pi \rho_B)$ holds for any strand permutation $\pi$. 
In particular, when $B$ is a pure braid, namely $\rho_B =id$, we have $U(BC, \pi)=U(B, \pi)+U(C, \pi)$.
\label{prop-BC-OU}
\end{proposition}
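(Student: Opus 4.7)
The plan is to reduce the identity to a straightforward bookkeeping argument, since the crossings of the product diagram $BC$ split naturally into those coming from $B$ and those coming from $C$, and the definition of $U(\cdot,\cdot)$ is additive over disjoint sets of crossings.

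First I would fix $(i,j)$ with $i\neq j$ and observe that every crossing of $BC$ lies either in the sub-diagram $B$ or in the sub-diagram $C$, so the count $U(BC,\pi)(i,j)$ decomposes as a sum of contributions from each. The $B$-part is transparent: the strand labels $s_1,\dots,s_n$ used to define $U(BC,\pi)$ at the top of $BC$ agree with the strand labels at the top of $B$, so by the very definition the $B$-crossings where the strand at position $\pi(i)$ passes over the strand at position $\pi(j)$ contribute exactly $U(B,\pi)(i,j)$.

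The only non-trivial step is tracking how the strand labels change when passing through $B$ to reach the top of $C$. If $\rho_B$ denotes the braid permutation, then the strand $s_k$ arrives at position $\rho_B(k)$ at the bottom of $B$, i.e.\ at position $\rho_B(k)$ on the top of $C$. Consequently, the strand labeled $s_{\pi(i)}$ in $BC$ enters $C$ as the strand at top-position $\rho_B(\pi(i))$. Viewing $C$ in isolation with its own positional labeling, counting $C$-crossings where $s_{\pi(i)}$ lies over $s_{\pi(j)}$ is then the same as counting crossings where the strand at position $\rho_B(\pi(i))$ lies over the strand at position $\rho_B(\pi(j))$, which by definition is $U(C,\sigma)(i,j)$ for the permutation $\sigma$ satisfying $\sigma(i)=\rho_B(\pi(i))$. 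Under the braid-composition convention $(\pi\rho_B)(i)=\rho_B(\pi(i))$ used in the paper, this is precisely $U(C,\pi\rho_B)(i,j)$.

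Adding the two contributions yields $U(BC,\pi)=U(B,\pi)+U(C,\pi\rho_B)$, and the pure-braid case is immediate from $\rho_B=\mathrm{id}$. I expect the main (and essentially only) obstacle to be pinning down the permutation convention unambiguously, so I would begin by spelling out explicitly that $\rho_B$ sends top-positions to bottom-positions and that products of permutations are read in the same order as stacking of braids; once that is fixed, the rest is a direct verification from the definitions.
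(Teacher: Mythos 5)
Your argument is correct: the crossings of $BC$ partition into those of $B$ and those of $C$, the $B$-part contributes $U(B,\pi)$ directly, and relabelling the strands at the interface by $\rho_B$ gives the $C$-part as $U(C,\pi\rho_B)$ under the convention $(\pi\rho_B)(i)=\rho_B(\pi(i))$, which you rightly identify as the only point needing care. The paper itself gives no proof here --- the proposition is quoted from \cite{AY} --- but your bookkeeping argument is the standard one and matches what that reference does.
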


\subsection{CN matrix}
\label{subs-CN}

In this subsection, we define the CN matrix. 
Let $B$ be an $n$-braid diagram with the strands $s_1, s_2, \dots , s_n$ ordered from left to right on the top of $B$. 
A {\it crossing number matrix} $N(B)$, or {\it CN matrix}, of an $n$-braid diagram $B$ is an $n \times n$ zero-diagonal matrix $M$ such that $M(i,j)$ is the number of crossings between $s_i$ and $s_j$ ($i \neq j$). 
By definition, $N(B)$ is a symmetric matrix, and $N(B)=U(B)+ \left( U(B) \right)^T$ holds for any $B$ (see Figure \ref{fig-ex-OUCN}). 
From Proposition \ref{prop-BC-OU}, we have the following proposition. 

\begin{proposition}
The addition $N(BC)=N(B)+N(C)$ holds when $B$ is a pure braid diagram. 
\label{prop-N-sum}
\end{proposition}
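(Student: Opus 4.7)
The plan is to deduce this directly from Proposition \ref{prop-BC-OU} together with the identity $N(B) = U(B) + U(B)^T$ that was stated just before the proposition.

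First, I would specialize Proposition \ref{prop-BC-OU} to the identity permutation $\pi = id$. Since $B$ is pure, $\rho_B = id$, so that proposition yields the clean additivity
\begin{equation*}
U(BC) = U(B) + U(C).
\end{equation*}
Transposition is an additive (linear) operation on matrices, so taking the transpose of both sides gives
\begin{equation*}
U(BC)^T = U(B)^T + U(C)^T.
\end{equation*}

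Next, I would add these two equations and invoke the definition $N(X) = U(X) + U(X)^T$, which holds for every braid diagram $X$, to obtain
\begin{equation*}
N(BC) = U(BC) + U(BC)^T = \bigl(U(B) + U(B)^T\bigr) + \bigl(U(C) + U(C)^T\bigr) = N(B) + N(C),
\end{equation*}
which is the desired identity.

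There is no real obstacle here: the statement is essentially a corollary of Proposition \ref{prop-BC-OU}, obtained by symmetrizing the over/under count into a total crossing count. The only subtle point worth highlighting in the write-up is why purity of $B$ matters, namely that $\rho_B = id$ is exactly what allows $\pi \rho_B = \pi$ on the right-hand side of Proposition \ref{prop-BC-OU}, so that the terms coming from $B$ and from $C$ are indexed by the same strand ordering and can legitimately be added entry by entry. Without purity of $B$, the CN contribution from $C$ would be recorded with respect to a permuted strand labeling and the naive addition would fail.
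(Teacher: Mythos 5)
Your proof is correct and follows exactly the route the paper intends: the paper derives this proposition from Proposition \ref{prop-BC-OU} (specialized to $\pi = id$ with $\rho_B = id$) combined with the identity $N(X) = U(X) + U(X)^T$, which is precisely your symmetrization argument. Your closing remark about why purity of $B$ is needed is a nice touch that the paper leaves implicit.
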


\noindent We also have the following proposition for pure braid diagrams. 

\begin{proposition}
All the entries of the CN matrix $N(B)$ of a braid diagram $B$ are even numbers if and only if $B$ is a pure braid diagram. 
\label{prop-even-pure}
\end{proposition}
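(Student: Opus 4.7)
The plan is to track, for each ordered pair $(i,j)$ with $i<j$, how the relative horizontal order of the two strands $s_i$ and $s_j$ evolves as one reads the braid diagram from top to bottom. At the top, $s_i$ lies to the left of $s_j$; at the bottom, their relative order is determined by the braid permutation $\rho_B$, namely whether $\rho_B(i)<\rho_B(j)$ or $\rho_B(i)>\rho_B(j)$. Between the top and the bottom, the relative order of $s_i$ and $s_j$ can only change at a crossing between them, and at every such crossing the relative order switches. Consequently, the parity of $N(B)(i,j)$ equals the parity of the number of relative-order changes between $s_i$ and $s_j$, which in turn equals $0$ if $\rho_B$ preserves the order of $i,j$ and $1$ if it reverses it.

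From this single observation both directions follow. For the "if" direction, suppose $B$ is pure, i.e., $\rho_B=\mathrm{id}$. Then $\rho_B$ preserves the order of every pair $i<j$, so the number of crossings between $s_i$ and $s_j$ must be even, which means every off-diagonal entry of $N(B)$ is even; the diagonal is zero by definition. For the "only if" direction, I argue contrapositively: if $B$ is not pure, then $\rho_B\neq \mathrm{id}$, and any non-identity permutation on $\{1,\dots,n\}$ has at least one inversion, i.e., a pair $i<j$ with $\rho_B(i)>\rho_B(j)$. For such a pair, the relative order of $s_i$ and $s_j$ is reversed between top and bottom, so $N(B)(i,j)$ is odd, contradicting the assumption that all entries of $N(B)$ are even.

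The only substantive step is the "relative-order swap" principle, which is essentially a parity argument about transpositions of adjacent strands in a generic projection. I would justify it in one line by noting that a small neighborhood of $s_i\cup s_j$ in the braid diagram is itself a $2$-braid diagram (after forgetting the other strands), and in a $2$-braid diagram the number of crossings has the parity of the underlying permutation. There is no real obstacle here; the proposition is essentially a parity bookkeeping exercise, and the only thing to be careful about is invoking the existence of an inversion in any non-identity permutation to produce the odd entry in the reverse direction.
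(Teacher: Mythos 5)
Your proof is correct and follows essentially the same route as the paper's one-line argument: the parity of the number of crossings between $s_i$ and $s_j$ records whether their relative order is preserved from top to bottom, and a permutation preserves the order of every pair if and only if it is the identity. You simply spell out the details (the swap-at-each-crossing principle and the existence of an inversion in a non-identity permutation) that the paper leaves implicit.
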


\begin{proof}
A braid diagram $B$ has the identity braid permutation if and only if each pair of strands has the same relative position on the top and bottom, and that is equivalent to that each pair of strands has an even number of crossings between them.
\end{proof}

\begin{definition}
For a square matrix $M$, the {\it reverse of $M$}, denoted by $M'$, is the matrix obtained from $M$ by reversing the order of the row and column. 
\end{definition}

\begin{proposition}
If a matrix $M$ is the CN matrix of a braid diagram, then the reverse $M'$ is also the CN matrix of a braid diagram.
\label{prop-order-rev}
\end{proposition}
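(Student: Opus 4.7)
The plan is to produce the required braid diagram explicitly by reflecting $B$ through a vertical axis. Let $B$ be an $n$-braid diagram with $N(B)=M$ and strands $s_1,\dots,s_n$ labeled from left to right along the top bar. Let $B^{\ast}$ denote the diagram obtained from $B$ by reflection through a vertical line in the plane of the page. This operation sends a regular projection to a regular projection, preserves the property that each strand runs monotonically from the top bar to the bottom bar (vertical reflection leaves the top/bottom structure intact), and preserves the over/under data at each crossing point. Hence $B^{\ast}$ is a legitimate $n$-braid diagram.

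Next I would track how the strand labeling changes. Under the reflection, the strand whose top endpoint was in position $i$ from the left now has its top endpoint in position $n+1-i$ from the left. Equivalently, if we relabel the strands of $B^{\ast}$ from left to right as $s'_1,\dots,s'_n$, then $s'_i$ is the image of $s_{n+1-i}$. Crucially, the reflection is a bijection of the crossings of $B$ onto the crossings of $B^{\ast}$ that preserves which pair of strands is involved at each crossing; only the signs of the crossings are swapped, but signs do not enter the definition of the CN matrix.

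Combining these observations gives
\begin{equation*}
N(B^{\ast})(i,j) \;=\; \#\{\text{crossings between } s'_i \text{ and } s'_j\} \;=\; \#\{\text{crossings between } s_{n+1-i} \text{ and } s_{n+1-j}\} \;=\; M(n+1-i,\,n+1-j),
\end{equation*}
which, by the definition of the reverse, equals $M'(i,j)$. Therefore $M' = N(B^{\ast})$ is the CN matrix of a braid diagram.

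There is no substantial obstacle here; the proof is essentially a bookkeeping argument about how a geometric symmetry of the diagram acts on the strand labels. The only point that requires a moment's care is the verification that vertical reflection yields a bona fide braid diagram (as opposed to, say, horizontal reflection, which would reverse the roles of top and bottom bars) and that it leaves over/under information intact, so that the count of crossings between each unordered pair of strands is unchanged.
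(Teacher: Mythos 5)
Your proof is correct and follows essentially the same approach as the paper, which takes the vertical rotation (mirror image) of $B$ so that the left-to-right order of the strands is reversed while the number of crossings between each pair is unchanged. The only difference is cosmetic: the paper's ``vertical rotation'' swaps over/under data where your planar reflection swaps crossing signs instead, but neither affects the CN matrix.
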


\begin{proof}
Let $B$ be an $n$-braid diagram such that $N(B)=M$. 
Take the vertical rotation $B'$. 
Then, the order of the strands is reversed, and we have $N(B')=M'$. (See $N(B_1)$ and $N(B_2)$ in Figure \ref{fig-ex-OUCN}, where $B_2 =B_1'$ and $N(B_2)=(N(B_1))'$.)
\end{proof}

\begin{proposition}
If $M$ is the CN matrix of a braid diagram, then $M$ is T0. 
\label{prop-adequate}
\end{proposition}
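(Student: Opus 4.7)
The plan is to read the T0 condition diagrammatically: it says that if strand $s_j$ has no crossings with either of its ``outer'' neighbors $s_i$ and $s_k$ (with $i<j<k$), then $s_i$ and $s_k$ themselves have no crossings. I will prove this by tracking the horizontal left-to-right order of the strands as we sweep a horizontal line from the top of the braid diagram down to the bottom.

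First, I would formalize the standard fact that, away from crossings, a horizontal line meeting the braid in transverse points gives a well-defined left-to-right ordering of the strands, and that this ordering changes only at a crossing, where the two strands involved are adjacent in the order and swap. At the top bar the order is $s_1,s_2,\ldots,s_n$ by the labeling convention.

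Next, assume $1\le i<j<k\le n$ with $N(B)(i,j)=N(B)(j,k)=0$. Since $s_i$ and $s_j$ never cross, they never swap in the sweep, so $s_i$ remains to the left of $s_j$ at every height. Similarly, $s_j$ remains to the left of $s_k$ at every height. Therefore, at every height, the horizontal positions of these three strands appear in the order $s_i,\ldots,s_j,\ldots,s_k$ with $s_j$ strictly between $s_i$ and $s_k$. In particular, $s_i$ and $s_k$ are never adjacent in the order, so they can never swap, i.e.\ they have no crossings. Hence $N(B)(i,k)=0$, which is exactly the T0 condition.

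The argument is essentially a one-paragraph ``sandwich'' observation, so there is no real obstacle; the only care needed is to state precisely that crossings correspond to adjacent swaps in the sweep order, and to invoke the fact that both endpoint orderings of non-crossing strand pairs are preserved from the top bar. This will be the cleanest place to cite the definition of a braid diagram from Section~\ref{section-intro}.
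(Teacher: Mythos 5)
Your proof is correct, but it runs in the opposite logical direction from the paper's and uses a different mechanism. The paper proves the contrapositive: assuming the outer pair does cross, it takes the uppermost crossing $c$ between the $i^{th}$ and $j^{th}$ strands, forms the disc-shaped region $R$ bounded by the top bar and the two strand segments above $c$, and argues by counting intersections of the middle strand with $\partial R$ that the middle strand must exit through one of the two bounding strands, hence crosses one of them. You instead argue directly via a sweep line: non-crossing pairs preserve their left-to-right order at every height (intermediate value theorem on the difference of horizontal positions), so the middle strand permanently separates the outer two, which therefore can never meet. Both arguments rest on the same facts (strands are monotone descending, the diagram is planar and regular), so neither is deeper; yours avoids the topological region and parity count and is arguably more elementary, while the paper's version localizes the obstruction at the first crossing of the outer pair, which is the natural statement to reuse elsewhere. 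One small polish point: your closing step ``never adjacent, so they can never swap, i.e.\ they have no crossings'' conflates swapping with crossing (a pair can cross twice without swapping); the clean finish is simply that $x_i(h)<x_j(h)<x_k(h)$ at every height forces $x_i(h)\neq x_k(h)$, so $s_i$ and $s_k$ never intersect -- you do not need the adjacent-swap lemma at all for this direction.
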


\begin{proof}
Let $B$ be a braid diagram such that $N(B)=M$. 
If $M(i,j) \neq 0$ for a pair of $i$ and $j$ with $i<j$, then $B$ has a crossing between the $i^{th}$ and $j^{th}$ strands by definition. 
Let $c$ be the upper-most crossing between the $i^{th}$ and $j^{th}$ strands. 
Let $R$ be the region bounded by the top horizontal bar, the $i^{th}$ and $j^{th}$ strands from the top to $c$. 
Since $R$ is homeomorphic to the disc $D^2$, each $k^{th}$ strand with $i<k<j$ has a positive even number of intersections with the boundary $\partial R$, and exactly one of them is on the top bar and the others are on the $i^{th}$ or $j^{th}$ strands. 
Hence, the $k^{th}$ strand has a crossing with the $i^{th}$ or $j^{th}$ strand, and we have $M(i,k)>0$ or $M(k,j)>0$. 
\end{proof}

\noindent Since CN matrices are zero-diagonal symmetric matrices, we consider strictly upper triangular matrices instead of symmetric matrices in Sections \ref{section-02pure} and \ref{section-5b} for simplicity.

\subsection{Crossing matrix}
\label{subs-crossing}

As mentioned in Section \ref{section-intro}, the crossing matrix is a matrix defined in \cite{Bu} for braids. 
Let $B$ be an $n$-braid diagram with strands $s_1, s_2, \dots , s_n$ with subscripts from left to right at the top of the braid diagram. 
The crossing matrix $C(B)$ of $B$ is an $n \times n$ zero-diagonal matrix such that each $(i,j)$ entry is the algebraic number of crossings, i.e., the number of positive crossings minus the number of negative crossings\footnote{The definition of positive/negative crossings in this paper is opposite to \cite{Bu} and same to \cite{Gu} or \cite{AY}.}, of $B$ where $s_i$ is over $s_j$. 
For a braid diagram $B$, we denote $[B]$ by the equivalent class of a braid that has $B$ as a diagram. 
We note that if two diagrams $B$ and $B'$ represent the same braid in ${\mathbb R}^3$, i.e., $[B]=[B']$, then $C(B)=C(B')$ (see [2, 5]). 
Therefore, for a braid $b=[B]$ that has a diagram $B$, we can define the crossing matrix $C(b)$ of $b$ by $C(B)$. 
In \cite{Bu}, the crossing matrices of $n$-braids and pure $n$-braids are completely characterized for all $n \in \mathbb{N}$. 
More specifically, it is proved in \cite{Bu} that an $n\times n$ matrix $M$ is the crossing matrix of a pure braid if and only if $M$ is a zero-diagonal integer symmetric matrix.
For positive pure braids, it is conjectured as follows. 

\begin{conjecture}[\cite{Bu}]
An $n\times n$ matrix $M$ is the crossing matrix of some positive pure braid if and only if $M$ is a non-negative integer T0 symmetric matrix.
\label{conj-C}
\end{conjecture}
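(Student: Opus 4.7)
My plan is to split the conjecture into its two directions. The forward direction (necessity) follows quickly from results already in the paper; the reverse direction (sufficiency) is the substantive part and would require an inductive construction valid in arbitrary dimension.

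For necessity, suppose $M = C(B)$ for a positive pure braid $B$. Non-negativity and integrality of the entries of $M$ are immediate, since every crossing of a positive diagram contributes exactly $+1$ to one of $M(i,j)$, $M(j,i)$. For symmetry I would restrict $B$ to any pair of strands $\{s_i, s_j\}$: erasing the other strands gives a positive pure 2-braid, necessarily $\sigma_1^{2k}$ for some $k \geq 0$, so $C(i,j) = C(j,i) = k$. For T0, Proposition \ref{prop-OU-C} gives $U(B) = C(B) = M$ for positive diagrams, hence $N(B) = M + M^T$; by the symmetry just proved, the zero pattern of $N(B)$ coincides with that of $M$, and since $N(B)$ is T0 by Proposition \ref{prop-adequate}, the T0 condition descends to $M$.

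For sufficiency, my plan is induction on $n$, taking $n \leq 5$ as base cases supplied by Corollary \ref{cor-C1} of the present paper. For the inductive step, given a T0 non-negative integer symmetric $n \times n$ matrix $M$, I would search for a positive pure braid $B_0$ whose crossing matrix $C(B_0)$ can be subtracted from $M$ so that the residual $M - C(B_0)$ remains T0, non-negative, symmetric, and strictly smaller in the well-order given by the total entry sum. Natural building blocks include $\sigma_i^2$ for adjacent pairs, partial full twists $\Delta_{[a,b]}^2 = (\sigma_a \cdots \sigma_{b-1})^{b-a+1}$ on intervals, and band-type braids of the form $w\,\sigma_i^2\,w^{-1}$ reshaped as positive words. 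A promising refinement is to extend the BW-ladder construction of Section \ref{section-02pure} past the $\{0,2\}$-entry regime treated there, so that it becomes the engine driving the induction for arbitrary $M$.

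The central obstacle, and the reason the conjecture remains open for $n \geq 6$, is that naively subtracting an elementary building block can destroy the T0 property of the residual. For instance, the matrix $\bigl(\begin{smallmatrix} 0 & 1 & 1 \\ 1 & 0 & 0 \\ 1 & 0 & 0 \end{smallmatrix}\bigr)$ is T0 and is realized by $\sigma_1 \sigma_2^2 \sigma_1$, yet subtracting $\sigma_1^2$ leaves a non-T0 residual. The building block must therefore depend on the global zero/non-zero pattern of the current $M$, and the inductive step must guarantee that for every admissible $M \neq 0$ some compatible positive pure braid can be peeled off. A high-level route is to identify the cone of crossing matrices of positive pure $n$-braids with the rational cone of T0 non-negative symmetric matrices and then use a Hilbert-basis style argument to pass from rational to integer realizability; classifying a generating set of extremal positive pure braids in dimension $\binom{n}{2}$ for arbitrary $n$ is the step I expect to demand the most work, and would likely draw on a Garside or band-generator normal-form analysis of the positive pure braid monoid.
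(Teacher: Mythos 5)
Your necessity argument is sound, and in fact it works for all $n$: non-negativity and integrality are immediate for a positive diagram, your two-strand restriction re-proves Proposition \ref{prop-sy}, and your reduction of T0 for $M$ to T0 for $N(B)=M+M^T$ via Propositions \ref{prop-OU-C} and \ref{prop-adequate} is exactly the content of Lemma \ref{lem-symT0} combined with Corollary \ref{cor-5ppp}. The problem is the other direction. The statement is labelled a \emph{conjecture} in the paper precisely because sufficiency is open for $n\geq 6$; the paper only establishes it for $n\leq 5$ (Corollary \ref{cor-C1}), by first characterizing the CN matrices of pure braid diagrams through the case-by-case BW-ladder analysis of Sections \ref{section-02pure} and \ref{section-5b} (Corollary \ref{prop-CN5}), then passing to OU matrices by crossing changes (Corollary \ref{cor-oucn}, Lemma \ref{lem-sisj}), and finally to crossing matrices via $C(B)=U(B)$ for positive diagrams.

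Your sufficiency argument is a research plan, not a proof. The inductive step --- producing, for every nonzero admissible $M$, a positive pure braid whose crossing matrix can be subtracted while keeping the residual T0, non-negative, symmetric and integral --- is exactly the unresolved difficulty, and your own example with $\sigma_1\sigma_2^2\sigma_1$ shows that the obvious choice of building block fails. Neither the existence of a pattern-adapted building block for arbitrary zero/non-zero configurations, nor the identification of the cone of crossing matrices of positive pure $n$-braids with the cone of T0 non-negative symmetric matrices, nor a Hilbert-basis passage from rational to integral realizability, nor the extension of the ladder-move calculus beyond $(0,2)$-entries is carried out in your text (or anywhere in the paper). As written, your argument establishes the conjecture only for $n\leq 5$, and it does so by citing Corollary \ref{cor-C1}, i.e., by citing the paper's own result; for $n\geq 6$ the statement remains exactly as open as before.
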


\noindent It has been shown that the conjecture is true when $n \leq 3$ in \cite{Bu}.

\subsection{OU matrix and crossing matrix}
\label{subs-OUC}

In this subsection, we discuss the relation between the OU matrix and the crossing matrix. 
In \cite{AY}, the following proposition was shown for positive pure braid diagrams. 

\begin{proposition}[\cite{AY}]
The OU matrix of a positive pure braid diagram is a symmetric matrix. 
\label{prop-sy}
\end{proposition}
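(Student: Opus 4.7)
The plan is to pick any two strands $s_i$ and $s_j$ and track their relative left-to-right order as one reads the diagram from top to bottom. Because all crossings are positive, at every crossing the strand currently on the left passes over the one on the right; because the braid is pure, $s_i$ and $s_j$ return to their original relative order at the bottom. Together these observations should force the crossings between $s_i$ and $s_j$ to alternate between ``$s_i$ over $s_j$'' and ``$s_j$ over $s_i$'' and to occur in equal numbers.

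Concretely, I would first slice $B$ by horizontal lines between consecutive crossings, so on each slice the $n$ strands have a well-defined left-to-right ordering. At a crossing only the two participating strands swap positions, and these two are adjacent on the slice just above. In particular the relative order of $s_i$ and $s_j$ is toggled at a crossing if and only if that crossing is itself a crossing between $s_i$ and $s_j$. Next, positivity (every crossing is a $\sigma_k$, not a $\sigma_k^{-1}$) says that the over-strand at any crossing is the one that was on the left just above. Combining these two observations, the $m$ crossings between $s_i$ and $s_j$, read from top to bottom, have over-strand alternating $s_i, s_j, s_i, s_j, \dots$. Finally, by Proposition \ref{prop-even-pure} (or directly, since $s_i$ and $s_j$ must end in their starting relative order) $m$ is even, so exactly $m/2$ of these crossings have $s_i$ over $s_j$, giving $U(B)(i,j) = U(B)(j,i)$ for every pair.

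The delicate step is the first one: the combinatorial claim that the relative order of $s_i$ and $s_j$ is changed only at a crossing between $s_i$ and $s_j$ themselves. An unrelated crossing can shift $s_i$ or $s_j$ horizontally, but since neither is a participant it cannot transport one past the other. Once that is formalized via the horizontal slicing, the rest of the argument is a clean parity count and symmetry of $U(B)$ follows immediately.
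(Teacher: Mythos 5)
Your argument is correct and complete: the relative left-to-right order of $s_i$ and $s_j$ changes exactly at their mutual crossings (an unrelated crossing swaps two strands that are adjacent at that height, so it cannot carry $s_i$ past $s_j$), positivity makes the over-strand at each such crossing the one determined by that order, so the over-strand alternates, and pureness forces the number of mutual crossings to be even, giving $U(B)(i,j)=U(B)(j,i)$. Note that the paper itself offers no proof of this proposition --- it is imported from \cite{AY} --- so your write-up is a self-contained justification rather than a variant of an argument in this text; the only point worth polishing is that ``left passes over right'' depends on the sign convention for positive crossings, but any fixed consistent convention yields the same alternation, so the conclusion is unaffected.
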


\noindent The OU matrix coincides with the crossing matrix for positive braid diagrams. 

\begin{proposition}
When $B$ is a positive braid diagram, the equality $OU(B)=C(B)$ holds. 
\label{prop-OU-C}
\end{proposition}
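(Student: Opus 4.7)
The plan is essentially a one-line comparison of definitions. Recall that, for an ordered pair $(i,j)$ with $i \neq j$, the entry $U(B)(i,j)$ counts every crossing at which $s_i$ passes over $s_j$, while $C(B)(i,j)$ is the signed count, namely the number of positive such crossings minus the number of negative such crossings. Thus the two entries differ precisely by twice the number of negative crossings in which $s_i$ is the overstrand of $s_j$.

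I would then invoke the hypothesis that $B$ is a positive braid diagram. By definition (and using the sign convention fixed in the footnote of Section \ref{subs-crossing}, matching \cite{AY, Gu}), a positive braid diagram is a product of the generators $\sigma_i$ alone, with no occurrences of $\sigma_i^{-1}$, so every crossing of $B$ is positive. In particular, for each ordered pair $(i,j)$ the number of negative crossings at which $s_i$ is over $s_j$ is zero.

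Combining the two observations, every entry of $U(B)$ agrees with the corresponding entry of $C(B)$, and both diagonal entries are zero by convention. Hence $U(B) = C(B)$, as claimed. The only point worth stating explicitly in the write-up is the sign convention, so that "positive braid diagram" is unambiguously the one under which every crossing contributes $+1$ to $C(B)$; beyond this there is no real obstacle.
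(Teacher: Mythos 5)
Your proposal is correct and follows essentially the same route as the paper: both arguments reduce to observing that for each ordered pair $(i,j)$ the OU entry counts all crossings where $s_i$ is over $s_j$, the crossing-matrix entry counts them with signs, and positivity of $B$ makes every sign $+1$. Your explicit remark that the discrepancy equals twice the number of negative crossings is a harmless refinement of the same observation.
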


\begin{proof}
Let $k$ be the number of crossings between a pair of strands $s_i$ and $s_j$ where $s_i$ is over $s_j$. 
By the definition of the OU matrix, the $(i,j)$ enrty of the OU matrix $U(B)$ is $k$. 
The $(i,j)$ entry of the crossing matrix $C(B)$ is also $k$ because all the crossings are positive crossings. 
\end{proof}

Let ${\mathcal B}_n$ be the set of planar isotopy classes of $n$-braid diagrams. 
Then, ${\mathcal B}_n$ forms a monoid under the braid product. 
Let ${\mathcal P}_n$  be the submonoid of ${\mathcal B}_n$ which consists of planar isotopy classes of pure braid diagrams and let ${\mathcal P}_n^{+}$ be the submonoid of ${\mathcal P}_n$ which consists of planar isotopy classes of positive pure braid diagrams. 
Let $B_n$ be the $n$-braid group, that is, the set of equivalent classes of $n$-braids with the group operation naturally induced by the braid product. 
Let $P_n$ be the subgroup of $B_n$ which consists of pure $n$-braids and let $P_n^{+}$ be the monoid which consists of positive pure $n$-braids. \par 

Let $X_n$ be the set of $n\times n$ zero-diagonal integer matrix and let $X_n^{+}$ be the set of matrices $M\in X_n$ such that $M(i,j) \geq 0$ for any $i,j$. 
We note that $(X_n,+)$ is a group whereas $(X_n^{+},+)$ is a monoid. 
Let $C: B_n\to X_n$ be the map defined by that $C(b)=C(B)$ is the crossing matrix of $b=[B]\in B_n$, let $U: {\mathcal B}_n\to X_n^{+}$ be the map defined by that $U(B)$ is the OU matrix of $B\in {\mathcal B}_n$ and let $N: {\mathcal B}_n\to X_n^{+}$ be the map defined by $N(B)$ is the CN matrix of $B\in {\mathcal B}_n$. \par 

For $M\in X_n$ and a permutation $\pi \in S_n=Sym\{1,\dots ,n\}$, let $M^{\pi}$ be the $n\times n$ matrix obtained by rearranging the rows and columns of $M$ according to $\pi$, i.e. the $i^{th}$ row and column are $\pi^{-1}(i)^{th}$ row and column of the original $M$, respectively. 
We easily see that for $M\in X_n$ (resp. $M\in X_n^{+}$), it holds that $M^{\pi}\in X_n$ (resp. $M^{\pi}\in X_n^{+}$), and we obtain a group (resp. monoid) right action of $S_n$ on $X_n$. 
Thus, we have semi-direct products $X_n\rtimes S_n$ (as a group) and $X_n^{+}\rtimes S_n$ (as a monoid) using the above right action of $S_n$ on $X_n$ and $X^{+}$. \par 

For any two $n$-braids $a$ and $b \in B_n$, it holds that $C(ab)=C(a)+C(b)^{\pi_a}$ (see \cite{Bu, Gu}). 
Thus, the map $C: B_n \to X_n$ induces a group homomorphism $\widetilde{C}: B_n\to X_n\rtimes S_n$ defined by $\widetilde{C}(b)=(C(b), \rho_b)$ for $b\in B_n$, where $\rho_b\in S_n$ is the braid permutation of $b$. 
As noted in \cite{Gu}, ${\rm Ker}\ \widetilde{C}=[P_n, P_n]\times \{id\}$, where $[P_n, P_n]$ is the comutator subgroup of $P_n$. 
On the other hand, for two $n$-braid diagrams $A, B \in {\mathcal B}_n$, it holds that $U(AB)=U(A)+U(B)^{\pi_A}$ (see \cite{AY}). 
Then for each braid diagram $B$ of $b\in B_n$ and each $W\in \{C,N\}$, the map $W: {\mathcal B}_n\to X_n^{+}$ induces a monoid homomorphism $\widetilde{W}: {\mathcal B}_n\to X_n^{+}\rtimes S_n$ defined by $\widetilde{W}(B)=(W(B), \rho_b)$. 
For each $W\in \{C, N\}$, it obviously holds that $\widetilde{W}(B)=(O, id)$ for $B\in {\mathcal B}_n$, where $O$ is the zero matrix, if and only if $B$ is a ``trivial braid diagram'', that is, there are no crossings in $B$.

\subsection{Realizable matrices}
\label{subs-adm}

We say that a matrix $M$ is {\it OU-realizable} (resp.~{\it CN-realizable}) if there exists a braid diagram $B$ such that $U(B)=M$ (resp.~$N(B)=M$). 
We say that a matrix $M$ is {\it C-realizable} if there exists a braid $b$ such that $C(b)=M$. 
By Proposition \ref{prop-adequate}, we have the following proposition. 

\begin{proposition}
A square matrix $M$ is not OU-realizable if $M+M^T$ is not T0. 
\end{proposition}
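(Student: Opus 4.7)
The plan is to prove the contrapositive: if $M$ is OU-realizable then $M+M^T$ must be T0. This should follow almost immediately from material already developed in Section \ref{subs-CN}, so the argument is very short.

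First I would suppose $M$ is OU-realizable, so there exists a braid diagram $B$ with $U(B)=M$. Then I would invoke the identity $N(B)=U(B)+U(B)^T$ noted right after the definition of the CN matrix in Section \ref{subs-CN}; this gives $M+M^T=N(B)$, so $M+M^T$ is the CN matrix of an actual braid diagram.

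Next I would apply Proposition \ref{prop-adequate}, which says that the CN matrix of any braid diagram is T0. This forces $M+M^T$ to be T0, and taking the contrapositive yields the stated claim.

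There is essentially no obstacle here: the key ingredients, namely the relation $N(B)=U(B)+U(B)^T$ and the T0 property of CN matrices, are already established earlier in the paper, so the proposition is a direct corollary. The only thing to be careful about is to phrase the contrapositive cleanly and to cite Proposition \ref{prop-adequate} explicitly rather than redoing the topological region argument.
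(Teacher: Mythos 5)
Your proof is correct and matches the paper's approach exactly: the paper derives this proposition directly from Proposition \ref{prop-adequate} together with the identity $N(B)=U(B)+U(B)^T$, which is precisely the contrapositive argument you give.
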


\begin{example}
For  
\begin{align*}
A=
\begin{pmatrix}
0 & 2 & 0 & 0 \\
1 & 0 & 0 & 1 \\
1 & 0 & 0 & 0 \\
0 & 1 & 2 & 0 
\end{pmatrix}, \ B=
\begin{pmatrix}
0 & 0 & 1 & 1 \\
0 & 0 & 0 & 0 \\
0 & 1 & 0 & 0 \\
1 & 0 & 1 & 0 
\end{pmatrix},
\end{align*}
the matrix $A$ is OU-realizable because $A=U(B_1)$ in Figure \ref{fig-ex-OUCN}, whereas $B$ is not OU-realizable since $B+B^T$ is not T0. 
In addition, $A$ and $B$ are not CN-realizable because they are not symmetric. 
\end{example}

\begin{proposition}
If an $n \times n$ symmetric matrix $M$ is CN-realizable, then any non-negative integer matrix $M_1$ that satisfies $M_1+ M_1^T =M$ is OU-realizable. 
\label{prop-CNOU-ad}
\end{proposition}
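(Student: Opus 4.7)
The plan is to start from a braid diagram realizing the CN matrix $M$, then modify it by crossing changes (swapping over/under information at individual crossings) to adjust the OU matrix to match the prescribed $M_1$, observing that such crossing changes preserve both the braid-diagram structure and the CN matrix.

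First I would invoke the hypothesis: since $M$ is CN-realizable, there is an $n$-braid diagram $B$ with $N(B)=M$. In particular, for every pair $i<j$, the strands $s_i$ and $s_j$ cross exactly $M(i,j)$ times. Writing $a_{ij}:=U(B)(i,j)$ and $b_{ij}:=U(B)(j,i)$, we have $a_{ij}+b_{ij}=M(i,j)=M_1(i,j)+M_1(j,i)$.

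Next I would modify $B$ pair-by-pair. Fix $i<j$. Among the $M(i,j)$ crossings between $s_i$ and $s_j$, if $a_{ij}>M_1(i,j)$ then choose any $a_{ij}-M_1(i,j)$ of the crossings where $s_i$ is over $s_j$ and perform a crossing change at each (i.e., swap the over-strand and the under-strand locally at that point). If instead $a_{ij}<M_1(i,j)$, choose $M_1(i,j)-a_{ij}$ of the crossings where $s_j$ is over and flip them in the same way. The resulting diagram now has exactly $M_1(i,j)$ crossings with $s_i$ over $s_j$ and $M_1(j,i)$ with $s_j$ over $s_i$. Carrying this out independently for every unordered pair yields a diagram $B'$.

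Finally I would check that $B'$ is a legitimate braid diagram with the desired matrices. A crossing change is a purely local modification at a double point: it does not move the strands, so each strand of $B'$ still runs monotonically from the top bar to the bottom bar, hence $B'$ is a braid diagram. The underlying projection is unchanged, so for every pair $i,j$ the total number of crossings between $s_i$ and $s_j$ is still $M(i,j)$; thus $N(B')=M$ and, by construction, $U(B')=M_1$, proving $M_1$ is OU-realizable.

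There is essentially no obstacle here beyond making the bookkeeping precise: the nontrivial content is that ``crossing change'' is an admissible operation on braid diagrams (it preserves the monotone top-to-bottom property) and that it redistributes the entries of $U(B)$ across the diagonal of $N(B)$ without altering $N(B)$ itself. Both facts are immediate from the definitions reviewed in Sections \ref{subs-OU} and \ref{subs-CN}, so the argument reduces to the pair-by-pair adjustment above.
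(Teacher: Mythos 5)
Your proposal is correct and follows essentially the same route as the paper: take a diagram $B$ with $N(B)=M$ and apply crossing changes pair-by-pair so that exactly $M_1(i,j)$ of the crossings between $s_i$ and $s_j$ have $s_i$ over. The paper's proof is just a terser statement of the same adjustment; your additional bookkeeping and the observation that crossing changes preserve the braid-diagram structure and the CN matrix are implicit there.
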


\begin{proof}
Let $M$ be a CN-realizable $n \times n$ matrix and let $B$ be an $n$-braid diagram such that $N(B)=M$. 
Let $M_1$ be a matrix such that $M_1 +M_1^T =M$, that is, $M_1 (i,j)+M_1(j,i)=M(i,j)$, and $M_1(i,j) \geq 0$ for any $i$ and $j$. 
For each pair of strands $s_i$ and $s_j$ of $B$, apply crossing changes if necessary so that $s_i$ is over $s_j$ at $M_1(i,j)$ crossings and $s_i$ is under $s_j$ at $M_1(j,i)$ crossings. 
Then we obtain a braid diagram $B'$ such that $U(B')=M_1$. 
\end{proof}

\noindent Since a crossing change does not change the braid permutation, we have the following corollary from the proof of Proposition \ref{prop-CNOU-ad}.

\begin{corollary}
If an $n \times n$ symmetric matrix $M$ is CN-realizable by a pure $n$-braid diagram, then any non-negative integer matrix $M_1$ that satisfies $M_1+ M_1^T =M$ is OU-realizable by a pure $n$-braid diagram. 
\label{cor-oucn}
\end{corollary}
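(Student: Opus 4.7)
The plan is to simply re-run the construction given in the proof of Proposition \ref{prop-CNOU-ad} and check that purity is preserved by each step. Given a pure $n$-braid diagram $B$ with $N(B)=M$, the proof of Proposition \ref{prop-CNOU-ad} produces a diagram $B'$ with $U(B')=M_1$ by performing a finite sequence of crossing changes on $B$: for each pair $(i,j)$, switch over/under at appropriately chosen crossings between $s_i$ and $s_j$ until exactly $M_1(i,j)$ of them have $s_i$ over $s_j$ (which is compatible with the total count $M(i,j)=M_1(i,j)+M_1(j,i)$ guaranteed by the hypothesis $M_1+M_1^T=M$).

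The key point I would emphasize is that a crossing change alters only the over/under decoration at a single double point of the projection, leaving the underlying planar ``shadow'' of the braid diagram untouched. Consequently, the map sending each top endpoint to the bottom endpoint reached by following the corresponding strand is unaffected. In other words, the braid permutation $\rho_{B'}$ equals $\rho_B$.

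Since $B$ is pure, $\rho_B=\mathrm{id}$, hence $\rho_{B'}=\mathrm{id}$, so $B'$ is a pure $n$-braid diagram. Combined with $U(B')=M_1$ from Proposition \ref{prop-CNOU-ad}, this yields the conclusion.

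There is really no obstacle here: the corollary is a bookkeeping consequence of the explicit construction in Proposition \ref{prop-CNOU-ad}, and the only substantive remark to make is that crossing changes act on decorations of crossings rather than on the strand-routing combinatorics. Accordingly, the proof can be presented in just a sentence or two, referring back to the construction in the proof of Proposition \ref{prop-CNOU-ad} and citing the invariance of the braid permutation under crossing changes.
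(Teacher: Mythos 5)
Your proposal is correct and matches the paper's own argument exactly: the paper derives this corollary from the construction in Proposition \ref{prop-CNOU-ad} together with the single observation that a crossing change leaves the braid permutation unchanged, which is precisely the point you emphasize.
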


\noindent The following proposition allows us to extend the size of CN-realizable matrices.

\begin{proposition}
Let $M_1$ be a CN-realizable $n \times n$ matrix. 
Let $M_2$ be an $(n+1) \times (n+1)$ matrix such that $M_2(i,j)=M_1(i,j)$ for each $1 \leq i, j \leq n$ and $M_2(i, n+1)=M_2(n+1,j)=0$ for each $1\leq i,j \leq n$. 
Then $M_2$ is also CN-realizable. 
\label{prop-plus1}
\end{proposition}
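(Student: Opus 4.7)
The plan is to realize $M_2$ by adjoining a single trivial strand to the right of a diagram that realizes $M_1$. Concretely, since $M_1$ is CN-realizable, pick an $n$-braid diagram $B$ with $N(B)=M_1$, whose strands are labelled $s_1,\dots,s_n$ from left to right on the top bar. By a planar isotopy, I would first compress $B$ so that the entire diagram lies strictly to the left of some vertical line $x=x_0$, and then attach a new strand $s_{n+1}$ running straight down from the top bar to the bottom bar along the line $x=x_0+1$ (after extending the top and bottom bars to include this endpoint). Call the resulting $(n+1)$-braid diagram $B'$.

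Next I would verify that $N(B')=M_2$. For each pair $1\le i<j\le n$, the strands $s_i$ and $s_j$ of $B'$ coincide with those of $B$, so the number of crossings between them is unchanged; hence the $(i,j)$ entry of $N(B')$ equals $N(B)(i,j)=M_1(i,j)=M_2(i,j)$. For crossings involving the new strand $s_{n+1}$, by construction $s_{n+1}$ is a vertical segment disjoint from every other strand, so $N(B')(i,n+1)=N(B')(n+1,j)=0=M_2(i,n+1)=M_2(n+1,j)$ for every $1\le i,j\le n$. The diagonal is zero by the definition of a CN matrix. Thus $N(B')=M_2$.

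The only mildly non-trivial point is the existence of the planar isotopy that shrinks $B$ to one side of the bar, but this is immediate from the compactness of a braid diagram: any braid diagram can be placed inside an arbitrarily narrow vertical strip by rescaling horizontally, and the top and bottom bars can be extended without creating new crossings. So there is no real obstacle, and the proposition follows at once.
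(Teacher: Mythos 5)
Your proposal is correct and is essentially the paper's own argument: realize $M_1$ by a diagram $B_1$ and append a straight strand on the right to obtain a diagram realizing $M_2$. The extra care you take with the planar isotopy and the entry-by-entry verification is fine but not needed beyond what the paper states.
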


\begin{proof}
Let $B_1$ be an $n$-braid diagram with $N(B_1)=M_1$. 
Let $B_2$ be an $(n+1)$-braid diagram obtained from $B_1$ by adding a straight strand on the right-hand side. 
Then, $N(B_2)=M_2$. 
\end{proof}

\section{$(0,2)$-CN matrix of pure braid diagram}
\label{section-02pure}

From now on in Sections \ref{section-02pure} and \ref{section-5b}, we denote the CN matrices, which are zero-diagonal symmetric matrices, by strictly upper triangular matrices by replacing the entries below the main diagonal with $0$, for simplicity. 
For example, we denote the CN matrix 
\begin{align*}
\begin{pmatrix}
0 & 2 & 0 & 2 \\
2 & 0 & 2 & 0 \\
0 & 2 & 0 & 2 \\
2 & 0 & 2 & 0 
\end{pmatrix}
\text{ by }
\begin{pmatrix}
0 & 2 & 0 & 2 \\
0 & 0 & 2 & 0 \\
0 & 0 & 0 & 2 \\
0 & 0 & 0 & 0 
\end{pmatrix}.
\end{align*}

\begin{definition}
We call a matrix whose entries are $0$ or $2$ a {\it $(0,2)$-matrix}. 
In particular, when a CN matrix is a $(0,2)$-matrix, we call it a $(0,2)$-CN matrix. 
\end{definition}

In this section, we explore $(0,2)$-CN matrices. 
In Section \ref{sub02-properties}, we see some properties of the $(0,2)$-CN matrices. 
In Section \ref{sub02-ladder}, we introduce a tool to construct a braid diagram that realizes the $(0,2)$-CN matrix for some $(0,2)$-matrices.

\subsection{Properties of $(0,2)$-CN matrix}
\label{sub02-properties}

We have the following proposition from Proposition \ref{prop-even-pure}. 

\begin{proposition}
When the CN matrix $N(B)$ of a braid diagram $B$ is a $(0,2)$-matrix, $B$ is a pure braid diagram. 
\label{prop-02pure}
\end{proposition}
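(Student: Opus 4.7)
The plan is to recognize that Proposition \ref{prop-02pure} is an immediate specialization of the already-established Proposition \ref{prop-even-pure}. By definition a $(0,2)$-matrix has every entry lying in $\{0,2\}$, and both $0$ and $2$ are even integers; so the hypothesis that $N(B)$ is a $(0,2)$-CN matrix is strictly stronger than the hypothesis that every entry of $N(B)$ is even.

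First I would unpack the hypothesis, explicitly noting that every entry of $N(B)$ is an even integer. Then I would invoke Proposition \ref{prop-even-pure}, which asserts that the CN matrix of a braid diagram has all even entries if and only if the diagram is pure. The ``all even $\Rightarrow$ pure'' direction of this equivalence applies directly to $B$ and yields the desired conclusion that $B$ is a pure braid diagram.

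There is no real obstacle here; the proposition is essentially a one-line corollary of Proposition \ref{prop-even-pure}, and the only content of the argument is the trivial observation that entries taken from $\{0,2\}$ are even. For this reason I would keep the proof to a single sentence in the final write-up, simply citing Proposition \ref{prop-even-pure}.
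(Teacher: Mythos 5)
Your proposal is correct and matches the paper exactly: the paper presents this proposition as an immediate consequence of Proposition \ref{prop-even-pure}, with no further argument needed beyond observing that entries in $\{0,2\}$ are even.
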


\noindent Then we have the following proposition. 

\begin{proposition}
If $n \times n$ $(0,2)$-matrices $M_1$ and $M_2$ are CN-realizable, then the sum $M_1 +M_2$ is also CN-realizable. 
\label{prop-block}
\end{proposition}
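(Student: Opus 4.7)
The plan is to realize $M_1+M_2$ simply as the CN matrix of the braid product of two diagrams that realize $M_1$ and $M_2$ respectively. Since $M_1$ and $M_2$ are assumed to be CN-realizable, I can pick $n$-braid diagrams $B_1$ and $B_2$ with $N(B_1)=M_1$ and $N(B_2)=M_2$. The key observation that makes the product behave well is Proposition \ref{prop-N-sum}, which ensures that $N(B_1 B_2) = N(B_1) + N(B_2)$ provided that the top factor $B_1$ is a pure braid diagram.

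First I would verify that the hypothesis of Proposition \ref{prop-N-sum} is met. Because $M_1$ is a $(0,2)$-matrix, all its entries are even, so by Proposition \ref{prop-02pure} (which builds on Proposition \ref{prop-even-pure}), any braid diagram realizing $M_1$ as its CN matrix is automatically a pure braid diagram. Hence $B_1 \in \mathcal{P}_n$.

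Applying Proposition \ref{prop-N-sum} then gives $N(B_1 B_2) = N(B_1) + N(B_2) = M_1 + M_2$, so $B_1 B_2$ is the desired $n$-braid diagram witnessing that $M_1 + M_2$ is CN-realizable. Note that $B_2$ need not itself be pure for this argument — the purity of $B_1$ alone suffices — though in our setting $B_2$ is automatically pure for the same reason as $B_1$, and consequently the product $B_1 B_2$ is also a pure braid diagram, which is consistent with the fact that $M_1+M_2$ is again a non-negative even matrix.

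There is no serious obstacle in this proof: the real work was already absorbed into Propositions \ref{prop-even-pure} and \ref{prop-N-sum}. The only mild subtlety to flag is that $M_1 + M_2$ need not be a $(0,2)$-matrix itself (its entries can be $0$, $2$, or $4$), so the statement is genuinely about CN-realizability of the sum rather than about staying inside the class of $(0,2)$-matrices.
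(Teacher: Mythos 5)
Your proof is correct and follows exactly the paper's own argument: take diagrams $B_1$, $B_2$ realizing $M_1$, $M_2$, note that $B_1$ is pure by Proposition \ref{prop-02pure}, and apply Proposition \ref{prop-N-sum} to the braid product $B_1B_2$. Your closing remarks about $B_2$ and about the sum leaving the class of $(0,2)$-matrices are accurate but not needed.
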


\begin{proof}
Let $B_1$, $B_2$ be $n$-braid diagrams such that $N(B_1)=M_1$, $N(B_2)=M_2$. 
Since $M_1$, $M_2$ are $(0,2)$-matrices, $B_1$, $B_2$ are pure braid diagrams by Proposition \ref{prop-02pure}. 
By Proposition \ref{prop-N-sum}, then, $N(B_1 B_2)=N(B_1)+N(B_2)=M_1 +M_2$. 
Hence, $M_1 +M_2$ is CN-realizable, too. 
\end{proof}

\noindent The following proposition implies that studying $(0,2)$-CN matrices is essential for the study of CN matrices of pure braid diagram. 

\begin{proposition}
Let $M$ be an $n \times n$ strictly upper triangular matrix whose entries are non-negative even numbers. 
Let $M^{02}$ be the $(0,2)$-matrix obtained from $M$ by replacing all positive entries with 2. 
If $M^{02}$ is CN-realizable, then $M$ is also CN-realizable by a pure braid diagram. 
\label{prop-M02}
\end{proposition}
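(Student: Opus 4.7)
The plan is to take a braid diagram realizing $M^{02}$ and then inflate each nonzero pair-entry of its CN matrix locally by inserting Reidemeister-II--style bubbles at existing crossings.

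First, by hypothesis there is a braid diagram $B$ with $N(B)=M^{02}$, and Proposition~\ref{prop-02pure} shows that $B$ is a pure braid diagram. For every pair $(i,j)$ with $i<j$, write $M(i,j)=2k_{ij}$; by the construction of $M^{02}$ we have $k_{ij}=0$ exactly when $M^{02}(i,j)=0$, i.e.\ when $s_i$ and $s_j$ have no crossings in $B$, while $k_{ij}\geq 1$ exactly when $M^{02}(i,j)=2$ and $s_i$, $s_j$ cross exactly twice in $B$.

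For each pair $(i,j)$ with $k_{ij}\ge 1$, I would choose a small neighborhood of one of the two existing crossings between $s_i$ and $s_j$ and insert $k_{ij}-1$ bubbles there -- each bubble being a pair $\sigma_p\sigma_p^{-1}$ in the local braid-word description, which introduces exactly two new crossings between $s_i$ and $s_j$ and none between any other pair of strands. After performing these insertions for all relevant pairs, the resulting diagram $B'$ satisfies $N(B')(i,j)=2+2(k_{ij}-1)=2k_{ij}=M(i,j)$ whenever $k_{ij}\ge 1$, and $N(B')(i,j)=0$ whenever $k_{ij}=0$ since those pairs are untouched. Hence $N(B')=M$.

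Since every inserted bubble contributes the identity permutation, $B'$ inherits the identity braid permutation of $B$, so $B'$ is a pure braid diagram, as required. The only step requiring care is confirming that the bubble insertion can always be carried out inside a disk meeting only $s_i$ and $s_j$, so that no other entries of the CN matrix are disturbed; this is immediate from the local picture at the chosen crossing, where the two strands are momentarily adjacent, and is really the sole topological ingredient of the argument -- the rest is bookkeeping on pair-entries.
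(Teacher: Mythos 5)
Your proposal is correct and essentially matches the paper's argument: both start from a diagram realizing $M^{02}$ (pure by Proposition~\ref{prop-02pure}) and locally increase the crossing count of each pair from $2$ to $M(i,j)$ near one existing crossing while preserving the braid permutation. The only cosmetic difference is that you insert $k_{ij}-1$ Reidemeister-II bubbles, whereas the paper replaces one crossing by $M(i,j)-1$ half twists; since the CN matrix ignores crossing signs, the two modifications are interchangeable.
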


\begin{proof}
Let $B$ be an $n$-braid diagram such that $N(B)=M^{02}$. 
We note that $B$ is a pure braid diagram by Proposition \ref{prop-02pure}.  
For each entry $M(i,j)$, if $M(i,j) \neq M^{02}(i,j)$, that is, $M(i,j) \geq 4$, replace one of the crossings between $s_i$ and $s_j$ of $B$ by $M(i,j)-1$ half twists on $B$, as shown in Figure \ref{fig-replace}. 
We note that this replacememt does not change the braid permutation because $M(i,j)-1$ is also an odd number. 
Thus, we obtain another pure braid diagram $B'$, which satisfies $N(B')=M$. 
\end{proof}
\begin{figure}[ht]
\centering
\includegraphics[width=2cm]{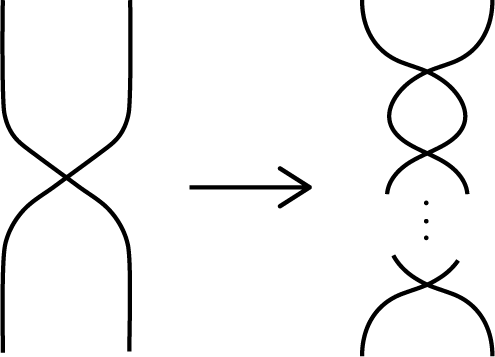}
\caption{Replacement of a crossing with half twists.}
\label{fig-replace}
\end{figure}

\subsection{BW-ladder diagram}
\label{sub02-ladder}

In this subsection, we introduce the ``BW-ladder diagram'' to discuss the CN-realizablity for $(0,2)$-matrices. 
We call a filled vertex a {\it black vertex} and an open vertex a {\it white vertex}. 
A {\it BW-ladder diagram} is a ladder-fashioned diagram consisting of some vertical segments and two types of horizontal edges, a {\it black edge} and a {\it white edge} which have black vertices and white vertices at the endpoints on vertical segments, respectively. 
In particular, we call a BW-ladder diagram with only black (resp.~white) edges a {\it B-ladder diagram} (resp.~{\it W-ladder diagram}). 
We consider white edges whose endpoints are on vertical segments which are next to each other only. 
We denote a black edge that has black vertices at the $i^{th}$ and $j^{th}$ vertical segments ($i<j$) from the left-hand side by $B^i_j$, as shown in Figure \ref{fig-BW}. 
Similarly, we denote a white edge that has white vertices at the $i^{th}$ and $(i+1)^{th}$ vertical segments by $W^i_{i+1}$. 
Each BW-ladder diagram can be represented by a word of $B^i_j$ and $W^i_{i+1}$. \\
\begin{figure}[ht]
\centering
\includegraphics[width=5cm]{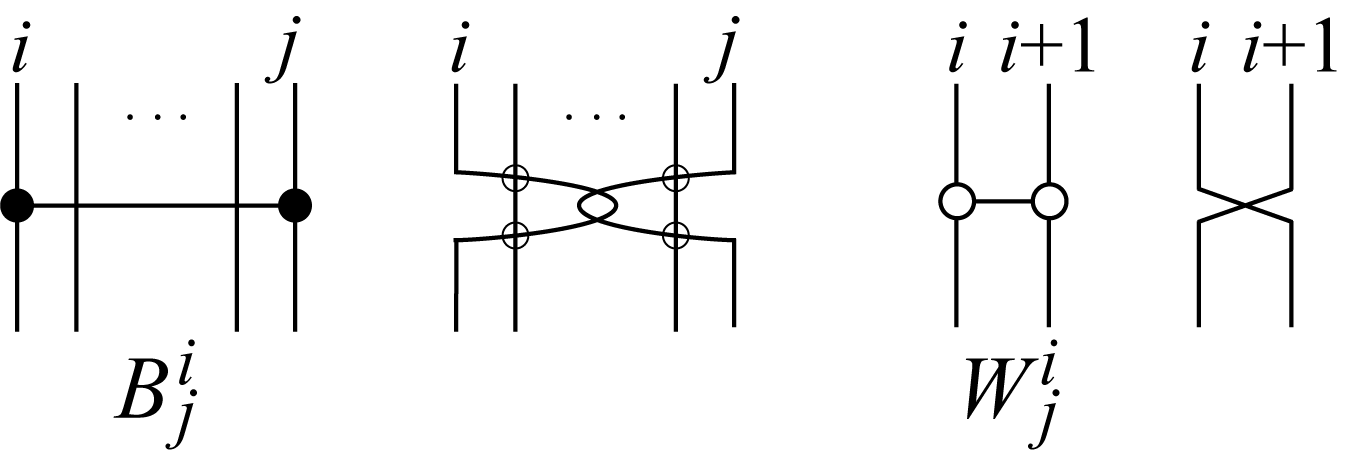}
\caption{A black edge $B^i_j$ can be considered as a hook. A white edge $W^i_{i+1}$ can be considered as a half twist. Undesired crossings are marked with a small circle. }
\label{fig-BW}
\end{figure}
For an $n \times n$ strictly upper triangular $(0,2)$-matrix $M$, a {\it B-ladder diagram of $M$} is a B-ladder diagram with $n$ vertical segments that has a black edge $B^i_j$ if $M(i,j)=2$. 
Considering $B^i_j$ as a hook of the two segments where $B^i_j$ has roots, as shown in the left-hand side in Figure \ref{fig-BW}, we can restore $M$ from the B-ladder diagram. 
Moreover, considering $W^i_{i+1}$ as a half-twist of the segments where $W^i_{i+1}$ has roots, as shown in Figure \ref{fig-BW}, we consider the following local moves.

\begin{definition}
A {\it ladder move} is one of the following moves L1 to L9 on a BW-ladder diagram. (See Figure \ref{fig-l-move}.) 
\begin{itemize}
\item[(L1)] $B^i_{i+1} \leftrightarrow W^i_{i+1} W^i_{i+1}$  for any $i$. 
\item[(L2)] $B^i_j B^k_l \leftrightarrow B^k_l B^i_j$ for any $i<j, \ k<l$. 
\item[(L3)] $W^k_{k+1} W^l_{l+1} \leftrightarrow W^l_{l+1} W^k_{k+1}$ when $k+1<l$ or $l+1<k$. 
\item[(L4)] $W^i_{i+1} W^{i+1}_{i+2} W^i_{i+1} \leftrightarrow W^{i+1}_{i+2} W^i_{i+1} W^{i+1}_{i+2}$ for any $i$. 
\item[(L5)] $B^i_j W^k_{k+1} \leftrightarrow W^k_{k+1} B^i_j$ when $j<k$, $k+1<i$ or $i<k<k+1<j$.
\item[(L6)] $B^i_j W^i_{i+1} \leftrightarrow W^i_{i+1} B^{i+1}_j$ when $i+1 < j$.
\item[(L7)] $W^i_{i+1} B^i_j \leftrightarrow B^{i+1}_j W^i_{i+1}$ when $i+1 < j$.
\item[(L8)] $B^i_j W^{j-1}_j \leftrightarrow W^{j-1}_j B^i_{j-1}$ when $i < j-1$.
\item[(L9)] $W^{j-1}_j B^i_j \leftrightarrow B^i_{j-1} W^{j-1}_j$ when $i < j-1$.
\end{itemize}
\begin{figure}[ht]
\centering
\includegraphics[width=13cm]{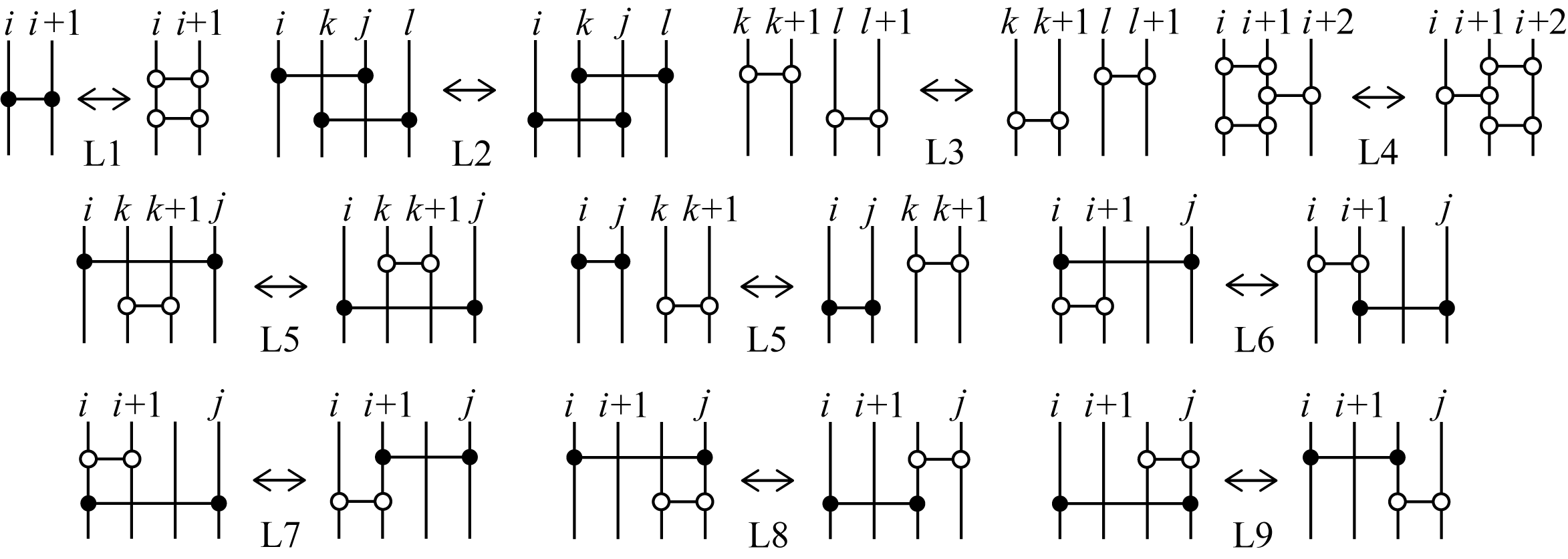}
\caption{Ladder moves.}
\label{fig-l-move}
\end{figure}
\end{definition}

\noindent The ladder move L1 means the replacement of a hook of two segments that are next to each other and a pair of half-twists between them. 
L2 means the change of the order of the black edges. 
L3 and L4 mean the transformations based on the braid relation. 
L5 means the change of the order of a black edge and a white edge whose endpoints are all different. 
L6 to L9 mean the transformation that a root of a hook moves along a half-twist, as shown in Figure \ref{fig-l6}.
\begin{figure}[ht]
\centering
\includegraphics[width=7.5cm]{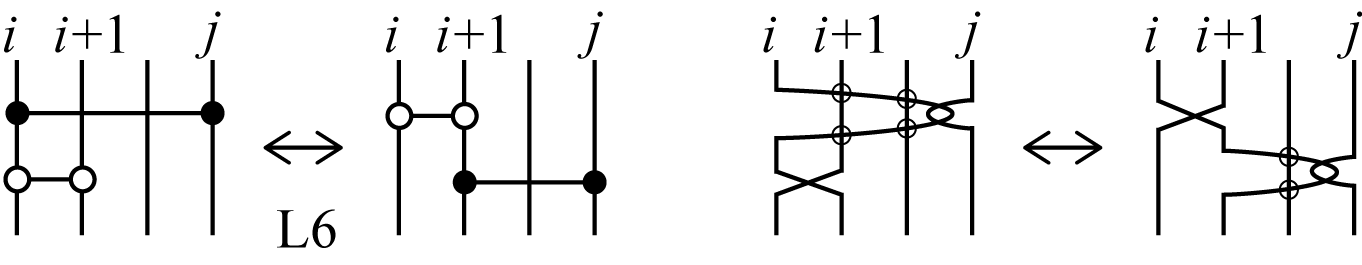}
\caption{The ladder move $L6$.}
\label{fig-l6}
\end{figure}
Since a W-ladder diagram implies a braid diagram (without crossing information), we have the following proposition. 

\begin{proposition}
A strictly upper triangular $(0,2)$-matrix $M$ is CN-realizable if a B-ladder diagram of $M$ can be transformed into a W-ladder diagram by a finite sequence of ladder moves.
\end{proposition}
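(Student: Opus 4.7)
The plan is to introduce, for each BW-ladder diagram $D$ on $n$ vertical segments, a strictly upper triangular $n\times n$ bookkeeping matrix $\widetilde{N}(D)$, to verify that $\widetilde{N}$ is invariant under every ladder move L1--L9, and to check that it specializes correctly at the two endpoints of the transformation. Concretely, read $D$ from top to bottom while maintaining a permutation $\sigma\in S_n$, initially the identity, that records which strand currently occupies each vertical segment, and starting from the zero matrix perform the updates: at a black edge $B^i_j$, add $2$ to the entry of $\widetilde{N}(D)$ indexed by the unordered pair $\{\sigma(i),\sigma(j)\}$ and leave $\sigma$ fixed; at a white edge $W^i_{i+1}$, add $1$ to the entry indexed by $\{\sigma(i),\sigma(i+1)\}$ and then swap $\sigma(i)\leftrightarrow\sigma(i+1)$.

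Two specializations are then essentially immediate. If $D$ is the B-ladder diagram of $M$, then $\sigma$ never changes and each $B^i_j$ of $D$ contributes $2$ to the $(i,j)$-entry exactly as prescribed by $M$, so $\widetilde{N}(D)=M$. If $D'$ is a W-ladder diagram, then interpreting each $W^i_{i+1}$ as a half-twist realizes $D'$ as a genuine braid diagram $\beta(D')$, and the bookkeeping above is precisely the geometric crossing count between each pair of strands of $\beta(D')$; therefore $\widetilde{N}(D')=N(\beta(D'))$. Once invariance of $\widetilde{N}$ under ladder moves is established, any sequence of ladder moves taking the B-ladder of $M$ to a W-ladder $D'$ gives $N(\beta(D'))=\widetilde{N}(D')=\widetilde{N}(\text{B-ladder of }M)=M$, so $\beta(D')$ realizes $M$ as its CN matrix.

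The only substantive work is the invariance check for each of L1--L9. Moves L2, L3 and L5 involve edges on disjoint segments, so both the contributions to $\widetilde{N}$ and the permutation updates commute; L4 is the braid relation on three consecutive segments; and L1 matches the single contribution of $B^i_{i+1}$ on adjacent strands with the net contribution of two consecutive half-twists on the same pair (the intermediate swap being cancelled). The delicate cases are L6--L9, where a hook slides past an adjacent half-twist and the segment labels at the hook's endpoints must shift by one to compensate for the swap of strand identities induced by the white edge; for example, in L6 the pair $\{\sigma(i),\sigma(j)\}$ acted on by $B^i_j$ on the left equals the pair $\{\sigma'(i+1),\sigma'(j)\}$ acted on by $B^{i+1}_j$ on the right, where $\sigma'$ is $\sigma$ after the swap, and analogous identifications handle L7--L9. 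The main obstacle is therefore not conceptual but the careful bookkeeping on two or three segments for each of L6--L9; once these are checked, the proposition follows.
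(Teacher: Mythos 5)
Your proposal is correct and follows the same idea the paper relies on: the paper states this proposition without an explicit proof, justifying it only by the interpretation of black edges as hooks and white edges as half-twists, and your bookkeeping matrix $\widetilde{N}(D)$ together with the move-by-move invariance check (L1--L9) is precisely the formalization of that argument, with the specializations at the B-ladder and W-ladder endpoints handled correctly. The only slight imprecision is describing L2 as involving ``disjoint segments'' --- the two black edges in L2 may share endpoints --- but since black edges never alter the permutation $\sigma$, their contributions commute regardless, so the argument stands.
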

 
\begin{example}
The $5 \times 5$ $(0,2)$-matrix shown in the left-hand side in Figure \ref{fig-l-ex} is CN-realizable because the B-ladder diagram can be transformed into a W-ladder diagram by ladder moves. 
\begin{figure}[ht]
\centering
\includegraphics[width=13cm]{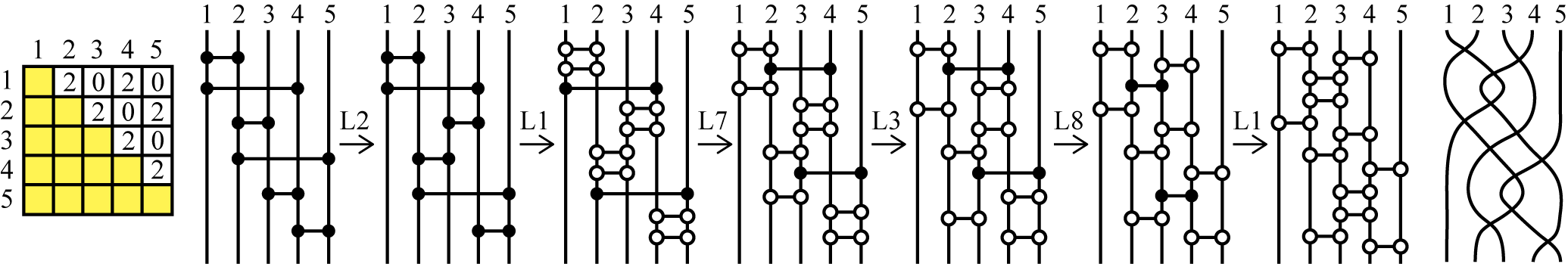}
\caption{A sequence of ladder moves from a B-ladder diagram to a W-ladder diagram. }
\label{fig-l-ex}
\end{figure}
\label{ex-D}
\end{example}

\noindent We have the following proposition. 

\begin{proposition}
The BW-ladder diagram $B^{l-1}_{l} B^{l-2}_{l} B^{l-3}_{l} \dots B^{k+1}_{l} B^k_l$ can be transformed into $W^{l-1}_{l} W^{l-2}_{l-1} W^{l-3}_{l-2} \dots W^{k+1}_{k+2} W^{k}_{k+1} W^{k}_{k+1} W^{k+1}_{k+2} \dots W^{l-3}_{l-2} W^{l-2}_{l-1} W^{l-1}_{l}$ 
by ladder moves. 
\label{prop-bw}
\end{proposition}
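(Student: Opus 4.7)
The plan is to prove the statement by induction on $l-k$, the number of black edges in the word.

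For the base case $l=k+1$, the word consists of the single edge $B^{k}_{k+1}$ on adjacent columns, and L1 directly rewrites it as $W^{k}_{k+1}W^{k}_{k+1}$, which matches the target expression. For the inductive step, I would apply the inductive hypothesis to the prefix $B^{l-1}_{l}B^{l-2}_{l}\dots B^{k+1}_{l}$, rewriting the full word as
\[
W^{l-1}_{l}W^{l-2}_{l-1}\dots W^{k+2}_{k+3}W^{k+1}_{k+2}W^{k+1}_{k+2}W^{k+2}_{k+3}\dots W^{l-2}_{l-1}W^{l-1}_{l}\,B^{k}_{l}.
\]
The remaining task is to migrate the trailing $B^{k}_{l}$ leftward into the center of the word. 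The key tool is ladder move L9, $W^{j-1}_{j}B^{i}_{j}\leftrightarrow B^{i}_{j-1}W^{j-1}_{j}$ (valid when $i<j-1$), which carries a black edge past an adjacent white edge and decreases its upper subscript by one.

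I would apply L9 with $i=k$ successively, first to $W^{l-1}_{l}B^{k}_{l}$, then to $W^{l-2}_{l-1}B^{k}_{l-1}$, continuing through the edges $W^{l-3}_{l-2},W^{l-4}_{l-3},\ldots,W^{k+1}_{k+2}$. After $l-k-1$ such applications, the original $B^{k}_{l}$ has been rewritten as $B^{k}_{k+1}$ sitting between the two middle copies of $W^{k+1}_{k+2}$. The applicability hypothesis $i<j-1$ of L9 is satisfied at every step, since $k<k+1\leq j-1$ whenever $j\geq k+2$. Finally, $B^{k}_{k+1}$ lives on adjacent columns, and one application of L1 replaces it with $W^{k}_{k+1}W^{k}_{k+1}$, producing exactly the target word and completing the induction.

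I do not anticipate a serious obstacle: the argument is essentially a telescoping combination of L9 and L1, and the only thing requiring care is to verify that each L9 invocation has $i<j-1$, which follows immediately from the indices involved. The main subtlety worth highlighting in the writeup is the bookkeeping of which white edges the migrating $B^{k}$ is passing through, since by the inductive hypothesis the word already contains symmetric nested pairs of white edges centered on the duplicated $W^{k+1}_{k+2}$ in the middle.
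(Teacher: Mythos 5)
Your proof is correct. It relies on exactly the same two moves as the paper's argument (L1 to split an adjacent-column black edge into a pair of white edges, and L9 to slide a black edge past a white edge while decrementing its upper index), but the recursion is organized dually. The paper peels off the \emph{first} (shortest) black edge $B^{l-1}_l$, converts it to $W^{l-1}_l W^{l-1}_l$ by L1, and pushes one copy of $W^{l-1}_l$ rightward through \emph{all} the remaining black edges via L9, turning every $B^m_l$ into $B^m_{l-1}$ in one sweep before repeating the procedure on the inner word. You instead peel off the \emph{last} (longest) black edge $B^k_l$: the inductive hypothesis converts the prefix into the nested white palindrome, and you then thread $B^k_l$ leftward through the right half of that palindrome one white edge at a time until it becomes $B^k_{k+1}$ at the centre, where L1 finishes it off. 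Both routes are valid and of comparable length; yours has the mild advantage of being a clean formal induction with the L9 side condition checked once and for all ($i<j-1$ holds since $j\geq k+2$ at every step), whereas the paper's ``repeat this procedure'' requires tracking how all remaining black edges are shortened simultaneously at each stage. Your closing remark about bookkeeping is the right thing to emphasize: the only real content beyond the base case is identifying which copy of each $W^{j-1}_j$ the migrating black edge passes (the right-half copies, in right-to-left order), and your write-up handles that correctly.
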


\begin{proof}
On $B^{l-1}_{l} B^{l-2}_{l} B^{l-3}_{l} \dots B^{k+1}_{l} B^k_l$, replace $B^{l-1}_l$ with $W^{l-1}_l W^{l-1}_l$ by the ladder move L1. 
Then $W^{l-1}_l W^{l-1}_l B^{l-2}_l B^{l-3}_l \dots B^{k+1}_l B^{k}_l$ is transformed into \\
$W^{l-1}_l B^{l-2}_{l-1} B^{l-3}_{l-1} \dots B^{k+1}_{l-1} B^{k}_{l-1} W^{l-1}_l$ by applying L9 repeatedly. 
Replace $B^{l-2}_{l-1}$ with $W^{l-2}_{l-1} W^{l-2}_{l-1}$ by L1. 
Then we have $W^{l-1}_l W^{l-2}_{l-1} B^{l-3}_{l-2} \dots B^{k+1}_{l-2} B^{k}_{l-2} W^{l-2}_{l-1} W^{l-1}_l$ by L9. 
Repeat this procedure until $B^k_{k+1}$ is replaced with $W^k_{k+1} W^k_{k+1}$. 
(See Figure \ref{fig-lad}.)
\end{proof}
\begin{figure}[ht]
\centering
\includegraphics[width=9cm]{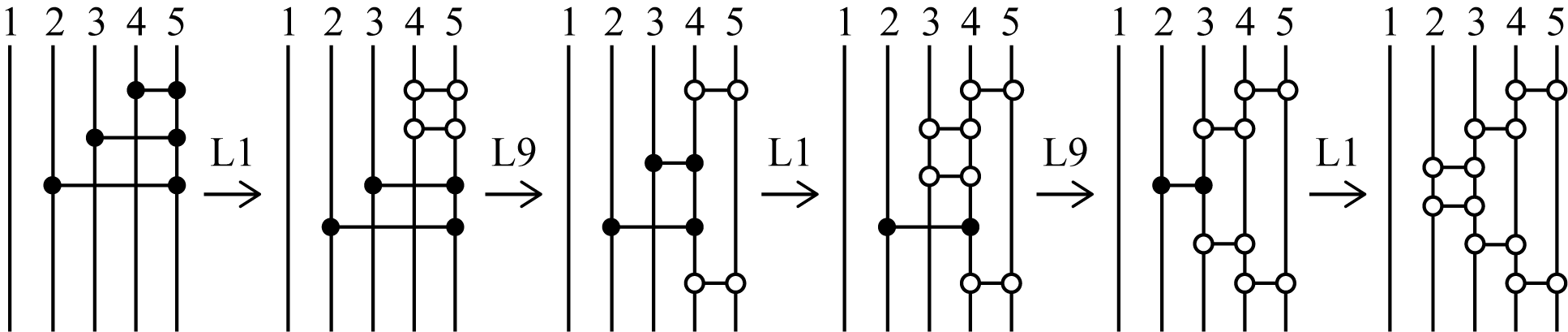}
\caption{A sequence from $B^4_5 B^3_4 B^2_5$ to $W^4_5 W^3_4 W^2_3 W^2_3 W^3_4 W^4_5$.}
\label{fig-lad}
\end{figure}

\noindent From Proposition \ref{prop-bw}, we have the following corollary. 

\begin{corollary}
Let $M$ be an $n \times n$ $(0,2)$-matrix satisfying the following condition for some $k$ with $1 \leq k \leq n$: 
\begin{align*}
& M(i,j)=0 \text{ for } 1 \leq j \leq n-1 \\
& M(i,n)= 
\left\{
\begin{array}{ll}
0 \text{ for } 1 \leq i \leq k-1 \text{ or } i=n \\
2 \text{ for } k \leq i \leq n-1
\end{array}
\right.
\end{align*}
Then $M$ is CN-realizable. 
\label{cor-column}
\end{corollary}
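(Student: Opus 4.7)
The plan is to observe that this corollary is essentially a direct translation of Proposition \ref{prop-bw} in the special case $l = n$. Under the hypotheses of the corollary, the only nonzero entries of the strictly upper triangular matrix $M$ sit at positions $(k,n), (k+1,n), \dots, (n-1,n)$. Thus the B-ladder diagram of $M$ has precisely the black edges $B^k_n, B^{k+1}_n, \dots, B^{n-1}_n$, all of which share the right endpoint at the $n$-th vertical segment.

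Since black edges commute by ladder move L2, the B-ladder diagram of $M$ can be represented, up to ladder moves, by the word
\[
B^{n-1}_n B^{n-2}_n B^{n-3}_n \cdots B^{k+1}_n B^k_n.
\]
This is exactly the word to which Proposition \ref{prop-bw} (with $l = n$) applies. So by Proposition \ref{prop-bw}, this B-ladder diagram can be transformed by a finite sequence of ladder moves into the W-ladder diagram
\[
W^{n-1}_n W^{n-2}_{n-1} \cdots W^{k+1}_{k+2} W^k_{k+1} W^k_{k+1} W^{k+1}_{k+2} \cdots W^{n-2}_{n-1} W^{n-1}_n.
\]
By the proposition immediately preceding Example \ref{ex-D} (a W-ladder diagram encodes a braid diagram whose CN matrix is the matrix encoded by the starting B-ladder diagram), this immediately yields a pure braid diagram realizing $M$, so $M$ is CN-realizable.

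There is no real obstacle here, since the corollary is essentially a repackaging of Proposition \ref{prop-bw}: the only small verification needed is that the ladder-diagram word with which Proposition \ref{prop-bw} starts coincides, up to the commutation move L2, with the B-ladder diagram of $M$. Everything else is carried by the already-established Proposition \ref{prop-bw}.
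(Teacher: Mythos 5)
Your proof is correct and follows the same route as the paper: the paper's own proof simply notes that $M$ has the B-ladder diagram of Proposition \ref{prop-bw} with $l=n$ and applies that proposition. Your additional remark about reordering the black edges via L2 is a harmless (and slightly more careful) elaboration of the same argument.
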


\begin{proof}
The matrix $M$ has the B-ladder diagram of Proposition \ref{prop-bw} with $l=n$, which can be transformed into a W-ladder diagram by ladder moves. 
\end{proof}

\noindent From Propositions \ref{prop-plus1}, \ref{prop-block} and Corollary \ref{cor-column}, we have the following corollary. 

\begin{corollary}
Let $M_1$ be a CN-realizable $n \times n$ strictly upper triangular $(0,2)$-matrix. 
Then, an $(n+1) \times (n+1)$ matrix $M$ satisfying the following condition for some $k$ with $1 \leq k \leq n$ is also CN-realizable. 
\begin{align*}
& M(i,j)=M_1(i,j) \text{ for } 1 \leq j \leq n \\
& M(i,n+1)= 
\left\{
\begin{array}{ll}
0 \text{ for } 1 \leq i \leq k-1 \text{ or } i=n+1 \\
2 \text{ for } k \leq i \leq n
\end{array}
\right.
\end{align*}
\label{cor-n+1}
\end{corollary}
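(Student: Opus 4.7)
The plan is to assemble $M$ as a sum of two CN-realizable $(0,2)$-matrices of size $(n+1)\times(n+1)$, then invoke Proposition \ref{prop-block}. Since every ingredient is already built, the argument is essentially a bookkeeping composition of Propositions \ref{prop-plus1} and \ref{prop-block} with Corollary \ref{cor-column}.

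First, I would extend $M_1$ to an $(n+1)\times(n+1)$ matrix $\widetilde{M_1}$ by adjoining a zero row and a zero column on the right and bottom, i.e.\ set $\widetilde{M_1}(i,j)=M_1(i,j)$ for $1\leq i,j\leq n$ and $\widetilde{M_1}(i,n+1)=\widetilde{M_1}(n+1,j)=0$ for all $i,j$. By Proposition \ref{prop-plus1}, $\widetilde{M_1}$ is CN-realizable, and it is clearly still a $(0,2)$-matrix.

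Next, I would define an $(n+1)\times(n+1)$ matrix $M_2$ whose entries vanish except in the last column, where $M_2(i,n+1)=2$ precisely for $k\leq i\leq n$. Corollary \ref{cor-column}, applied with $n$ replaced by $n+1$ and the given $k$, tells us $M_2$ is CN-realizable.

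Finally, both $\widetilde{M_1}$ and $M_2$ are $(n+1)\times(n+1)$ $(0,2)$-matrices that are CN-realizable, so Proposition \ref{prop-block} gives that $\widetilde{M_1}+M_2$ is CN-realizable. A direct entry-by-entry check shows $\widetilde{M_1}+M_2=M$, completing the proof. There is no real obstacle here; the only thing to be mildly careful about is verifying that the zero row/column added to $M_1$ does not collide with the nonzero last column of $M_2$, but this is automatic since $\widetilde{M_1}$ has all zeros in its last column.
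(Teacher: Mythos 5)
Your proof is correct and matches the paper's intended argument exactly: the paper derives this corollary precisely by combining Proposition \ref{prop-plus1} (to pad $M_1$ to size $n+1$), Corollary \ref{cor-column} (for the last-column matrix), and Proposition \ref{prop-block} (to add the two CN-realizable $(0,2)$-matrices). No issues.
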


\begin{example}
The $6 \times 6$ matrix 
\begin{align*}
M=
\begin{pmatrix}
0 & 2 & 0 & 2 & 0 & 0 \\
0 & 0 & 2 & 0 & 2 & 0 \\
0 & 0 & 0 & 2 & 0 & 2 \\
0 & 0 & 0 & 0 & 2 & 2 \\
0 & 0 & 0 & 0 & 0 & 2 \\
0 & 0 & 0 & 0 & 0 & 0 \\
\end{pmatrix}
\end{align*}
is CN-realizable because the $5 \times 5$ matrix from the first to the fifth rows and columns is CN-realizable as shown in Figure \ref{fig-l-ex} and the sixth column meets the condition of Corollary \ref{cor-n+1} with $n=5$, $k=3$. 
\end{example}

\noindent We also have the following proposition.

\begin{proposition}
Any $n \times n$ T0 upper triangular $(0,2)$-matrix with $M(i,j)=0$ for $j-i \geq 3$ is CN-realizable. 
\label{prop-ji3}
\end{proposition}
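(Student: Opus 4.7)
The plan is to induct on $n$ and, at each step, find a CN-realizable $(0,2)$-matrix $P$ whose support is contained in that of $M$, so that $M-P$ is again T0, still satisfies $(M-P)(i,j)=0$ for $j-i\geq 3$, and additionally has $(M-P)(i,n)=0$ for every $i$. Proposition \ref{prop-block} will then give $M=(M-P)+P$ as CN-realizable once $M-P$ is, and since the last column of $M-P$ vanishes, Proposition \ref{prop-plus1} will reduce the job to an $(n-1)\times(n-1)$ matrix satisfying the same hypotheses, closing the induction. The base cases $n\leq 3$ are immediate from L1 and Corollary \ref{cor-column} combined with the reversal of Proposition \ref{prop-order-rev}.

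\textbf{Choosing $P$.} I would split on the pair $(M(n-2,n),M(n-1,n))$. If both vanish, take $P=0$. If $M(n-1,n)=2$, take $P$ to be the column piece $B^{n-1}_n$ or $B^{n-2}_nB^{n-1}_n$ according as $M(n-2,n)$ is $0$ or $2$; both are CN-realizable by Corollary \ref{cor-column} and leave $M-P$ with zero last column. The delicate case is $M(n-2,n)=2$ together with $M(n-1,n)=0$: T0 at the triple $(n-2,n-1,n)$ then forces $M(n-2,n-1)=2$, and I would branch once more on $M(n-3,n-1)$.

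\textbf{The two branches.} If $M(n-3,n-1)=0$, let $P=\{(n-2,n-1),(n-2,n)\}$ be the row piece, CN-realizable as the reverse of a column piece via Propositions \ref{prop-order-rev} and \ref{prop-bw}; the only new T0 triple to inspect in $M-P$ is $(n-3,n-2,n-1)$, and it is safe because $M'(n-3,n-1)=0$. If $M(n-3,n-1)=2$, let $P=\{(n-3,n-1),(n-2,n-1),(n-2,n)\}$. I would establish its CN-realizability by transforming the BW-ladder $B^{n-3}_{n-1}B^{n-2}_{n-1}B^{n-2}_n$, via a sequence of ladder moves (L2, L1, L9, L7, L1), into the W-ladder $W^{n-2}_{n-1}W^{n-3}_{n-2}W^{n-3}_{n-2}W^{n-1}_nW^{n-1}_nW^{n-2}_{n-1}$, equivalently the braid $\sigma_{n-2}\sigma_{n-3}^{2}\sigma_{n-1}^{2}\sigma_{n-2}$ on strands $n-3,\dots,n$ padded with trivial strands. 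Then T0 of $M-P$ is checked triple by triple; the only delicate spot is the triple $(n-4,n-3,n-2)$ in the sub-sub-case $M(n-3,n-2)=0$, where the T0 property of the original $M$ applied at that very triple already forces $M(n-4,n-2)=0$, precisely what is needed.

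\textbf{Main obstacle.} The crux will be the last sub-case. A naive row- or column-piece peeling would typically leave a T0 violation, so one is forced to widen to the three-entry block above. Establishing its CN-realizability via the explicit BW-ladder transformation is the genuine new input not formally supplied by the earlier propositions and corollaries; verifying carefully that each sub-sub-case still has T0 preserved in the remainder is the most error-prone part of the argument.
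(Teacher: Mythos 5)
Your argument is correct, but it is organized quite differently from the paper's. The paper gives a single global construction: it lists all black edges of $M$ in the interleaved order $B^1_2 B^1_3 B^2_3 B^2_4 B^3_4\cdots$, converts every adjacent hook $B^k_{k+1}$ into $W^k_{k+1}W^k_{k+1}$ by L1, and then observes that T0 guarantees each surviving distance-two hook $B^k_{k+2}$ sits next to at least one of the half-twists $W^k_{k+1}$ or $W^{k+1}_{k+2}$, along which its root can be slid by L7 or L8 to become an adjacent hook, finished off by L1. You instead run an induction on $n$, peeling off a small CN-realizable piece $P$ supported near the last column so that $M-P$ stays T0 and banded with vanishing last column, and then invoke Propositions \ref{prop-block} and \ref{prop-plus1}. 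The one genuinely new computation you need --- the ladder reduction of $B^{n-3}_{n-1}B^{n-2}_{n-1}B^{n-2}_{n}$ to $W^{n-2}_{n-1}W^{n-3}_{n-2}W^{n-3}_{n-2}W^{n-1}_{n}W^{n-1}_{n}W^{n-2}_{n-1}$, i.e.\ the pure braid $\sigma_{n-2}\sigma_{n-3}^{2}\sigma_{n-1}^{2}\sigma_{n-2}$ --- checks out, and is in effect a local instance of the paper's sliding trick. Your T0 bookkeeping is also sound, and in fact easier than you suggest: in the last branch the columns $n-1$ and $n$ of $M-P$ both vanish and the remaining block is untouched, so every T0 triple is either vacuous or inherited from $M$; the triple $(n-4,n-3,n-2)$ you single out involves only unchanged entries. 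What the paper's route buys is uniformity and brevity (no induction, no case split); what yours buys is that it leans almost entirely on the already-established building blocks (Corollary \ref{cor-column} and Propositions \ref{prop-order-rev}, \ref{prop-block}, \ref{prop-plus1}) and makes the role of the T0 hypothesis visible entry by entry.
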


\begin{proof}
Suppose the B-ladder diagram with the following order of black edges, 
$$B^1_2 B^1_3 B^2_3 B^2_4 B^3_4 \dots B^{k}_{k+1} B^{k}_{k+2} B^{k+1}_{k+2} \dots B^{n-3}_{n-1} B^{n-2}_{n-1} B^{n-2}_{n} B^{n-1}_{n},$$
where we ignore $B^i_j$ if $M(i,j)=0$. 
By the ladder move L1, we have 
$$W^1_2 W^1_2 B^1_3 W^2_3 W^2_3 B^2_4 B^3_4 \dots W^{k}_{k+1} W^{k}_{k+1} B^{k}_{k+2} W^{k+1}_{k+2} W^{k+1}_{k+2} \dots B^{n-2}_{n} W^{n-1}_{n} W^{n-1}_{n}.$$
Since $M$ is T0, if $B^k_{k+2}$ exists in the sequence, namely $M(k, k+2)=2$, then at least one of $W^k_{k+1}$ or $W^{k+1}_{k+2}$ exists in the sequence. 
Use the white edge, which is next to $B^k_{k+2}$, to apply L7 or L8 and obtain $B^{k+1}_{k+2}$ or $B^k_{k+1}$. 
By L1, then, we obtain a W-ladder diagram. 
\end{proof}

\noindent We have the following corollary. 

\begin{corollary}
Any $n \times n$ $(0,2)$-matrix $M$ with $M(i,j)=0$ for $j-i \geq 2$ is CN-realizable. 
\label{cor-ji2}
\end{corollary}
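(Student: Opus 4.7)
The plan is to observe that this corollary is essentially a direct specialization of Proposition \ref{prop-ji3}, since the hypothesis $M(i,j)=0$ for $j-i\geq 2$ is strictly stronger than the hypothesis $M(i,j)=0$ for $j-i\geq 3$ required there. So the only thing I need to verify to invoke Proposition \ref{prop-ji3} is that the T0 hypothesis is automatically satisfied.

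First I would note that $M$ is implicitly strictly upper triangular in this section (by the convention stated at the start of Section \ref{section-02pure}), so the T0 condition to check is: for all $1\leq i<j<k\leq n$, if $M(i,j)=M(j,k)=0$ then $M(i,k)=0$. Under the hypothesis $M(i,j)=0$ for $j-i\geq 2$, and since $i<j<k$ forces $k-i\geq 2$, we get $M(i,k)=0$ automatically, with no use of $M(i,j)$ or $M(j,k)$. Hence $M$ is T0 for free.

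Having verified the T0 hypothesis, I would then apply Proposition \ref{prop-ji3} to conclude that $M$ is CN-realizable. Since all of the real work (the ladder-diagram construction, the use of the moves L1, L7, L8 to eliminate the "distance-2" black edges $B^k_{k+2}$, and the final reduction to a W-ladder diagram) was carried out in the proof of Proposition \ref{prop-ji3}, there is nothing further to argue.

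Because of this, there is no genuine obstacle here: the corollary is essentially a bookkeeping remark pointing out that when only "distance-1" nonzero entries are allowed, the T0 hypothesis of Proposition \ref{prop-ji3} can be dropped because it holds vacuously. The only thing to be careful about is the convention that CN matrices are presented as strictly upper triangular in this section; otherwise one would have to also check T0 with respect to indices reading below the diagonal, but the convention removes this issue.
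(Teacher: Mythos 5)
Your proposal is correct and matches the paper's proof exactly: both reduce to Proposition \ref{prop-ji3} after observing that the T0 condition holds vacuously, since $i<j<k$ forces $k-i\geq 2$ and hence $M(i,k)=0$ by hypothesis. Your write-up just spells out this verification in slightly more detail than the paper's one-line argument.
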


\begin{proof}
Since any $n \times n$ matrix $M$ such that $M(i,j)=0$ for $j-i \geq 2$ is T0, it follows from Proposition \ref{prop-ji3}. 
\end{proof}

\section{CN matrices of pure braid diagrams up to $5 \times 5$}
\label{section-5b}

We investigate the $(0,2)$-CN matrices of $2 \times 2$ to $4 \times 4$ in Section \ref{subch-2} and $5 \times 5$ in Section \ref{subch-5}.

\subsection{$(0,2)$-CN matrices of $2 \times 2$ to $4 \times 4$}
\label{subch-2}

For $2 \times 2$ and $3 \times 3$ matrices, we have the following proposition. 

\begin{proposition}
Every $2 \times 2$ or $3 \times 3$ T0 upper triangular $(0,2)$-matrix $M$ is CN-realizable. 
\label{prop-N23}
\end{proposition}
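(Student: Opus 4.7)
The plan is to derive the statement as an immediate special case of Proposition \ref{prop-ji3}, which asserts that every $n \times n$ T0 upper triangular $(0,2)$-matrix satisfying $M(i,j)=0$ for all $j-i \geq 3$ is CN-realizable. For $n \leq 3$, any index pair $(i,j)$ with $1 \leq i < j \leq n$ satisfies $j - i \leq n - 1 \leq 2$, so the column-distance condition ``$M(i,j) = 0$ when $j - i \geq 3$'' is vacuously satisfied by every such matrix. Consequently, Proposition \ref{prop-ji3} applies directly, and every $2 \times 2$ or $3 \times 3$ T0 upper triangular $(0,2)$-matrix is CN-realizable.

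For completeness, one can also verify the proposition by direct enumeration. The $2 \times 2$ case contains only the zero matrix, realized by the trivial 2-braid, and the matrix with $M(1,2) = 2$, realized by two consecutive crossings between the two strands. The $3 \times 3$ case has $2^3 - 1 = 7$ T0 matrices, the only excluded configuration being $M(1,2) = M(2,3) = 0$ with $M(1,3) = 2$. Each of the three atomic T0 matrices with a single nonzero entry is CN-realizable: those with $M(1,2) = 2$ or $M(2,3) = 2$ come from two adjacent-strand crossings, while the one with $M(1,3) = 2$ is realized by the B-ladder $B^1_3$ converted to a W-ladder via Proposition \ref{prop-bw} with $k = 1$, $l = 3$. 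The remaining four T0 matrices are sums of these atoms and hence CN-realizable by Proposition \ref{prop-block}.

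Because the statement follows directly from previously established machinery, there is no genuine obstacle; the only step requiring attention is noticing that the column-distance hypothesis in Proposition \ref{prop-ji3} disappears entirely when $n \leq 3$, reducing the proof to a one-line observation.
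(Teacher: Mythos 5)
Your main argument is exactly the paper's proof: Proposition \ref{prop-N23} is deduced in one line from Proposition \ref{prop-ji3}, whose hypothesis $M(i,j)=0$ for $j-i\geq 3$ is vacuous when $n\leq 3$. That part is correct and complete.

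However, your supplementary enumeration contains an error you should fix or delete: the $3\times 3$ matrix with $M(1,3)=2$ as its only nonzero entry is precisely the excluded non-T0 configuration ($M(1,2)=M(2,3)=0$ but $M(1,3)\neq 0$), so it is not one of the seven T0 matrices, and by Proposition \ref{prop-adequate} it is not CN-realizable at all; moreover Proposition \ref{prop-bw} with $k=1$, $l=3$ applies to the word $B^2_3B^1_3$, not to the single edge $B^1_3$. Since your decomposition of the remaining T0 matrices into ``atoms'' uses this invalid atom (e.g.\ $M(1,2)=M(1,3)=2$, $M(2,3)=0$ would need it), the enumeration as written does not go through; the clean route for those cases is simply the first paragraph.
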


\begin{proof}
It follows from Proposition \ref{prop-ji3}. 
\end{proof}

\noindent For $4 \times 4$ matrices, we have the following proposition. 

\begin{proposition}
Every $4 \times 4$ T0 upper triangular $(0,2)$-matrix $M$ is CN-realizable. 
\label{prop-44}
\end{proposition}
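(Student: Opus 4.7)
The plan is to split on the value of $M(1,4)$, the only entry of a $4 \times 4$ strictly upper triangular matrix with $j - i = 3$. If $M(1,4) = 0$, then all nonzero entries satisfy $j - i \leq 2$, so Proposition \ref{prop-ji3} applies directly.

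When $M(1,4) = 2$, the T0 property applied to the four triples $(1,2,3)$, $(1,2,4)$, $(1,3,4)$, $(2,3,4)$ pins down the admissible configurations. In particular, $(1,2,4)$ and $(1,3,4)$ force at least one of $M(1,2), M(2,4)$ and at least one of $M(1,3), M(3,4)$ to equal $2$. Up to the reverse symmetry of Proposition \ref{prop-order-rev}, we identify three essentially different ``minimal'' T0 supports containing the entry $(1,4)$:
\begin{itemize}
\item[(A)] $\{(1,2),(1,3),(1,4)\}$, together with its reverse $\{(1,4),(2,4),(3,4)\}$;
\item[(B)] $\{(1,2),(3,4),(1,4)\}$ (self-reverse);
\item[(C)] $\{(1,3),(2,3),(2,4),(1,4)\}$ (self-reverse, with $M(2,3)=2$ forced by T0 on $(1,2,3)$).
\end{itemize}
For each minimal type we exhibit a realizing braid: (A) follows from Corollary \ref{cor-column} (with $n=4$, $k=1$) after applying Proposition \ref{prop-order-rev}; (B) is obtained from the ladder sequence $B^1_2 B^1_4 B^3_4 \to W^1_2 W^3_4 W^2_3 W^2_3 W^1_2 W^3_4$ produced by L1, L9, L7, L3, corresponding to the braid word $\sigma_1 \sigma_3 \sigma_2^2 \sigma_1 \sigma_3$; (C) is realized by $(\sigma_2 \sigma_1 \sigma_3 \sigma_2)^2$, a full twist between the strand pairs $\{s_1,s_2\}$ and $\{s_3,s_4\}$, which a direct crossing count shows contributes exactly $2$ to each of $M(1,3), M(1,4), M(2,3), M(2,4)$.

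For a general T0 matrix $M$ with $M(1,4) = 2$, we choose a realizable base $M_X$ of one of the above types whose support sits inside that of $M$ and for which the residual $M - M_X$ has $(M - M_X)(1,4) = 0$ and is still T0. Once this selection is made, $M - M_X$ is handled by Proposition \ref{prop-ji3} and recombined with $M_X$ via Proposition \ref{prop-block}. The main obstacle is this choice: the most natural base is not always correct---for instance, when $M(1,2) = M(3,4) = M(2,4) = M(1,4) = 2$ and $M(1,3) = M(2,3) = 0$, subtracting the type (B) base leaves only $M(2,4) = 2$, which violates T0 on the triple $(2,3,4)$, so one must instead use the reverse of (A). A finite case check of the T0 $(0,2)$-matrices with $M(1,4) = 2$ (parameterized by $(M(1,2), M(1,3), M(2,3), M(2,4), M(3,4))$ subject to the T0 constraints) confirms that a valid $M_X$ always exists.
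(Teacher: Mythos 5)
Your proof is correct, and at its core it uses the same mechanism as the paper's: peel off a small CN-realizable ``base'' matrix, check that the leftover is a T0 matrix supported near the diagonal (so Proposition~\ref{prop-ji3} or Corollary~\ref{cor-ji2} applies), and recombine with Proposition~\ref{prop-block}. The organization differs: the paper enumerates by the entire fourth column, disposing of four column types at once via Corollary~\ref{cor-n+1} and then treating four residual cases, whereas you enumerate by the single entry $M(1,4)$ and by minimal T0 supports containing $(1,4)$. Your bases in fact coincide with the paper's: your type (C) is exactly the base matrix of the paper's Case D, your type (B) is the Case C base with $\alpha=0$, and your types (A), (A$'$) are what the paper absorbs into Corollary~\ref{cor-n+1}; your explicit braid words $\sigma_1\sigma_3\sigma_2^2\sigma_1\sigma_3$ and $(\sigma_2\sigma_1\sigma_3\sigma_2)^2$ replace the paper's figures and are easily checked to give the claimed CN matrices. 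The one place you assert rather than argue is the final claim that a base with T0 residual always exists; this is true, and the check is short enough that you should write it out. Writing $a=M(1,2)$, $b=M(1,3)$, $c=M(2,3)$, $d=M(2,4)$, $e=M(3,4)$: if $a=b=2$ use (A) (the residual's only nontrivial T0 condition, at the triple $(2,3,4)$, is inherited from $M$); if $d=e=2$ use (A$'$) symmetrically; otherwise T0 at $(1,2,4)$ and $(1,3,4)$ forces either $a=2$, $b=0$, hence $e=2$, hence $d=0$, so the residual of (B) is supported on $(2,3)$ alone and is T0, or else $a=0$, $d=2$, hence $e=0$, hence $b=2$, and T0 at $(1,2,3)$ forces $c=2$, so (C) applies with zero residual. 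With that inserted, the argument is complete.
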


\begin{proof}
By Corollary \ref{cor-n+1} and Proposition \ref{prop-N23}, if a $4 \times 4$ T0 upper triangular $(0,2)$-matrix $M$ has the fourth column as $(0,0,0,0)^T$, $(0,0,2,0)^T$, $(0,2,2,0)^T$ or $(2,2,2,0)^T$, then $M$ is CN-realizable. 
Therefore, it is sufficient to check the following four cases, which is $2^3 -4$, Cases A, B, C, D that the fourth column is $(0,2,0,0)^T$, $(2,0,0,0)^T$, $(2,0,2,0)^T$, $(2,2,0,0)^T$, respectively. 
\begin{itemize}
\item[Case A:] Since the $(1,4)$ entry is $0$, it is CN-realizable by Proposition \ref{prop-ji3}. 

\item[Case B:] By T0, the $(1,2)$ and $(1,3)$ entries are $2$. 
Then the first row will be $(0,2,2,2)$. 
By Propositions \ref{prop-order-rev}, \ref{prop-N23} and Corollary \ref{cor-n+1}, it is CN-realizable. 

\item[Case C:] By T0, the $(1,2)$ entry is $2$. 
Divide the matrix as 
\begin{align*}
\begin{pmatrix}
0 & 2 & \alpha & 2 \\
0 & 0 & 0 & 0 \\
0 & 0 & 0 & 2 \\
0 & 0 & 0 & 0 \\
\end{pmatrix}
+
\begin{pmatrix}
0 & 0 & 0 & 0 \\
0 & 0 & \beta & 0 \\
0 & 0 & 0 & 0 \\
0 & 0 & 0 & 0 \\
\end{pmatrix} ,
\end{align*}
where $\alpha$ and $\beta$ are either $0$ or $2$. 
The first matrix is CN-realizable regardless of the value of $\alpha$, as shown in Figure \ref{fig-4C}. 
The second matrix is also CN-realizable regardless of $\beta$ by Corollary \ref{cor-ji2}. 
Hence, by Proposition \ref{prop-block}, the sum of them is CN-realizable, too. 
\begin{figure}[ht]
\centering
\includegraphics[width=12cm]{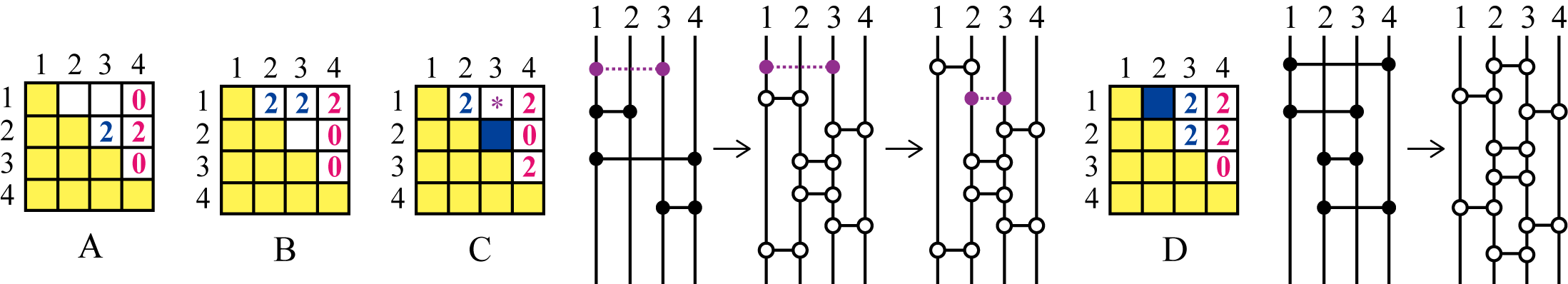}
\caption{Cases A to D. For Case C, ignore the purple-colored broken black edge when the $(1,3)$ entry is $0$.}
\label{fig-4C}
\end{figure}

\item[Case D:] By T0, the $(1,3)$ and $(2,3)$ entries are $2$. 
Divide the matrix as
\begin{align*}
\begin{pmatrix}
0 & 0 & 2 & 2 \\
0 & 0 & 2 & 2 \\
0 & 0 & 0 & 0 \\
0 & 0 & 0 & 0 \\
\end{pmatrix}
+
\begin{pmatrix}
0 & \beta & 0 & 0 \\
0 & 0 & 0 & 0 \\
0 & 0 & 0 & 0 \\
0 & 0 & 0 & 0 \\
\end{pmatrix} ,
\end{align*}
where $\beta$ is $0$ or $2$. 
Since the first matrix is CN-realizable as shown in Figure \ref{fig-4C} and the second matrix is CN-realizable by Corollary \ref{cor-ji2}, the sum is also CN-realizable by Proposition \ref{prop-block}. 
\end{itemize}
\end{proof}

\subsection{$(0,2)$-CN matrices of $5 \times 5$}
\label{subch-5}

In this subsection, we show the following proposition. 

\begin{proposition}
Every $5\times 5$ T0 upper triangular $(0,2)$-matrix $M$ is CN-realizable. 
\label{prop-02-5}
\end{proposition}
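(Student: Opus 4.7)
The plan is to mirror the case-analysis strategy used for Proposition \ref{prop-44}, but with the $5$th column of $M$ as the splitting parameter and Proposition \ref{prop-44} now available as the inductive base. The toolbox is the same: Corollary \ref{cor-n+1} to peel off the last column when it has a ``staircase'' form, the reversal symmetry of Proposition \ref{prop-order-rev}, decomposition into sums via Proposition \ref{prop-block}, and direct BW-ladder constructions as in Example \ref{ex-D}.

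First, I would enumerate the $2^4 = 16$ possible $5$th columns $(a_1, a_2, a_3, a_4, 0)^T$ with $a_i \in \{0, 2\}$. The five staircase forms, in which the $2$'s occupy a bottom initial segment of positions $1, \dots, 4$, are immediately handled by Corollary \ref{cor-n+1} applied to a CN-realizable $4 \times 4$ matrix supplied by Proposition \ref{prop-44}. This leaves $11$ non-staircase columns to treat separately.

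Second, I would apply Proposition \ref{prop-order-rev}: since reversing $M$ swaps the $5$th column with the reverse of the first row, any case in which the first row of $M$ (read from the $(1,5)$ entry leftward) has staircase shape reduces to an already-handled case via $M'$. Combined with the T0 constraint, which forces $M(i,j) = 2$ whenever $M(i,5) = 2$ and $M(j,5) = 0$ for some $i < j < 5$ and then propagates into the other rows, this substantially cuts the remaining sub-cases. For each, only a small number of entries remain free (each $0$ or $2$), producing a manageable list of explicit matrices.

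Finally, for each remaining sub-case I would either split $M = M_1 + M_2$ so that one summand is handled by Proposition \ref{prop-44} extended via Corollary \ref{cor-n+1} and the other by Corollary \ref{cor-ji2} or Proposition \ref{prop-ji3} (concluding via Proposition \ref{prop-block}), or directly exhibit a BW-ladder diagram of $M$ together with a sequence of ladder moves transforming it into a W-ladder diagram, as in Example \ref{ex-D}. The main obstacle I expect is the combinatorial size of the case analysis, especially when $a_1 = 2$: although T0 then forces several entries to be $2$, the remaining free entries still range over enough possibilities that ad-hoc decompositions or ladder-move sequences must be produced one by one. A secondary worry is ensuring that every sub-case actually admits a clean decomposition; here the flexibility of the moves L6--L9 together with Proposition \ref{prop-bw} should close any residual gap via a direct ladder construction.
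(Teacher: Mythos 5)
Your strategy is essentially the paper's: reduce via Corollary \ref{cor-n+1} to the $2^4-5=11$ non-staircase fifth columns, use T0 to force entries, exploit the reversal symmetry of Proposition \ref{prop-order-rev}, and close each residual sub-case either by a block decomposition through Proposition \ref{prop-block} or by an explicit BW-ladder construction. The paper packages the reusable pieces into three preparatory lemmas (a ``small-support'' lemma reducing to the $4\times 4$ case, and a catalogue of eight configurations $a$--$h$ plus a ninth, each certified by a hand-built ladder sequence or braid diagram), and then disposes of the eleven cases by matching each against that catalogue.

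The one substantive caveat: what you have written is a roadmap, and the mathematical content of this proposition lives precisely in the part you defer. Several of the configurations the paper needs (e.g.\ its $g$ and $h$, and the second subcase of its Case D) are \emph{not} consequences of Proposition \ref{prop-44}, Corollary \ref{cor-n+1}, Corollary \ref{cor-ji2}, or Proposition \ref{prop-ji3} alone --- they require bespoke braid diagrams or ladder-move sequences exhibited explicitly, as in Example \ref{ex-D}. You correctly flag this as a ``worry,'' but until each of the residual sub-cases is actually closed with such a certificate, the proposition is not proved; your toolbox is the right one, but the enumeration must be carried to completion.
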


\noindent To prove Proposition \ref{prop-02-5}, we prepare three lemmas below.

\begin{lemma}
Let $M$ be a $5 \times 5$ T0 upper triangular $(0,2)$-matrix such that $M(i,j)=0$ when $i<I$ or $j>J$ for some $I, J$ with $1 \leq I <J \leq 5$ and $J-I \leq 3$. 
Then $M$ is CN-realizable. 
\label{lem-small}
\end{lemma}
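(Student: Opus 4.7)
The plan is to reduce the statement to the $n\leq 4$ cases already proved in Propositions \ref{prop-N23} and \ref{prop-44}. The hypothesis says that the only rows $i$ and columns $j$ where $M$ can carry a nonzero entry satisfy $I\leq i$ and $j\leq J$, and combined with the upper-triangularity this means the support of $M$ sits inside the square block indexed by $\{I,I+1,\dots,J\}$. Since $J-I\leq 3$, this block has size at most $4\times 4$.

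More concretely, let $M'$ denote the submatrix of $M$ obtained by selecting the rows and columns with indices $I,I+1,\dots,J$. I first need to check that $M'$ inherits the hypotheses: it is clearly strictly upper triangular and $(0,2)$-valued, and it is T0 because the T0 condition is a local one on index triples $i<j<k$, and every triple inside $\{I,\dots,J\}$ is also a triple in $\{1,\dots,5\}$, so no failure could occur. Hence, applying Proposition \ref{prop-N23} (if $J-I\leq 2$) or Proposition \ref{prop-44} (if $J-I=3$), there exists a pure $(J-I+1)$-braid diagram $B'$ with $N(B')=M'$.

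Next, I would upgrade $B'$ to a $5$-braid diagram realizing $M$ by adding trivial (vertical) strands on both sides. Specifically, form the $5$-braid diagram $B$ obtained from $B'$ by placing $I-1$ straight strands on the left and $5-J$ straight strands on the right. A new straight strand creates no crossings with itself or with any strand of $B'$, so for the resulting diagram $B$ one has $N(B)(i,j)=M'(i-I+1,j-I+1)$ whenever $I\leq i<j\leq J$ and $N(B)(i,j)=0$ otherwise, which is exactly $M$. Formally, the right-hand padding is a direct iteration of Proposition \ref{prop-plus1}, and the left-hand padding follows by combining Proposition \ref{prop-order-rev} with Proposition \ref{prop-plus1} (reverse, pad on the right, reverse back).

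This reduction is essentially routine; the only thing to watch out for is correctly bookkeeping the indices when embedding the $(J-I+1)\times(J-I+1)$ block into the $5\times 5$ matrix and confirming that T0-ness is preserved under passing to a block of consecutive indices. Once those are in place, the lemma is immediate, so I do not anticipate any real obstacle here; the lemma is designed to package the easy cases so that the harder $5\times 5$ analysis in the sequel can focus on matrices whose support genuinely fills the ``corners'' of the $5\times 5$ grid.
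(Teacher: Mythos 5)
Your proposal is correct and follows essentially the same route as the paper: extract the $(J-I+1)\times(J-I+1)$ block indexed by $I,\dots,J$, note it inherits the T0 and $(0,2)$ upper-triangular properties, realize it via Propositions \ref{prop-N23} and \ref{prop-44}, and pad with straight strands on both sides. Your extra remark that the padding can be formalized through Proposition \ref{prop-plus1} together with Proposition \ref{prop-order-rev} is a minor elaboration of what the paper does directly by construction.
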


\begin{proof}
Consider the $(J-I+1) \times (J-I+1)$ matrix $M_1$ such that $M_1(i,j)=M(i+I-1, j+I-1)$. 
We note that $M_1$ is T0 because $M$ is T0. 
Since $J-I+1 \leq 4$, $M_1$ is CN-realizable by Propositions \ref{prop-N23} and \ref{prop-44}. 
Let $B_1$ be a $(J-I+1)$-braid diagram such that $N(B_1)=M_1$. 
Add $I-1$ and $5-J$ straight strands to $B_1$ on the left- and right-hand sides, respectively, as shown in Figure \ref{fig-small} (3). 
Thus, we obtain a $5$-braid diagram $B$ such that $N(B)=M$. 
\end{proof}

\begin{figure}[ht]
\centering
\includegraphics[width=7.5cm]{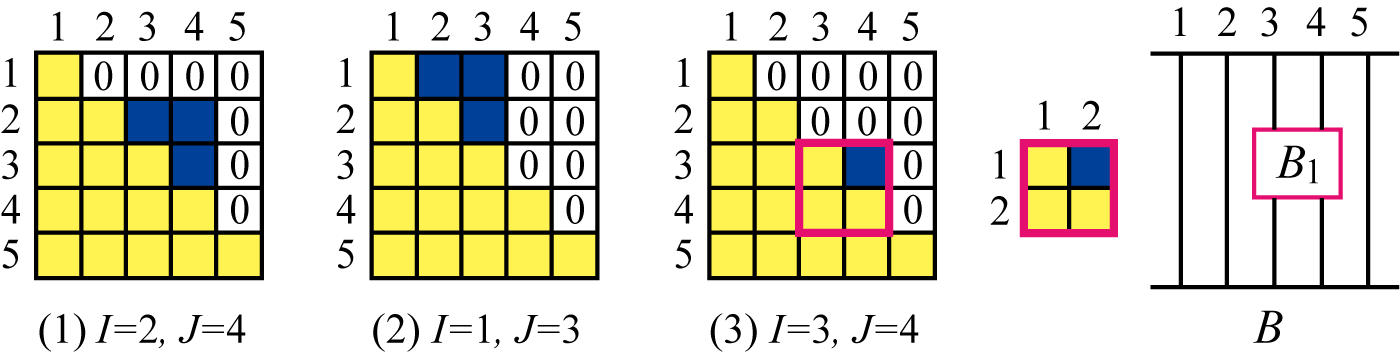}
\caption{(1): The case that $I=2$, $J=4$. (2): The case that $I=1$, $J=3$. (3): The case that $I=3$, $J=4$ and the construction of a braid diagram $B$.}
\label{fig-small}
\end{figure}

\begin{lemma}
If a $5 \times 5$ T0 upper triangular $(0,2)$-matrix $M$ has the configuration of one of $a$ to $h$ in Figure \ref{fig-ab}, then $M$ is CN-realizable. 
\label{lem-ab}
\end{lemma}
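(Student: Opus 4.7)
The plan is to verify CN-realizability for each of the eight configurations $a$ through $h$ separately by exhibiting an explicit pure braid diagram. For each configuration I first use the T0 hypothesis to determine exactly which entries of $M$ are forced to equal $2$: the configuration specifies some of the nonzero entries, and T0 then propagates further nonzero entries through every triple $i<j<k$ with $M(i,j)\neq 0$ and $M(j,k)\neq 0$. This replaces each pictorial configuration with a short, explicit list of $5\times 5$ T0 $(0,2)$-matrices to be realized.

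My first strategy for each resulting matrix $M$ is a block decomposition $M=M_1+M_2$, where each summand is a T0 $(0,2)$-matrix already known to be CN-realizable either by the tools of Section~\ref{section-02pure} (Proposition~\ref{prop-bw}, Corollary~\ref{cor-column}, or Corollary~\ref{cor-ji2}) or by the reduction in Lemma~\ref{lem-small}. When such a decomposition succeeds, Proposition~\ref{prop-block} immediately yields a realization of $M$. Typical candidates for $M_2$ are "band" matrices supported on entries with $j-i\leq 1$ (handled by Corollary~\ref{cor-ji2}) or single-column staircases (handled by Corollary~\ref{cor-column}), and $M_1$ is then the remainder.

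When no such decomposition is available, my second strategy is a direct ladder-move reduction: write down a B-ladder diagram of $M$, fix an order of the black edges (adjusting by L2 when needed), use L1 to open a hook between adjacent strands into a pair $W^{i}_{i+1}\,W^{i}_{i+1}$, and then drag those two white edges outward past the remaining hooks using L6--L9. Each such pass shortens one hook by one strand, so iterating eventually produces a W-ladder diagram that realizes $M$ as a CN matrix. Proposition~\ref{prop-bw} is the prototype for this procedure and will be reused in substrings of the edge sequence.

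The main obstacle, as I see it, is combinatorial bookkeeping rather than any conceptual difficulty: for some of the eight configurations the naive edge order fails the strand-adjacency hypotheses of L5--L9, so I expect to need a preliminary rearrangement by L2 or a short auxiliary L1--L9 shuffle to set up the correct adjacencies before the main reduction proceeds. Since $5\times 5$ $(0,2)$-matrices are small enough to inspect exhaustively, the entire argument will be picture-intensive but finite, with the eight subcases likely grouped by which of the two strategies above applies.
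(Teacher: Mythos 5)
Your proposal follows essentially the same route as the paper: the paper realizes each of configurations $a$--$h$ by decomposing the matrix as a sum of pieces, handling each piece either with an explicit BW-ladder diagram (the free $*$-entries absorbed by optionally omitting black edges) or with the small-matrix reduction of Lemma \ref{lem-small} (and Proposition \ref{prop-ji3} for configuration $f$), and then combining via Proposition \ref{prop-block}. The only difference is that the paper actually exhibits the decompositions and ladder-move sequences in figures, which is the finite, picture-intensive bookkeeping your plan defers.
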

\begin{figure}[ht]
\centering
\includegraphics[width=11.2cm]{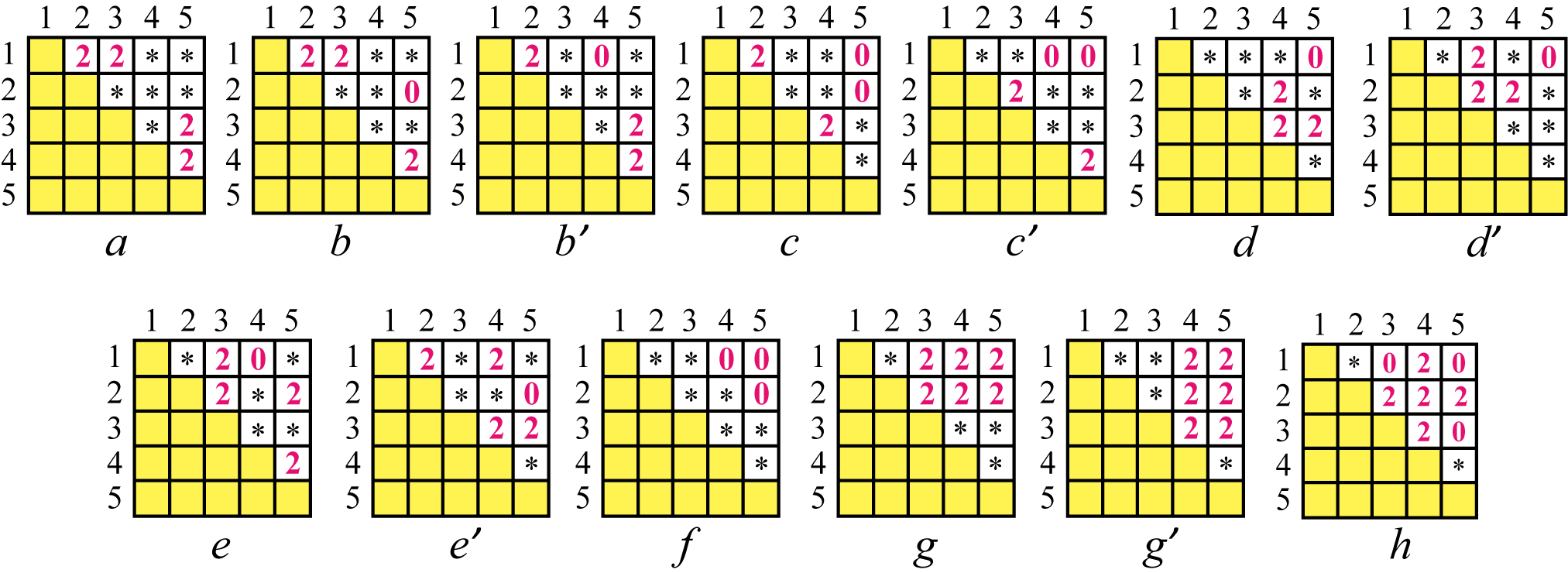}
\caption{The configurations $a$ to $h$ and their reverses. The entries marked $*$ is either $0$ or $2$. }
\label{fig-ab}
\end{figure}

\begin{proof}
We use Proposition \ref{prop-block} and Lemma \ref{lem-small}. 
For the configuration $a$, divide the matrix as 
\begin{align*}
\begin{pmatrix}
0 & 2 & 2 & \alpha_1 & \alpha_2 \\
0 & 0 & 0 & 0 & \alpha_3 \\
0 & 0 & 0 & 0 & 2 \\
0 & 0 & 0 & 0 & 2 \\
0 & 0 & 0 & 0 & 0 
\end{pmatrix}
+
\begin{pmatrix}
0 & 0 & 0 & 0 & 0 \\
0 & 0 & \beta_1 & \beta_2 & 0 \\
0 & 0 & 0 & \beta_3 & 0 \\
0 & 0 & 0 & 0 & 0 \\
0 & 0 & 0 & 0 & 0 
\end{pmatrix} ,
\end{align*}
where $\alpha_i$ and $\beta_j$ are either 0 or 2 ($i, j=1,2,3$). 
The first matrix is CN-realizable as shown in Figure \ref{fig-a} regardless of the values of $\alpha_1$, $\alpha_2$ and $\alpha_3$. 
\begin{figure}[ht]
\centering
\includegraphics[width=6cm]{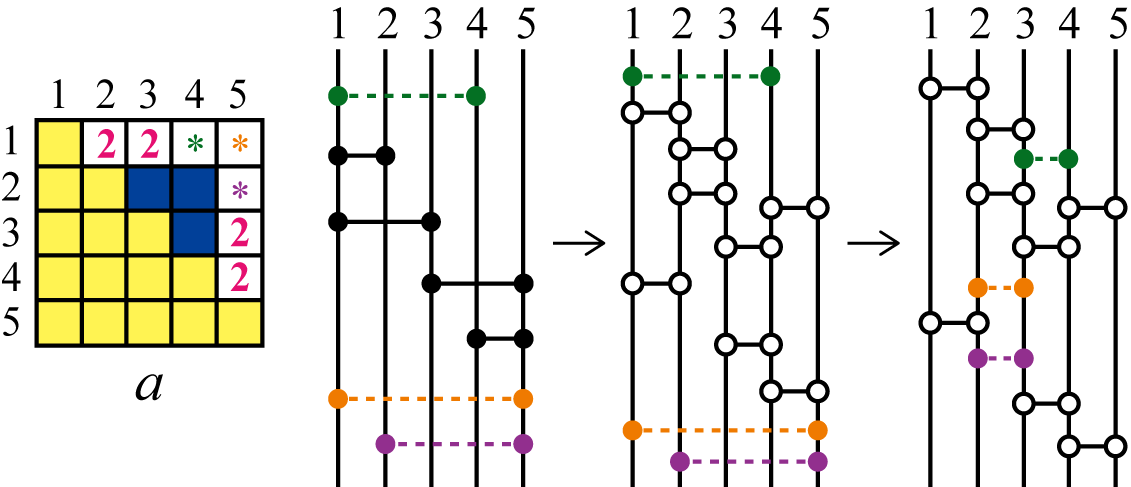}
\caption{Any colored broken black edge $B^i_j$ of $B^1_4$, $B^1_5$ and $B^2_5$ can be ignored when the corresponding $(i,j)$ entry of the matrix is $0$.}
\label{fig-a}
\end{figure}
The second matrix is CN-realizable by Lemma \ref{lem-small}. 
Hence, the sum is also CN-realizable by Proposition \ref{prop-block}. 
In the same way, we can see the CN-realizability for the configurations $b$ to $e$ as shown in Figure \ref{fig-b}. 
We note that we use Lemma \ref{lem-small} and Proposition \ref{prop-block} twice for the configurations $c, d, e$. 
\begin{figure}[ht]
\centering
\includegraphics[width=13cm]{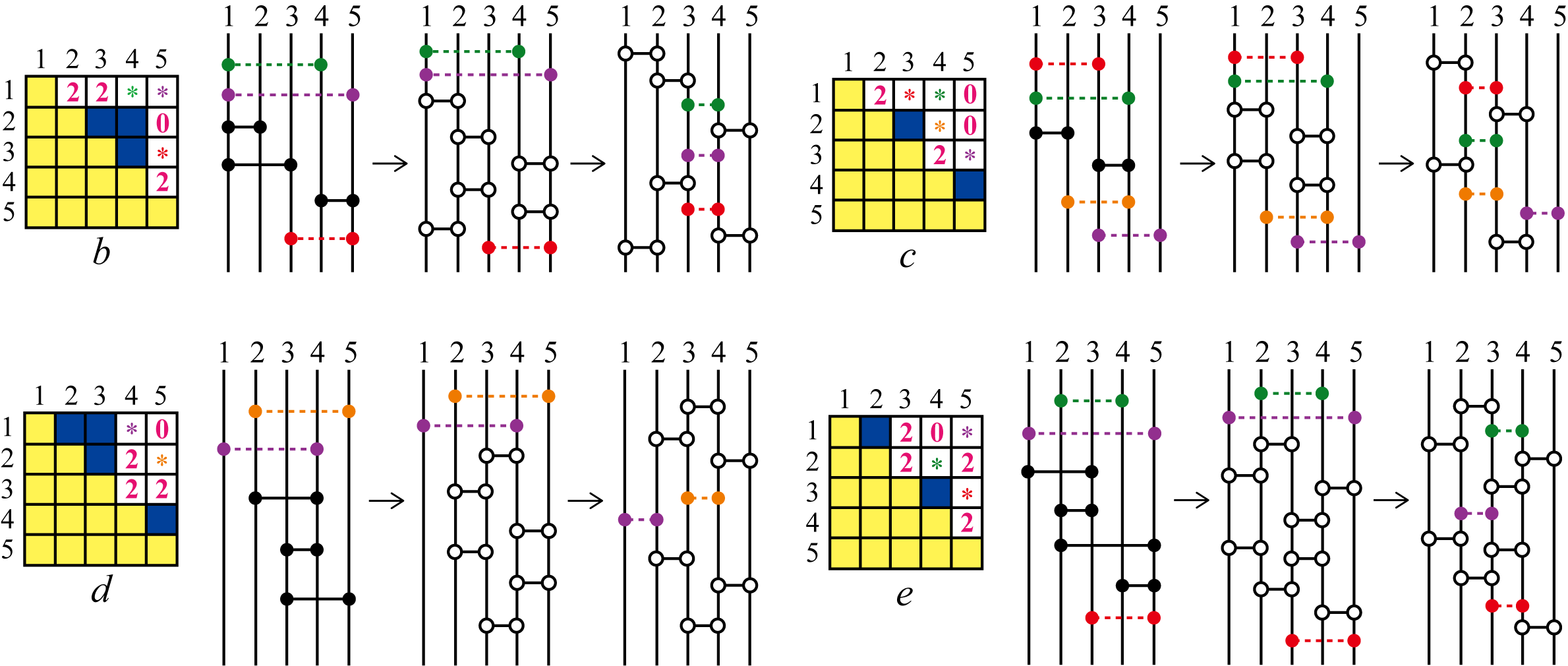}
\caption{Any colored broken black edge $B^i_j$ can be ignored when the corresponding $(i,j)$ entry of the matrix is $0$.}
\label{fig-b}
\end{figure}
The configuration $f$ realizes a CN matrix by Proposition \ref{prop-ji3}. 
For the configurations $g$ and $h$, the matrices 
\begin{align*}
\begin{pmatrix}
0 & 0 & 2 & 2 & 2 \\
0 & 0 & 2 & 2 & 2 \\
0 & 0 & 0 & 0 & 0 \\
0 & 0 & 0 & 0 & 0 \\
0 & 0 & 0 & 0 & 0 \\
\end{pmatrix}
\text{ and }
\begin{pmatrix}
0 & 0 & 0 & 2 & 0 \\
0 & 0 & 2 & 2 & 2 \\
0 & 0 & 0 & 2 & 0 \\
0 & 0 & 0 & 0 & 0 \\
0 & 0 & 0 & 0 & 0 \\
\end{pmatrix}
\end{align*}
are CN-realizable as shown in Figure \ref{fig-h}. 
\begin{figure}[ht]
\centering
\includegraphics[width=7.6cm]{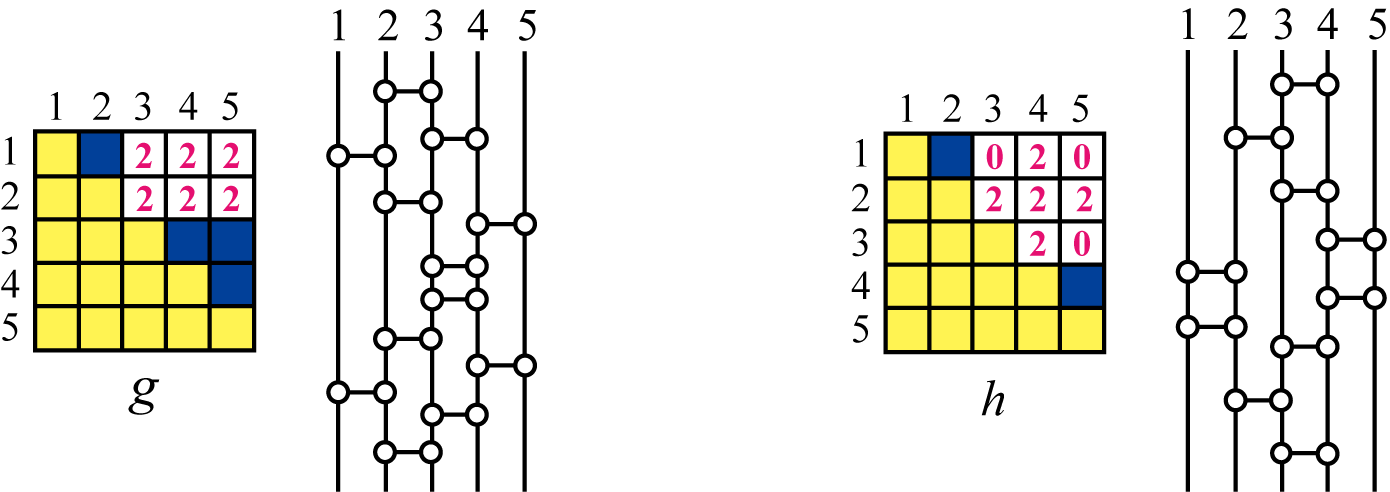}
\caption{The configurations $g$ and $h$ realize CN matrices.}
\label{fig-h}
\end{figure}
Hence, the configurations $g$ and $h$ also realize CN-realizable matrices by Lemma \ref{lem-small} and Proposition \ref{prop-block}. 
\end{proof}

\noindent We note that the reverses $a'$ to $h'$ of the configurations $a$ to $h$ also realize a CN matrix by Proposition \ref{prop-order-rev}. 
We also have the following lemma. 

\begin{lemma}
If a $5 \times 5$ T0 upper triangular $(0,2)$-matrix $M$ has one of the configurations shown in Figure \ref{fig-j}, which we call configuration $i$, then $M$ is CN-realizable. 
\end{lemma}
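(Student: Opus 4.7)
The plan is to follow exactly the decomposition strategy used in the proof of Lemma \ref{lem-ab}. For each sub-configuration of configuration $i$ (determined by the positions of the forced $2$-entries, together with the $*$-entries that may independently be $0$ or $2$), I would split the given matrix as $M = M_1 + M_2$, where $M_1$ and $M_2$ are $5 \times 5$ T0 upper triangular $(0,2)$-matrices, each CN-realizable by a previously established route, and then invoke Proposition \ref{prop-block} to conclude that $M$ is CN-realizable.

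First, I would choose $M_1$ to collect all the "spread-out" $2$-entries of configuration $i$ (those with column index $5$ or row index $1$ on the far sides), together with any $*$-entries that can be absorbed into a single BW-ladder diagram. I would then exhibit an explicit B-ladder diagram for $M_1$, using the ladder-move calculus of Section \ref{sub02-ladder}, and in particular Proposition \ref{prop-bw} and Corollary \ref{cor-column}, to transform it into a W-ladder diagram. As in Figures \ref{fig-a} and \ref{fig-b}, the diagram would be drawn with colored broken black edges $B^i_j$ for each $*$-entry, so that every admissible choice of the $*$-values is realized by keeping or deleting those edges without disrupting the ladder-move reduction.

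Next, I would arrange the splitting so that $M_2 = M - M_1$ has support contained in four consecutive indices $\{I, I+1, \dots, J\}$ with $J - I \leq 3$; the T0 property of $M_2$ then follows from that of $M$ on this block, and Lemma \ref{lem-small} yields CN-realizability of $M_2$. Proposition \ref{prop-block} then gives CN-realizability of $M = M_1 + M_2$. If a particular sub-configuration has only short-range entries left over, I would instead invoke Proposition \ref{prop-ji3} or Corollary \ref{cor-ji2} for $M_2$. The reverse sub-configurations come free of charge from Proposition \ref{prop-order-rev}.

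The main obstacle will be producing, for each sub-configuration of $i$, a single robust B-ladder diagram for $M_1$ that reduces to a W-ladder diagram uniformly in all the $*$-entries it absorbs; the T0 hypothesis on $M$ must be used carefully to ensure that, whenever a $*$-entry is "on" in the B-ladder picture, the neighboring forced white edges needed to apply ladder moves L7 or L8 (as in the proof of Proposition \ref{prop-ji3}) are actually present. If no such uniform ladder exists for some choice of the $*$-pattern, that pattern would have to be peeled off into a further summand and handled by an independent application of Lemma \ref{lem-small} or of one of the already-established configurations $a$--$h$ of Lemma \ref{lem-ab}.
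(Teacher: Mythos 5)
Your proposal is a strategy outline rather than a proof: the essential content --- an explicit B-ladder diagram for each sub-configuration of $i$ whose reduction to a W-ladder diagram works uniformly in the $*$-entries --- is never produced. You even flag yourself that such a uniform ladder might not exist for some $*$-patterns, in which case the argument ``would have to be peeled off into a further summand'' in an unspecified way. So nothing is actually verified for configuration $i$: the CN-realizability of your $M_1$ is assumed, not established, and the choice of splitting that makes $M_2$ fit Lemma \ref{lem-small} is likewise only asserted to exist. A decomposition in the style of Lemma \ref{lem-ab} could in principle be carried through, but until the diagrams are exhibited for every admissible choice of the $*$-entries, the lemma is not proved.

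You have also missed the short route the lemma is built for, which requires no decomposition at all. Configuration $i$ is arranged so that (up to taking the reverse via Proposition \ref{prop-order-rev}) the fifth column of the matrix is exactly of the contiguous form $(0,\dots,0,2,\dots,2,0)^T$ demanded by Corollary \ref{cor-n+1}, while the remaining $4\times 4$ upper-left block is a T0 upper triangular $(0,2)$-matrix and is therefore CN-realizable by Proposition \ref{prop-44}. Corollary \ref{cor-n+1} then yields CN-realizability of the reversed matrix, hence of $M$ itself, in one line, with no case analysis over the $*$-entries and no new ladder computation. That is the paper's entire proof; your plan replaces it with a family of unexecuted ladder constructions.
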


\begin{proof}
This follows from Proposition \ref{prop-order-rev} and Corollary \ref{cor-n+1}. 
\end{proof}

\begin{figure}[ht]
\centering
\includegraphics[width=8.5cm]{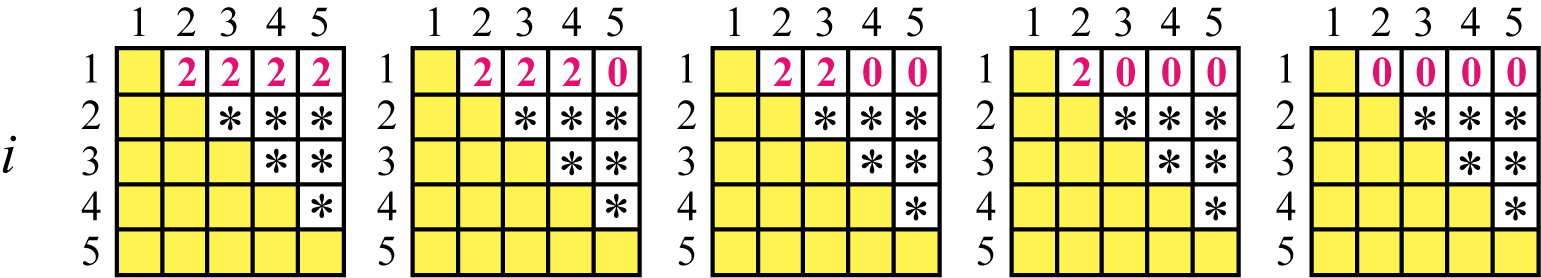}
\caption{The configuration $i$.}
\label{fig-j}
\end{figure}

\noindent Now we prove Proposition \ref{prop-02-5}. \\

\noindent {\it Proof of Proposition \ref{prop-02-5}.} \ 
By Corollary \ref{cor-n+1}, it is sufficient to check the eleven cases, which is $2^4 -5$, on the fifth column A to K in Figure \ref{fig-ABC}. 
By T0, some of the entries in the first to fourth columns are automatically assigned, as shown in Figure \ref{fig-ABC}. 
\begin{figure}[ht]
\centering
\includegraphics[width=10cm]{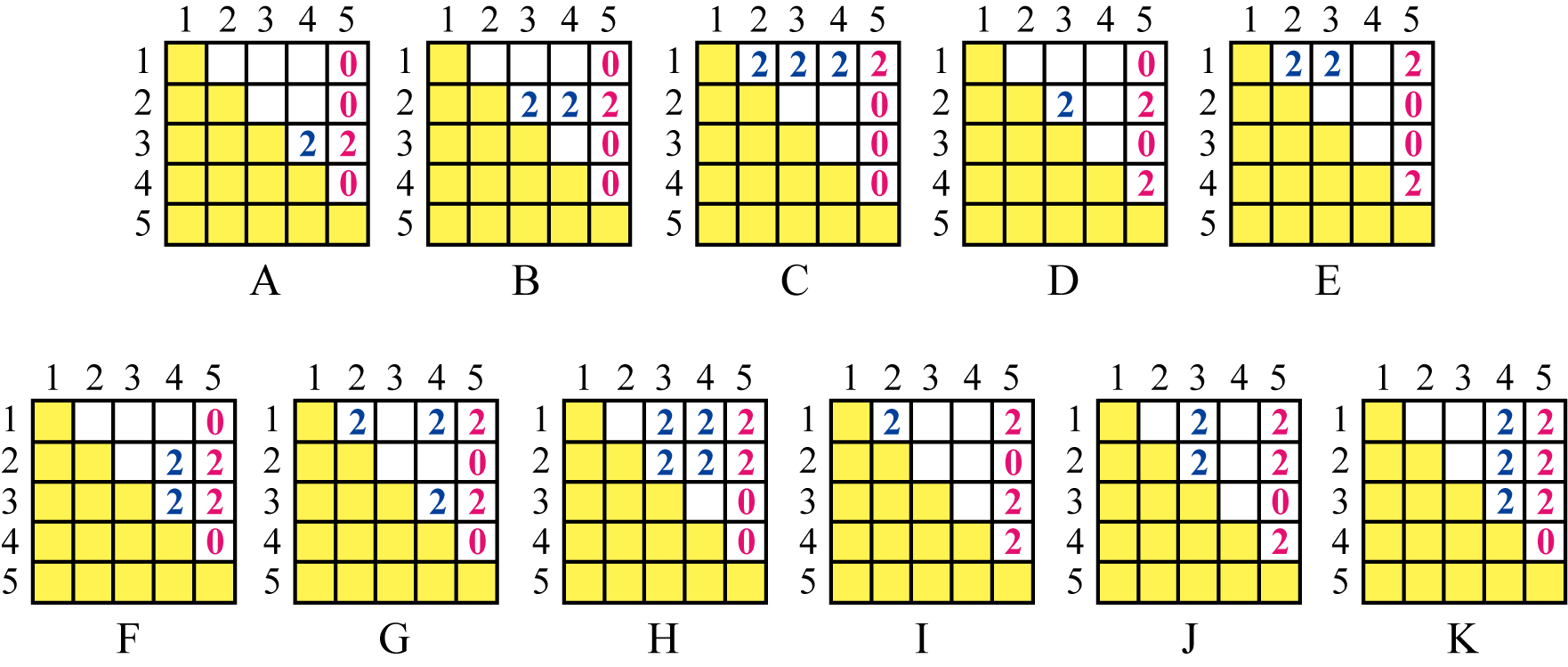}
\caption{Cases A to K.}
\label{fig-ABC}
\end{figure}

\begin{itemize}
\item[Case A:] By the configuration $f$, it is sufficient to suppose that the $(1,4)$ entry is $2$. 
Then, by configuration $c$, suppose that the $(1,2)$ entry is $0$. 
By T0, then, the $(2,4)$ entry is $2$, and observe that it has configuration $d$. 
Hence, any matrix $M$ of Case A is CN-realizable. 

\item[Case B:] By the configuration $d'$, suppose that the $(0,3)$ entry is $0$. 
By configuration $i$, suppose that the $(1,4)$ entry is $2$. 
This has the configuration $h$. 

\item[Case C:] This has the configuration $i$.

\item[Case D:] By configuration $c'$, suppose that the $(1,4)$ entry is $2$. 
Then, by configuration $i$, suppose the three subcases shown in Figure \ref{fig-AEIJ}. 
The first subcase has the configuration $h$. 
The second subcase has the braid diagram as shown in the figure. 
(See Example \ref{ex-D} for the construction of the braid diagram.) 
The third has the configuration $d'$. 

\item[Case E:] By the configuration $i$, suppose that the $(1,4)$ entry is $0$. 
This has the configuration $b$. 

\item[Case F:] This has the configuration $d$. 

\item[Case G:] This has the configuration $e'$. 

\item[Case H:] This has the configuration $g$. 

\item[Case I:] By the configuration $a$, suppose that the $(1,3)$ entry is $0$. 
By the configuration $b'$, suppose that the $(1,4)$ entry is $2$. 
By T0, then, the $(3,4)$ entry is $2$. 
This has the configuration $e'$. 

\item[Case J:] By the configuration $e$, suppose that the $(1,4)$ entry is $2$. 
By the configuration $h$, suppose that the $(2,4)$ entry is $2$. 
Then, it has the configuration $g$. 

\item[Case K:] This has the configuration $g'$. 
\end{itemize}
\begin{figure}[ht]
\centering
\includegraphics[width=12cm]{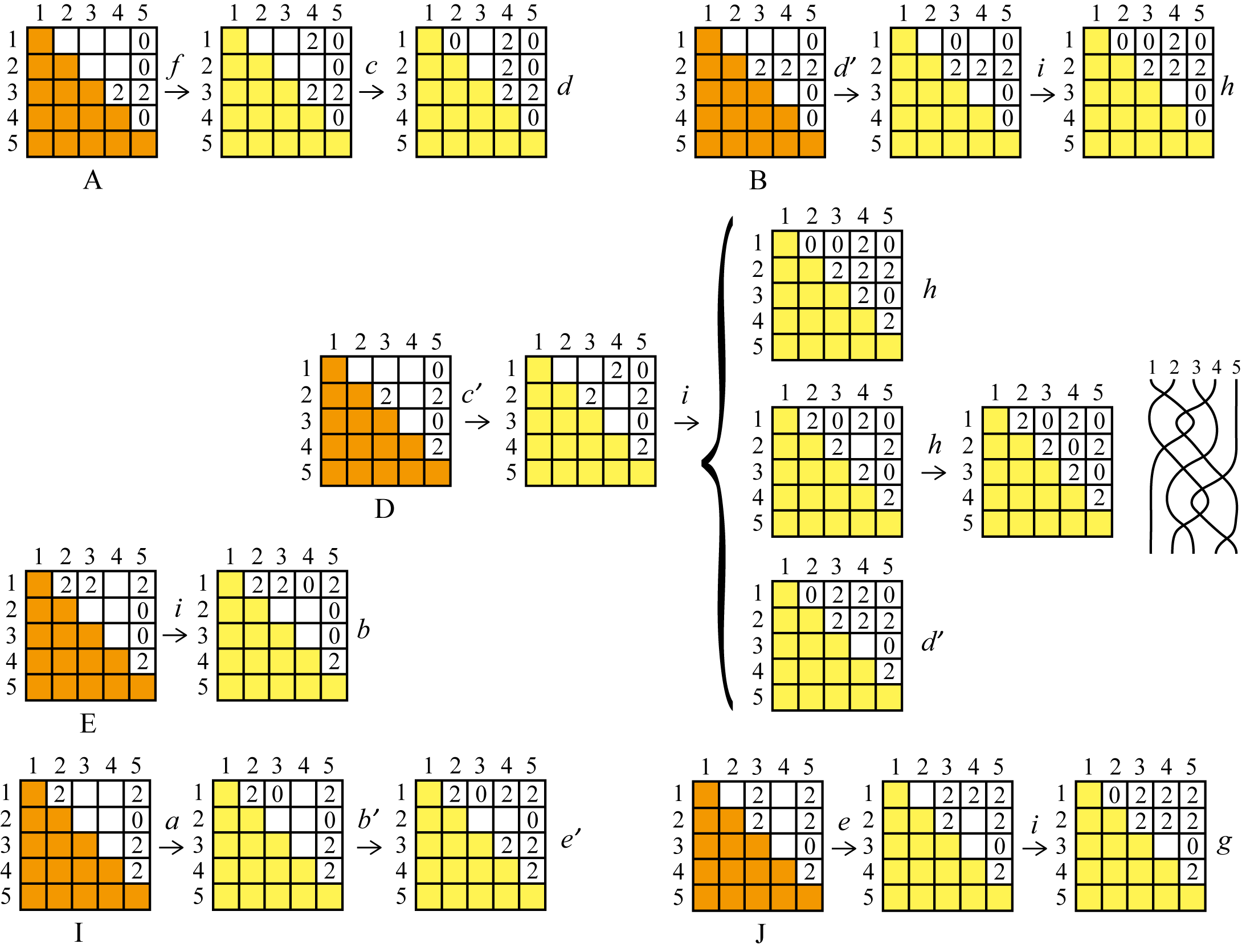}
\caption{Cases A, B, D, E, I and J.}
\label{fig-AEIJ}
\end{figure}
Hence, every $5\times 5$ T0 upper triangular $(0,2)$-matrix is CN-realizable. 
\qed \\

\noindent By Propositions \ref{prop-N23}, \ref{prop-44} and \ref{prop-02-5}, we have the following corollary. 

\begin{corollary}
When $n \leq 5$, any $n\times n$ T0 upper triangular $(0,2)$-matrix is CN-realizable. 
\label{cor-CN5}
\end{corollary}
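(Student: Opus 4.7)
The plan is to give a short case-by-case proof by dispatching each value of $n \leq 5$ to the appropriate proposition already established in this section. Since the statement is a pure aggregation result, there is no new combinatorial or constructive content required; the entire burden has been absorbed into the preceding propositions.

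First I would note that for $n=1$ the only $1 \times 1$ T0 upper triangular $(0,2)$-matrix is the zero matrix, which is CN-realizable by the trivial $1$-braid diagram. For $n=2$ and $n=3$, I would cite Proposition \ref{prop-N23}, which asserts that every $2 \times 2$ or $3 \times 3$ T0 upper triangular $(0,2)$-matrix is CN-realizable. For $n=4$, Proposition \ref{prop-44} gives exactly the desired conclusion. For $n=5$, Proposition \ref{prop-02-5} supplies the result.

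There is no substantial obstacle here: the corollary is precisely the conjunction of Propositions \ref{prop-N23}, \ref{prop-44}, and \ref{prop-02-5} across the admissible values of $n$. The only thing to verify is that the $n=1$ edge case is genuinely trivial and not accidentally excluded by the statement; the $1 \times 1$ zero matrix is both vacuously T0 and vacuously of the $(0,2)$-type, and it is realized by the unique $1$-braid diagram with no crossings. Thus the statement holds for all $n \in \{1,2,3,4,5\}$, completing the proof.
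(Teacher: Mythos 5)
Your proof is correct and takes essentially the same route as the paper, which deduces the corollary directly from Propositions \ref{prop-N23}, \ref{prop-44} and \ref{prop-02-5}; your explicit handling of the trivial $n=1$ case is a harmless addition the paper leaves implicit.
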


\noindent We also have the following corollary by Corollary \ref{cor-CN5} and Proposition \ref{prop-M02}.

\begin{corollary}
When $n \leq 5$, any $n\times n$ non-negative even T0 upper triangular matrix is CN-realizable by a pure $n$-braid diagram. 
\label{prop-CN5}
\end{corollary}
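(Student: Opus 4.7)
The plan is very short: the corollary combines Corollary \ref{cor-CN5} and Proposition \ref{prop-M02} in the obvious way, with only one tiny observation to check, namely that replacing positive entries with $2$ preserves the T0 property.

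Let $M$ be an $n \times n$ non-negative even T0 upper triangular matrix with $n \leq 5$. I would form the associated $(0,2)$-matrix $M^{02}$ as in Proposition \ref{prop-M02}, by replacing every positive entry of $M$ with $2$ and leaving zero entries unchanged. The key point to record is that the T0 condition depends only on the zero/nonzero pattern of the entries: the hypothesis $M(i,j) = M(j,k) = 0$ and the conclusion $M(i,k) = 0$ for $i < j < k$ refer only to which entries vanish. Since $M^{02}(x,y) = 0$ if and only if $M(x,y) = 0$, the T0 property transfers from $M$ to $M^{02}$. Thus $M^{02}$ is an $n \times n$ T0 upper triangular $(0,2)$-matrix.

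By Corollary \ref{cor-CN5}, $M^{02}$ is CN-realizable; that is, there exists an $n$-braid diagram $B$ with $N(B) = M^{02}$. By Proposition \ref{prop-02pure}, $B$ is automatically a pure braid diagram. Now apply Proposition \ref{prop-M02}: starting from such a realization of $M^{02}$, replacing one crossing of each pair $(s_i, s_j)$ with $M(i,j) - 1$ additional half twists whenever $M(i,j) \geq 4$ produces a pure braid diagram $B'$ with $N(B') = M$. Hence $M$ is CN-realizable by a pure $n$-braid diagram, as claimed.

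The only conceivable obstacle would be if the T0 hypothesis were not preserved by the symbolic replacement $M \mapsto M^{02}$, but as noted above this is immediate from the fact that T0 is a condition on the support of $M$. Everything else is a direct citation of results already established in the excerpt, so no further combinatorial case analysis is needed at this stage.
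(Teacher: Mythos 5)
Your proposal is correct and matches the paper's intended argument exactly: the paper derives this corollary directly from Corollary \ref{cor-CN5} and Proposition \ref{prop-M02}, and your observation that T0 depends only on the support of the matrix (so that $M^{02}$ inherits T0 from $M$) is the one small point worth making explicit. Nothing is missing.
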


\section{Proof of Theorem \ref{thm-OU5} and applications}
\label{section-pf}

We prove Theorem \ref{thm-OU5} in Section \ref{su-pf} and then discuss the crossing matrices of positive pure braids in Section \ref{su-c}, the OU matrices of braid diagrams that are not necessarily pure in Section \ref{su-o}. 
In this section, we consider a CN matrix as a symmetric matrix as defined in Section \ref{section-preliminaries}, not a strictly upper triangular matrix as in Sections \ref{section-02pure} and \ref{section-5b}.

\subsection{Proof of Theorem \ref{thm-OU5}}
\label{su-pf}

In this subsection, we prove Theorem \ref{thm-OU5}. 
We have the following proposition. 

\begin{proposition}
When $n \leq 5$, an $n \times n$ matrix $M$ is the CN matrix of a pure $n$-braid diagram if and only if $M$ is a non-negative even T0 symmetric matrix.
\label{prop-CN5p}
\end{proposition}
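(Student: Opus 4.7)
The plan is to split the biconditional into its two directions, both of which now fall out from results already established in the paper; there is no new combinatorial work to do here, only bookkeeping to convert between the symmetric and strictly upper triangular representations used in the two halves of the paper.

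For the forward direction (necessity), I would argue that if $M = N(B)$ for some pure $n$-braid diagram $B$, then three facts are needed. First, by the definition of the CN matrix in Section \ref{subs-CN}, $M$ is a zero-diagonal symmetric matrix with non-negative integer entries. Second, since $B$ is pure, Proposition \ref{prop-even-pure} forces every entry of $M$ to be even. Third, Proposition \ref{prop-adequate} asserts directly that any CN matrix is T0. Concatenating these three observations yields that $M$ is a non-negative even T0 symmetric matrix.

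For the reverse direction (sufficiency), suppose $M$ is an $n \times n$ non-negative even T0 symmetric matrix with $n \leq 5$. The only subtlety is notational: Corollary \ref{prop-CN5} is phrased for strictly upper triangular matrices, whereas the proposition is phrased for the symmetric representation. I would let $\widetilde{M}$ denote the strictly upper triangular matrix obtained from $M$ by replacing every entry strictly below the diagonal with $0$, so that $M$ and $\widetilde{M}$ encode the same data under the convention adopted at the start of Section \ref{section-02pure}. The matrix $\widetilde{M}$ inherits the properties of being non-negative, even, and T0 from $M$. Corollary \ref{prop-CN5} then produces a pure $n$-braid diagram $B$ with $N(B) = \widetilde{M}$, and since $N(B)$ is by definition symmetric, this is the same as $N(B) = M$ under the symmetric convention.

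There is no real obstacle here; the content has already been done in Corollary \ref{prop-CN5}, which in turn relies on the case analyses of Sections \ref{section-02pure} and \ref{section-5b} (especially Proposition \ref{prop-02-5} for $n = 5$) together with Proposition \ref{prop-M02} to lift from the $(0,2)$-case to the general non-negative even case. The proposition as stated here is really just a restatement of Corollary \ref{prop-CN5} in symmetric-matrix language, augmented with the straightforward necessity direction.
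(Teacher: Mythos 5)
Your proposal is correct and follows essentially the same route as the paper: necessity from Propositions \ref{prop-even-pure} and \ref{prop-adequate} together with the definition of the CN matrix, and sufficiency by invoking Corollary \ref{prop-CN5}. The extra remark about converting between the symmetric and strictly upper triangular conventions is a harmless elaboration of what the paper leaves implicit.
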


\begin{proof}
By Propositions \ref{prop-even-pure} and \ref{prop-adequate}, the CN matrix $N(B)$ of a pure $n$-braid diagram $B$ is a non-negative even T0 symmetric matrix. 
On the other hand, a non-negative even T0 symmetric matrix is CN-realizable by a pure $n$-braid diagram by Corollary \ref{prop-CN5}. 
\end{proof}

\noindent We prove Theorem \ref{thm-OU5}. \\

\noindent {\it Proof of Theorem \ref{thm-OU5}.} \ 
By Propositions \ref{prop-even-pure} and \ref{prop-adequate}, the OU matrix $U(B)$ of any pure $n$-braid diagram $B$ is an $n \times n$ matrix such that $U(B)+(U(B))^T$ is a non-negative even T0 matrix. \par 
Let $M$ be an $n \times n$ zero-diagonal matrix ($n \leq 5$) such that $M+M^T$ is a non-negative even T0 matrix. 
Since $M+M^T$ is CN-realizable by a pure $n$-braid diagram by Corollary \ref{prop-CN5}, $M$ is OU-realizable by a pure $n$-braid diagram by Corollary \ref{cor-oucn}. 
\qed \\

\subsection{C-realizable matrix}
\label{su-c}

In this subsection, we discuss the crossing matrices of positive pure braids. 
We have the following lemmas. 

\begin{lemma}
A zero-diagonal symmetric matrix $M$ is T0 if and only if $M+M^T$ is T0. 
\label{lem-symT0}
\end{lemma}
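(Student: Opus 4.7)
The plan is to observe that the T0 condition depends only on the zero-pattern of the matrix (which entries are zero versus nonzero), not on the specific values of the nonzero entries. Since $M$ is symmetric, we have $M^T = M$, so $M + M^T = 2M$. The matrices $M$ and $2M$ have identical zero-patterns, because $2M(i,j) = 0$ if and only if $M(i,j) = 0$.

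More concretely, I would argue both directions simultaneously. Fix any triple $1 \le i < j < k \le n$. The implication $M(i,j) = M(j,k) = 0 \Rightarrow M(i,k) = 0$ holds if and only if the implication $2M(i,j) = 2M(j,k) = 0 \Rightarrow 2M(i,k) = 0$ holds, since multiplication by $2$ is injective on the integers. Because $M + M^T = 2M$, the latter is precisely the T0 condition for $M + M^T$ at the triple $(i,j,k)$. Also, both $M$ and $2M$ are zero-diagonal since $M$ is zero-diagonal. Quantifying over all triples, $M$ is T0 if and only if $M + M^T$ is T0.

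There is no real obstacle here; the argument is essentially a one-line observation once one recognizes that the hypothesis of symmetry reduces $M + M^T$ to $2M$ and that T0 is invariant under nonzero scalar multiplication. The only minor care needed is to verify the zero-diagonal clause of the T0 definition, which is trivially preserved.
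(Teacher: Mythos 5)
Your proposal is correct and follows essentially the same route as the paper: both reduce to the observation that symmetry gives $M+M^T=2M$, so the zero-pattern (which is all T0 depends on) is unchanged. No gaps.
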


\begin{proof}
When $M$ is symmetric, i.e., $M(i,j)=M(j,i)$, $M(i,j)=0$ if and only if $M(i,j)+M^T(i,j)=0$. 
\end{proof}

\begin{lemma}
When $n \leq 5$, an $n \times n$ matrix $M$ is the OU matrix of a pure $n$-braid diagram such that each pair of strands $s_i$ and $s_j$ has the same number of crossings such that $s_i$ is over $s_j$ and $s_j$ is over $s_i$ if and only if $M$ is a non-negative integer T0 symmetric matrix. 
\label{lem-sisj}
\end{lemma}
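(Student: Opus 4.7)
The plan is to translate the balanced over/under condition into symmetry of $M$ and then reduce the statement to the characterizations already established for pure CN and OU matrices. Observe first that the condition ``each pair of strands $s_i$, $s_j$ has the same number of crossings with $s_i$ over $s_j$ as with $s_j$ over $s_i$'' is, by definition of the OU matrix, literally equivalent to $M(i,j)=M(j,i)$ for all $i\neq j$, i.e., to $M$ being symmetric.

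For the necessity direction, suppose $M = U(B)$ where $B$ is such a pure $n$-braid diagram. Then $M$ has non-negative integer entries by the definition of the OU matrix, and $M$ is symmetric by the pair-balance condition. Moreover $M+M^T = U(B)+U(B)^T = N(B)$, which is T0 by Proposition \ref{prop-adequate}. Hence by Lemma \ref{lem-symT0}, $M$ itself is T0.

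For the sufficiency direction, suppose $n \leq 5$ and $M$ is a non-negative integer T0 symmetric matrix. Then $M+M^T = 2M$ is a non-negative even symmetric matrix which is T0 by Lemma \ref{lem-symT0}. By Corollary \ref{prop-CN5}, there is a pure $n$-braid diagram $B$ with $N(B) = 2M$. Now $M$ is a non-negative integer matrix satisfying $M+M^T = N(B)$, so Corollary \ref{cor-oucn} produces a pure $n$-braid diagram $B'$ with $U(B') = M$. Because $M$ is symmetric, each pair of strands in $B'$ has the same number of crossings in either over/under configuration, as required.

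The argument is essentially a routine assembly of earlier results; the only thing to get right is the observation that the hypothesis on the pair counts is exactly symmetry of $U(B)$, after which Lemma \ref{lem-symT0} bridges the T0 property between $M$ and $M+M^T$ so that Corollary \ref{prop-CN5} and Corollary \ref{cor-oucn} can be applied in sequence. I expect no substantive obstacle.
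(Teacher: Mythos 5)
Your proposal is correct and follows essentially the same route as the paper's proof: identify the pair-balance condition with symmetry of $U(B)$, use Lemma \ref{lem-symT0} to pass the T0 property between $M$ and $M+M^T=2M$, and then apply the CN-realizability result for $n\leq 5$ together with Corollary \ref{cor-oucn}. The only cosmetic difference is that for necessity you invoke Proposition \ref{prop-adequate} on $N(B)=M+M^T$ directly where the paper cites Theorem \ref{thm-OU5}, but these amount to the same argument.
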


\begin{proof}
By Theorem \ref{thm-OU5}, the OU matrix of a pure $n$-braid $M$ $(n \leq 5)$ satisfies that $M+M^T$ is a non-negative even T0 $n \times n$ matrix. 
Moreover, $M$ is symmetric with the condition of the crossing number of $s_i$ and $s_j$, i.e., $M(i,j)=M(j,i)$. 
Hence $M$ itself is also T0 by Lemma \ref{lem-symT0}. \par 
On the other hand, let $M$ be an $n \times n$ non-negative integer T0 symmetric matrix ($n \leq 5$). 
Then $M+M^T$ is also T0 by Lemma \ref{lem-symT0} and the entries of $M+M^T$ are all non-negative even numbers since $M$ is symmetric. 
Therefore, $M+M^T$ is CN-realizable by a pure $n$-braid diagram by Proposition \ref{prop-CN5p}. 
Then $M$ is OU-realizable by a pure $n$-braid diagram $B$ by Corollary \ref{cor-oucn}. 
Moreover, $B$ is a braid diagram such that each pair of strands $s_i$, $s_j$ of $B$ has the same number of crossings such that $s_i$ is over $s_j$ and $s_j$ is over $s_i$ because $M(i,j)=M(j,i)$. 
\end{proof}

\noindent We have the following corollaries. 

\begin{corollary}
When $n \leq 5$, the crossing matrix $C(B)$ of any positive pure $n$-braid diagram $B$ is an $n \times n$ non-negative integer T0 symmetric matrix.
\label{cor-5ppp}
\end{corollary}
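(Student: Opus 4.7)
The plan is to reduce the statement about the crossing matrix $C(B)$ to what has already been established about the OU matrix, and then apply the T0 criterion of Lemma \ref{lem-symT0}. Since $B$ is positive, Proposition \ref{prop-OU-C} gives $C(B) = U(B)$, so every property of $C(B)$ can be read off from $U(B)$.

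First I would observe that $C(B)$ is a non-negative integer matrix: its $(i,j)$ entry equals the number of positive crossings minus the number of negative crossings where $s_i$ is over $s_j$, and in a positive diagram there are no negative crossings, so $C(B)(i,j) \geq 0$. Next, because $B$ is a positive pure braid diagram, Proposition \ref{prop-sy} says $U(B)$ is symmetric, and combined with $C(B) = U(B)$ this yields that $C(B)$ is symmetric.

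For the T0 condition, I would invoke Theorem \ref{thm-OU5}: since $B$ is a pure $n$-braid diagram with $n \leq 5$, the matrix $U(B) + U(B)^T$ is a non-negative even T0 matrix. Substituting $C(B) = U(B)$, the matrix $C(B) + C(B)^T$ is T0. Finally, since $C(B)$ is symmetric, Lemma \ref{lem-symT0} transfers T0-ness from $C(B) + C(B)^T$ to $C(B)$ itself, completing the argument.

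There is no real obstacle here — the corollary is a direct assembly of Proposition \ref{prop-OU-C}, Proposition \ref{prop-sy}, Theorem \ref{thm-OU5}, and Lemma \ref{lem-symT0}, with the only minor point being to note that positivity of $B$ guarantees non-negativity of the entries of $C(B)$. Essentially the proof is a two-line deduction from the preceding results.
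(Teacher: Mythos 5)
Your proof is correct and follows essentially the same route as the paper: the paper derives the corollary from Proposition \ref{prop-OU-C} together with Lemma \ref{lem-sisj}, whose ``only if'' direction is exactly your combination of Theorem \ref{thm-OU5}, symmetry of $U(B)$, and Lemma \ref{lem-symT0}. You have simply inlined the proof of Lemma \ref{lem-sisj} (using Proposition \ref{prop-sy} for symmetry) rather than citing it, which changes nothing of substance.
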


\begin{proof}
It follows from Lemma \ref{lem-sisj} since $C(B)=U(B)$ when $B$ is a positive braid diagram $B$ by Proposition \ref{prop-OU-C}. 
\end{proof}

\begin{corollary}
When $n \leq 5$, any $n \times n$ non-negative integer T0 symmetric matrix is C-realizable by a positive pure $n$-braid. 
\label{cor-5pp}
\end{corollary}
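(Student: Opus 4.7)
The plan is to realize $M$ as the crossing matrix of a positive pure $n$-braid via a two-step construction: first realize the doubled matrix $2M$ as a CN matrix of some pure braid diagram, then force the diagram to be positive. Since $M$ is a non-negative integer T0 symmetric matrix, the matrix $2M$ is likewise a non-negative even T0 symmetric matrix. For $n \leq 5$, Proposition \ref{prop-CN5p} then supplies a pure $n$-braid diagram $B$ with $N(B) = 2M$.

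Next, I would convert $B$ into a positive pure braid diagram $B'$ by performing a crossing change at every negative crossing of $B$. Two observations make this step harmless: a crossing change does not alter the \emph{number} of crossings between any pair of strands, so $N(B') = N(B) = 2M$; and a crossing change does not alter the braid permutation, so $B'$ is still pure. By construction, every crossing of $B'$ is now positive.

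With $B'$ a positive pure braid diagram, Proposition \ref{prop-sy} tells me that $U(B')$ is symmetric. Combined with the defining identity $U(B') + U(B')^T = N(B') = 2M$, symmetry of $U(B')$ forces $U(B') = M$. Proposition \ref{prop-OU-C} then upgrades this to $C(B') = U(B') = M$, so the braid $b = [B']$ is positive, pure, and satisfies $C(b) = M$, as required.

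The argument is essentially a direct synthesis of the machinery already assembled in the paper, so I do not anticipate a serious obstacle. The only delicate point is the observation that a crossing change in a braid diagram preserves both the pairwise crossing count (trivially) and the braid permutation (because it only swaps over/under information, not the combinatorial trajectory of the strands), which is what lets the passage from $B$ to $B'$ work while keeping both the CN matrix and pureness intact.
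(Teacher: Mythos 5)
Your proof is correct and follows essentially the same route as the paper: realize the doubled matrix as a CN matrix of a pure diagram (Proposition~\ref{prop-CN5p}), make the diagram positive by crossing changes, and use symmetry of the OU matrix of a positive pure braid diagram together with $C=U$ for positive diagrams. The only difference is cosmetic --- the paper passes through Lemma~\ref{lem-sisj} to produce a balanced OU-realization of $M$ first, whereas you invoke Proposition~\ref{prop-sy} directly to extract $U(B')=M$ from $N(B')=2M$, which is a slightly cleaner bookkeeping of the same ideas.
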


\begin{proof}
For the pure $n$-braid diagram $B$ in the proof of Lemma \ref{lem-sisj} with $U(B)=M$, apply crossing changes if necessary to obtain a positive braid diagram $B'$. 
We note that $B'$ is still a pure $n$-braid diagram such that each pair of strands $s_i$ and $s_j$ has the same number of crossings such that $s_i$ is over $s_j$ and $s_j$ is over $s_i$. 
Then $C(B')=U(B')=U(B)=M$ by Proposition \ref{prop-OU-C} and Lemma \ref{lem-sisj}. 
\end{proof}

\noindent By Corollaries \ref{cor-5ppp} and \ref{cor-5pp}, we have the following corollary. 

\begin{corollary}
Conjecture \ref{conj-C} is true when $n\leq 5$. 
\label{cor-C1}
\end{corollary}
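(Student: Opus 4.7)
The plan is to derive Conjecture \ref{conj-C} for $n \leq 5$ as a direct consequence of Theorem \ref{thm-OU5} together with the identification $C(B) = U(B)$ on positive braid diagrams (Proposition \ref{prop-OU-C}). The argument splits cleanly into the two implications, which correspond precisely to Corollaries \ref{cor-5ppp} and \ref{cor-5pp}; combining them gives Corollary \ref{cor-C1}.

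For the necessary direction, I would take a positive pure $n$-braid $b$ represented by a positive pure braid diagram $B$ and observe that $C(b) = C(B) = U(B)$ by Proposition \ref{prop-OU-C}. Non-negativity of the entries of $U(B)$ is immediate from the definition, symmetry is Proposition \ref{prop-sy}, and T0 is obtained by applying Proposition \ref{prop-adequate} to the CN matrix $N(B) = U(B) + U(B)^T$ and then using Lemma \ref{lem-symT0} to transfer the T0 property from $U(B) + U(B)^T$ back to the symmetric matrix $U(B)$ itself.

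For the sufficient direction, given a non-negative integer T0 symmetric $n \times n$ matrix $M$ with $n \leq 5$, I would first note that $M + M^T = 2M$ is a non-negative even T0 matrix (T0 via Lemma \ref{lem-symT0}) and apply Theorem \ref{thm-OU5} to obtain a pure $n$-braid diagram $B$ with $U(B) = M$. Because $M$ is symmetric, the counts of $s_i$-over-$s_j$ and $s_j$-over-$s_i$ crossings are balanced for each pair of strands; exploiting this balance, I would apply crossing changes to $B$ to produce a positive pure braid diagram $B'$ whose OU matrix is still $M$. Then $C([B']) = U(B') = M$ by Proposition \ref{prop-OU-C}, so $M$ is C-realizable by a positive pure braid.

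The real content of this otherwise short synthesis is entirely packaged inside Theorem \ref{thm-OU5}, whose proof rests on the combinatorial classification of CN matrices for $n \leq 5$ developed in Sections \ref{section-02pure}--\ref{section-5b}. Once Theorem \ref{thm-OU5} is in hand, Corollary \ref{cor-C1} follows formally, and the only non-trivial remaining step is the crossing-change argument that replaces a pure diagram with a positive pure one without altering its symmetric OU matrix.
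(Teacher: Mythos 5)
Your proposal is correct and follows essentially the same route as the paper: the paper also splits the statement into the two implications (its Corollaries \ref{cor-5ppp} and \ref{cor-5pp}), uses $C(B)=U(B)$ for positive diagrams, invokes Theorem \ref{thm-OU5} (via Lemma \ref{lem-sisj}) for realizability, and finishes with the same crossing-change argument, justified by the symmetry of the OU matrix of a positive pure diagram. The only cosmetic difference is that you cite Proposition \ref{prop-sy} and Lemma \ref{lem-symT0} directly where the paper packages these steps inside Lemma \ref{lem-sisj}.
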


\subsection{OU matrix and CN matrix for a general braid type}
\label{su-o}

A {\it simple braid} (or a {\it permutation braid}) is a braid that has a {\it simple diagram}, that is, each crossing is positive and each pair of strands has at most one crossing, which has been used in studies of the normal form of braids (see \cite{El, Ga, ThB}). 
Let $\Sigma_n$ be the set of simple $n$-braids. 
There is one-to-one correspondence from $\Sigma_n$ to $S_n=Sym\{1,\dots ,n\}$ by taking their braid permutations $\rho$. 
Let $X_n$ be the set of $n \times n$ zero-diagonal integer matrices. 
As explained in \cite{Bu, Gu}, using Lemma 9.1.6 in \cite{ThB} directly, we have the following lemma. 

\begin{lemma}
The map $C|_{\Sigma_n}:\Sigma_n \to X_n$ is injective and its image $C(\Sigma_n)$ is the set of $L\in X_n$ satisfying the following conditions: 
\begin{itemize}
\item[(i)] $L(i,j)=0$ if $i\geq j$, 
\item[(ii)] $L(i,j)\in \{0,1\}$, 
\item[(iii)] $L$ is both T0 and T1, i.e., if whenever $1\leq i<j<k\leq n$, then $L(i,j)=L(j,k)=p$ implies $L(i,k)=p$ for $p=0,1$.
\end{itemize}
\label{lem-iii}
\end{lemma}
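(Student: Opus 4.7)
The plan is to combine the bijection $\Sigma_n \leftrightarrow S_n$ given by $\beta \mapsto \rho_\beta$ with a direct inversion-set analysis. A simple $n$-braid $\beta$ is determined by its braid permutation $\rho=\rho_\beta$, and by the definition of a simple diagram each pair $(i,j)$ with $i<j$ contributes exactly one positive crossing with $s_i$ over $s_j$ precisely when $(i,j)$ is an inversion of $\rho$, and no crossing otherwise. Thus
\[
C(\beta)(i,j)=\begin{cases} 1 & \text{if } i<j \text{ and } \rho(i)>\rho(j),\\ 0 & \text{otherwise.}\end{cases}
\]

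Injectivity of $C|_{\Sigma_n}$ is immediate, since the inversion set $\mathrm{Inv}(\rho)=\{(i,j):i<j,\ \rho(i)>\rho(j)\}$ can be read off the matrix and determines $\rho$ uniquely, hence $\beta$. For the forward containment $C(\Sigma_n)\subseteq\{L:\text{(i),(ii),(iii)}\}$, conditions (i) and (ii) are built into the formula above, and (iii) is the transitivity of the usual order on $\{\rho(1),\dots,\rho(n)\}$: T0 is the statement that $\rho(i)<\rho(j)<\rho(k)\Rightarrow\rho(i)<\rho(k)$, and T1 is the analogue with $>$.

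For the reverse containment, given $L\in X_n$ satisfying (i)--(iii), I would define a relation $\prec$ on $\{1,\dots,n\}$ by
\[
i\prec j \iff \bigl(i<j\ \text{and}\ L(i,j)=0\bigr)\ \text{or}\ \bigl(i>j\ \text{and}\ L(j,i)=1\bigr),
\]
and show that $\prec$ is a strict total order. Totality and irreflexivity are built in; the work is transitivity, which is a case analysis over the six orderings of a triple $\{i,j,k\}$. For $i<j<k$, this reduces directly to T0 when both hypotheses are $L(\cdot,\cdot)=0$ and to T1 when they are $L(\cdot,\cdot)=1$; the five other orderings reduce to T0, T1, or their contrapositives. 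The clean way to package this is to note that T0 and T1 together forbid exactly the two of the eight $0/1$-patterns of $(L(i,j),L(j,k),L(i,k))$---namely $(0,0,1)$ and $(1,1,0)$---that are not realized by any linear ordering of a three-element set, so the six permissible patterns are in bijection with the six linear orders on three elements. Once $\prec$ is a total order, the unique $\rho\in S_n$ with $\rho(i)<\rho(j)\iff i\prec j$ has $\mathrm{Inv}(\rho)=\{(i,j):i<j,\ L(i,j)=1\}$, and the simple braid $\beta_\rho$ satisfies $C(\beta_\rho)=L$ by the formula above.

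The main obstacle is the transitivity verification for $\prec$: the bookkeeping over the six orderings of $\{i,j,k\}$ is routine but must be organized carefully to convert each case into an application of T0, T1, or their contrapositives. Invoking Lemma 9.1.6 of \cite{ThB} short-circuits this bookkeeping by packaging the same inversion-set characterization in the language of permutation braids.
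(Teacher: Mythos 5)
Your proof is correct, and it is genuinely more self-contained than what the paper offers: the paper does not prove this lemma at all, but simply invokes Lemma 9.1.6 of \cite{ThB} (following \cite{Bu, Gu}), whereas you unpack that result into an explicit inversion-set argument. Your identification $C(\beta)(i,j)=1$ iff $i<j$ and $\rho(i)>\rho(j)$, the observation that T0 and T1 forbid exactly the two patterns $(0,0,1)$ and $(1,1,0)$ among the eight $0/1$-patterns on a triple --- precisely the two not realized by any linear order on three elements --- and the reconstruction of a strict total order $\prec$ (hence a permutation, hence a unique simple braid) from any $L$ satisfying (i)--(iii) together constitute a complete proof; since transitivity of an irreflexive total relation need only be checked on triples, your six-pattern count does discharge the case bookkeeping, and totality follows from condition (ii). What the citation buys is brevity and alignment with the normal-form literature; what your argument buys is that the over/under convention is made explicit and checkable. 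That is worth something here: your convention (for $i<j$, a crossing of $s_i$ and $s_j$ in a simple diagram has $s_i$ over $s_j$) is the one forced by condition (i) of the lemma (strict upper triangularity), whereas the sentence the paper places immediately after the lemma asserts the opposite ("$s_i$ passes under $s_j$"), which would make $C(B_0)$ strictly lower triangular. Your self-contained route thus surfaces a convention discrepancy in the surrounding text that the bare citation hides.
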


\noindent We call a square matrix satisfying the conditions (i)-(iii) in Lemma \ref{lem-iii} a {\it simple matrix}.
Take a simple diagram $B_0$ of $b_0\in \Sigma_n$. 
If a pair of strands $s_i$ and $s_j \ (i < j)$ of $B_0$ has a crossing, then $s_i$ passes under $s_j$, and equivalently $s_j$ passes over $s_i$ at the crossing. 
Thus, $U(B_0)=C(B_0)(=C(b_0))$ for any simple diagram $B_0$ of any $b_0\in \Sigma_n$. 
We note that $N(B_0)=U(B_0)+U(B_0)^T$ and $N(i,j)=U(i,j)$ for any $i, j$. 
By Lemma \ref{lem-iii}, we have the following lemma. 

\begin{lemma}
The map $N|_{\Sigma_n}:\Sigma_n \to X_n$ is injective and its image $N(\Sigma_n)$ is the set of $L\in X_n$ satisfying the following conditions:
\begin{itemize}
\item[(I)] $L(i,j)=L(j,i)$ for any $i, j$, 
\item[(I\hspace{-0.5pt}I)] $L(i,j)\in \{0,1\}$, 
\item[(I\hspace{-0.5pt}I\hspace{-0.5pt}I)] $L$ is T0 and T1.
\end{itemize}
\label{lem-iiii}
\end{lemma}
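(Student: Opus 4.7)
The plan is to deduce Lemma~\ref{lem-iiii} directly from Lemma~\ref{lem-iii} by using the identity $N(B_0)=U(B_0)+U(B_0)^T = C(b_0)+C(b_0)^T$, which was already established in the paragraph immediately preceding the lemma. The whole proof is thus a bijection between ``upper-triangular simple matrices'' and ``symmetric $(0,1)$-matrices that are T0 and T1'', obtained by symmetrizing or, conversely, by taking the upper-triangular half.

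First I would handle injectivity. Given $b_0, b_0' \in \Sigma_n$ with $N(b_0)=N(b_0')$, pick simple diagrams $B_0, B_0'$ and set $L := N(B_0) = N(B_0')$. Because $U(B_0)=C(b_0)$ is supported on strict upper-triangular entries (Lemma~\ref{lem-iii}(i)), we can read off $C(b_0)(i,j)=L(i,j)$ for $i<j$ and $C(b_0)(i,j)=0$ otherwise, and similarly for $b_0'$. Hence $C(b_0)=C(b_0')$, and injectivity of $C|_{\Sigma_n}$ from Lemma~\ref{lem-iii} gives $b_0=b_0'$.

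Next I would prove the image characterization by containment in both directions. For the forward direction, let $b_0\in \Sigma_n$ and set $L_U := C(b_0)$, so $L := N(B_0) = L_U + L_U^T$. Property~(I) is immediate from this sum form; property~(II) follows because, for $i\ne j$, exactly one of $L_U(i,j)$ and $L_U(j,i)$ is zero and the other lies in $\{0,1\}$ by Lemma~\ref{lem-iii}(i)--(ii); property~(III) then holds because the T0 and T1 conditions only test indices $1\le i<j<k\le n$, where $L(i,j)=L_U(i,j)$, $L(j,k)=L_U(j,k)$, $L(i,k)=L_U(i,k)$, so they transfer verbatim from $L_U$ (which is T0 and T1 by Lemma~\ref{lem-iii}(iii)) to $L$. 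For the reverse direction, given $L$ satisfying (I)--(III), define $L_U(i,j) := L(i,j)$ when $i<j$ and $L_U(i,j) := 0$ otherwise. Then (i)--(iii) of Lemma~\ref{lem-iii} hold for $L_U$ by exactly the same observation that T0 and T1 only constrain strictly ordered triples. Lemma~\ref{lem-iii} therefore produces $b_0\in \Sigma_n$ with $C(b_0)=L_U$; for any simple diagram $B_0$ of $b_0$ we get $N(B_0)=U(B_0)+U(B_0)^T = L_U + L_U^T = L$, using symmetry of $L$ and zero diagonal to check the equality entrywise.

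I do not expect a real obstacle: the content of the lemma is a straightforward repackaging of Lemma~\ref{lem-iii}, since the T0 and T1 conditions are blind to whether a matrix is upper-triangular or its symmetrization. The only point worth stating explicitly is that, for zero-diagonal matrices, symmetrization and ``erase the strictly lower-triangular part'' are inverse bijections between the symmetric $(0,1)$-matrices and the $(0,1)$-matrices satisfying (i) of Lemma~\ref{lem-iii}, and that both the T0 and T1 conditions are preserved under this bijection.
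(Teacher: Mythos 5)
Your proof is correct and follows the same route the paper intends: the paper derives Lemma~\ref{lem-iiii} from Lemma~\ref{lem-iii} via the identity $N(B_0)=U(B_0)+U(B_0)^T=C(b_0)+C(b_0)^T$, leaving the details implicit, and you have simply spelled out the symmetrization bijection and the observation that T0 and T1 only constrain triples $i<j<k$. No gaps.
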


\noindent We call a matrix satisfying the conditions (I)-(I\hspace{-0.5pt}I\hspace{-0.5pt}I) in Lemma \ref{lem-iiii} a {\it double simple matrix}. 
We also remark that $C(\Sigma_n) \subset X_n^{+}$ and $C(\Sigma_n) \subset X_n^{+}$, where $X_n^{+}$ is the set of matrices $M\in X_n$ such that $M(i,j) \geq 0$ for any $i, j$. 
In conclusion, we have the following corollary. 

\begin{corollary}
Let $n\leq 5$. 
For each braid $b\in B_n$,
\begin{itemize}
\item[(1)] there exists a braid diagram $B$ of $b$ such that $U(B)=M_1+L_1$ for some $M_1, L_1\in X_n^{+}$ such that ${M_1}+M_1^T$ is non-negative even T0 and $L_1$ is simple.
\item[(2)] There exists a braid diagram $B$ of $b$ such that $N(B)=M_2+L_2$ for some $M_2, L_2\in X_n^{+}$ such that $M_2$ is non-negative even T0 symmetric and $L_2$ is double simple.
\end{itemize}
\end{corollary}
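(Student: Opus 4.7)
The plan is to decompose $b$ as the braid product of a pure braid and a simple braid and then to invoke additivity of $U$ and $N$ across a pure left factor.

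Using the bijection $\Sigma_n \leftrightarrow S_n$ recorded above Lemma \ref{lem-iii}, I would choose the simple braid $s_0 \in \Sigma_n$ whose braid permutation equals $\rho_b$, and fix its simple diagram $S_0$. Setting $p := b\, s_0^{-1} \in B_n$, the braid permutation of $p$ is $\rho_b \rho_{s_0}^{-1} = id$, so $p$ is pure; pick any braid diagram $P$ of $p$ and form the product diagram $B := P S_0$, which represents $b = p\, s_0$.

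Because $P$ is pure, Proposition \ref{prop-BC-OU} (at $\pi = id$) yields $U(B) = U(P) + U(S_0)$, and Proposition \ref{prop-N-sum} yields $N(B) = N(P) + N(S_0)$. For (1) I would take $M_1 := U(P)$ and $L_1 := U(S_0)$: then $M_1 + M_1^T = N(P)$, which is non-negative and even by Proposition \ref{prop-even-pure} and T0 by Proposition \ref{prop-adequate}, while $L_1 = C(S_0) = C(s_0)$ by Proposition \ref{prop-OU-C} (since $S_0$ is positive), and this is simple by Lemma \ref{lem-iii}. For (2) I would take $M_2 := N(P)$ and $L_2 := N(S_0)$: the matrix $M_2$ is symmetric, non-negative, even, and T0 for the same reasons, while $L_2 \in N(\Sigma_n)$ is double simple by Lemma \ref{lem-iiii}.

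I do not foresee a serious obstacle: the whole argument reduces to the decomposition $b = p\, s_0$ together with additivity of $U$ and $N$ for a pure left factor, both of which are already in hand. The only point to verify is the identification of $U(S_0)$ and $N(S_0)$ with the simple and double simple matrices of Lemmas \ref{lem-iii} and \ref{lem-iiii}, but this is immediate from those lemmas together with the equality $U(S_0) = C(S_0)$ provided by Proposition \ref{prop-OU-C}. In fact, the hypothesis $n \leq 5$ is not needed for this existence statement: only the forward implications of Propositions \ref{prop-even-pure} and \ref{prop-adequate}, which hold for every $n$, are used.
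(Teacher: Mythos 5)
Your proposal is correct and follows essentially the same route as the paper: factor $b$ as a pure braid times the permutation braid of $\rho_b$, take the product diagram, and apply the additivity of $U$ and $N$ over a pure left factor together with Lemmas \ref{lem-iii} and \ref{lem-iiii}. Your closing observation is also accurate --- the paper cites Theorem \ref{thm-OU5} for the T0/evenness of $M_1+M_1^T$, but only the forward direction (Propositions \ref{prop-even-pure} and \ref{prop-adequate}) is actually used, so the hypothesis $n\leq 5$ is indeed not needed for this existence statement.
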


\begin{proof}
Let $\rho_b\in S_n$ be the braid permutation of $b$ and let $\rho_b^{+}\in \Sigma_n$ be the permutation $n$-braid corresponding to $\rho_b$. 
Let $a=b(\rho_b^{+})^{-1}\in B_n$. 
It is easily seen that $a\in P_n$. 
Take a pure braid diagram $A$ of $a$ and take a simple diagram $B_0$ of $\rho_b^{+}$. 
Let $L_1=U(B_0),\ L_2=N(B_0)(={L_1}+L_1^T), M_1=U(A)$ and $M_2=N(A)(=M_1+M_1^T)$. 
Then, $U(B)=M_1+L_1$ by Proposition \ref{prop-BC-OU} and $N(B)=M_2+L_2$ by Proposition \ref{prop-N-sum}. 
We can see that $M_1+M_1^T$ is non-negative even T0 by Theorem \ref{thm-OU5} and $M_2$ is non-negative even T0 symmetric. 
We can also see that $L_1$ is simple by Lemma \ref{lem-iii} and $L_2$ is double simple by Lemma \ref{lem-iiii}. 
\end{proof}

\section*{Acknowledgment}
The work of the first author was partially supported by the JSPS KAKENHI Grant Number JP21K03263. 
The work of the second author was partially supported by the JSPS KAKENHI Grant Number JP19K03508.

\end{document}